%
%
%
%
%
%
\documentclass{amsart}
\usepackage{latexsym,amsxtra,amscd,ifthen}
\usepackage{amsfonts}
\usepackage{verbatim}
\usepackage{amsmath}
\usepackage{amsthm}
\usepackage{amssymb}

\numberwithin{equation}{section}

\theoremstyle{plain}
\newtheorem{theorem}{Theorem}[section]
\newtheorem{lemma}[theorem]{Lemma}
\newtheorem{proposition}[theorem]{Proposition}
\newtheorem{corollary}[theorem]{Corollary}

\theoremstyle{definition}
\newtheorem{definition}[theorem]{Definition}
\newtheorem{example}[theorem]{Example}
\newtheorem{convention}[theorem]{Convention}
\newtheorem{remark}[theorem]{Remark}
\newtheorem{question}[theorem]{Question}

\makeatletter              
\let\c@equation\c@theorem 

\makeatother

\DeclareMathOperator{\gldim}{gldim}

\DeclareMathOperator{\Ext}{Ext}
\DeclareMathOperator{\Tor}{Tor}
\DeclareMathOperator{\pdim}{pdim}

\DeclareMathOperator{\gr}{gr}

\DeclareMathOperator{\grmod}{grmod}

\DeclareMathOperator{\GKdim}{GKdim}
\DeclareMathOperator{\End}{End}

\DeclareMathOperator{\Hom}{Hom}
\DeclareMathOperator{\RHom}{RHom}

\newcommand{\newdeg}{\deg'}

\begin{document}

\title{Double Ore Extensions}

\author{James J. Zhang and Jun Zhang}

\address{Department of Mathematics, Box 354350,
University of Washington, Seattle, WA 98195, USA}

\email{(James J. Zhang) zhang@math.washington.edu}
\email{(Jun Zhang) junz@math.washington.edu}

\begin{abstract}
A double Ore extension is a natural generalization of the 
Ore extension. We prove that a connected graded double 
Ore extension of an Artin-Schelter regular algebra is 
Artin-Schelter regular. Some other basic properties such 
as the determinant of the DE-data are studied. Using the 
double Ore extension, we construct 26 families of 
Artin-Schelter regular algebras of global dimension four 
in a sequel paper.
\end{abstract}

\subjclass[2000]{16S36, 16W50, 16A62}


\keywords{Ore extension, double Ore extension, Artin-Schelter 
regular}

\maketitle


\setcounter{section}{-1}
\section{Introduction}
\label{xxsec0}

In 1933 Ore introduced and studied a version of noncommutative 
polynomial ring \cite{Or} which has become one of most basic 
and useful constructions in ring theory. Such a polynomial ring 
is now called {\it Ore extension} \cite[p.37]{MR}. Given an 
algebra $A$, an algebra automorphism $\sigma$ of $A$ and a
$\sigma$-derivation $\delta$ of $A$, the Ore extension of $A$ 
associated to $(\sigma,\delta)$ is obtained by adding a single 
generator $y$ to $A$ subject to the relation 
$$y r=\sigma(r) y+\delta(r)$$
for all $r\in A$. This Ore extension of $A$ is denoted by 
$A[y;\sigma,\delta]$. In this paper we introduce a new 
construction that is obtained by adding two generators $y_1$ and 
$y_2$ simultaneously to $A$. This construction is a natural 
generalization of the Ore extension; and for this reason, it is 
called a {\it double Ore extension} or just {\it double extension}
for short. Note that a double extension is usually different from
an iterated Ore extension that is obtained by forming the Ore 
extension twice (see the algebra in Example \ref{xxex4.2} and 
Proposition \ref{xxprop0.5}(c)). A double extension of $A$ is denoted 
by $A_P[y_1,y_2;\sigma,\delta,\tau]$ and the meanings of 
DE-data $\{P,\sigma,\delta,\tau\}$ will be explained 
soon. Some ideas related to double extensions were used by Patrick 
\cite{Pa} and Nyman \cite{Ny}, however, we have not found 
any literature devoted to a systematic study of this construction.
Our main interest on double extension is to construct more 
Artin-Schelter regular algebras of global dimension four \cite{ZZ}.
The double extension seems quite similar to the Ore extension. 
So we wish to extend some basic properties of 
Ore extensions to double extensions.. Surprisingly, many 
techniques used for Ore extensions are invalid for double extensions. 
For example, we have not been able to prove that a double extension 
of a noetherian ring is noetherian. Many new and complicated 
constraints have to be posted in constructing a double extension. 
General ring-theoretic properties of double extensions are not 
known (see questions in Section 4).

The motivation for this paper is to construct new Artin-Schelter 
regular algebras. It is very important in noncommutative algebraic 
geometry to classify all Artin-Schelter regular algebras of global 
dimension four that are the homogeneous coordinate rings of noncommutative
projective 3-spaces. There has been extensive research in this topic; 
and many non-isomorphic families of regular algebras have been 
discovered in recent years 
\cite{LPWZ, Sk1, Sk2, SS, VV1, VV2, VVW, Va1, Va2}. 
In a sequel paper \cite{ZZ} we use the double extension to 
construct 26 families of Artin-Schelter regular algebras of 
dimension four. One of the main results in \cite{ZZ} is

\begin{theorem}
\cite[Theorem 0.1(a)]{ZZ}
\label{xxthm0.1}
Let $B$ be an Artin-Schelter regular domain of global dimension four
that is generated by four elements of degree 1. If $B$ is a double
extension, then it is strongly noetherian, Auslander regular and 
Cohen-Macaulay.
\end{theorem}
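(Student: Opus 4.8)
The plan is to reduce the three conclusions to properties of the base algebra $A$ of the double extension $B=A_P[y_1,y_2;\sigma,\delta,\tau]$, together with ascent theorems along Ore-type extensions. By construction $B$ is free as a left $A$-module on the monomials $\{y_1^iy_2^j\}$, so its Hilbert series factors as $H_B(t)=H_A(t)\,(1-t)^{-2}$. Since $B$ is Artin–Schelter regular of global dimension four and generated by four degree-one elements, it is a domain with $H_B(t)=(1-t)^{-4}$, whence $H_A(t)=(1-t)^{-2}$. Thus $A$ is a connected graded domain generated by two degree-one elements with a single quadratic relation and $\GKdim A=2$; by the classification of such algebras, $A$ is a dimension-two AS regular algebra, namely a quantum plane $k_q[x_1,x_2]$ or the Jordan plane. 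Both are Ore extensions of $k[t]$, hence are strongly noetherian, Auslander regular, and Cohen–Macaulay. This settles the base case.

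Next I would try to present $B$, or a suitable associated graded ring $\gr B$, as an iterated Ore extension of $A$. The obstruction to $B$ itself being an iterated Ore extension is the term $p_{12}y_2^2$ in the defining relation $y_2y_1=p_{11}y_1y_2+p_{12}y_2^2+\tau_1y_1+\tau_2y_2$. The idea is to perform a linear change of the variables $y_1,y_2$ over $A$—governed by the eigenvalues of the datum $P$ and the determinant of the DE-data studied in this paper—so as to diagonalize the quadratic part. When $P$ is diagonalizable the relation becomes $y_2'y_1'\equiv\lambda\,y_1'y_2'$ modulo lower-order terms, exhibiting $B$ as an iterated Ore extension $A[y_1';\sigma_1][y_2';\sigma_2,\delta_2]$. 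The non-diagonalizable (Jordan) case must be handled separately, typically by passing to $\gr B$ for the filtration by $y$-degree, which degenerates the relation to a genuinely iterated Ore form.

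With $B$ (or $\gr B$) exhibited as an iterated Ore extension of $A$, the three conclusions follow from standard transfer results. Strong noetherianity ascends along Ore extensions by the theorem of Artin–Small–Zhang, so $\gr B$ is strongly noetherian, and strong noetherianity lifts from the associated graded ring of a good filtration to $B$ itself. Auslander regularity and the Cohen–Macaulay property ascend along Ore extensions by the results of Levasseur and Ekström, and likewise lift from $\gr B$ to $B$; combined with the Auslander-regular, Cohen–Macaulay base $A$, this yields the remaining two conclusions. The AS-regularity of $B$ needed to invoke these homological transfers is already guaranteed by the main theorem of the present paper.

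The main obstacle is the strong noetherianity, precisely because—as stressed in the introduction—even plain noetherianity of a general double extension is open, so one cannot argue purely abstractly and must exploit the rigidity forced by the hypotheses. Concretely, the delicate points are (i) producing a change of variables or filtration that reduces $B$ to iterated Ore form \emph{compatibly} with the $\sigma$- and $\delta$-data, including the troublesome Jordan-block case where naive diagonalization of $P$ fails, and (ii) verifying that strong noetherianity genuinely transfers from $\gr B$ back to $B$ under the chosen filtration. Once these are in place, the Auslander-regular and Cohen–Macaulay conclusions are routine ascents from the explicitly known dimension-two base.
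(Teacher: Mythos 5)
Your proposal has a genuine gap at its central step. First, a small but telling slip: relation (R1) reads $y_2y_1=p_{12}y_1y_2+p_{11}y_1^2+\tau_1y_1+\tau_2y_2+\tau_0$; there is no $y_2^2$ term, and the quadratic relation between $y_1$ and $y_2$ is not what obstructs $B$ from being an iterated Ore extension. The real obstruction is the off-diagonal part of $\sigma\colon A\to M_2(A)$: by (R2), $y_1r=\sigma_{11}(r)y_1+\sigma_{12}(r)y_2+\delta_1(r)$, so the subalgebra generated by $A$ and a single degree-one combination of $y_1,y_2$ is an Ore extension of $A$ only if the correspondingly conjugated $\sigma$ becomes triangular. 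This cannot be arranged in general: the algebra $B(h)$ of Example \ref{xxex4.2} satisfies every hypothesis of the theorem, yet Lemma \ref{xxlem4.10} shows it contains no graded subalgebra with Hilbert series $(1-t)^{-3}$, so no linear change of variables exhibits it as an Ore extension of a three-dimensional regular algebra, and Lemma \ref{xxlem4.9} rules out the normal-extension route as well (this is Proposition \ref{xxprop0.5}(c)). Passing to an associated graded ring does not help: by Lemma \ref{xxlem3.4} the natural filtration yields the trimmed double extension $A_P[y_1,y_2;\sigma]$, which carries the same matrix $\sigma$ and is equally far from iterated Ore form. So the reduction on which your entire ascent argument for strong noetherianity, Auslander regularity and the Cohen--Macaulay property rests is unavailable.

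Note also that this statement is quoted from the sequel \cite{ZZ} and is not proved in the present paper; its proof there goes through the classification of the double extensions in question. The one instance treated here, $B(h)$ in Corollary \ref{xxcor4.7}, is handled by an entirely different mechanism: one exhibits the normal elements $x_1x_2$, $x_1^2+x_2^2$, $y_1y_2$, $y_1^2+y_2^2$, observes that the quotient by them is finite dimensional, and then invokes \cite{ASZ} for strong noetherianity and \cite{Zh} for the Auslander and Cohen--Macaulay properties via ``enough normal elements.'' Your base-case analysis of $A$ (a quantum or Jordan plane) is fine, but without a valid mechanism for ascending from $A$ to $B$ the proof does not go through.
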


In this paper we study some basic ring-theoretic and homological 
properties of double extensions so we can focus on the other issues 
such as classification in the sequel \cite{ZZ}. As mentioned earlier
double extensions seem much more difficult to study than Ore 
extensions. One result we can prove is the following.

\begin{theorem}
\label{xxthm0.2} 
Let $A$ be an Artin-Schelter regular algebra. If $B$ is a connected 
graded double extension of $A$, then $B$ is Artin-Schelter regular
and $\gldim B=\gldim A+2$. 
\end{theorem}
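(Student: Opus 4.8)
The plan is to verify the three defining conditions of Artin--Schelter regularity for $B$: finite global dimension, finite Gelfand--Kirillov dimension, and the Gorenstein condition $\uExt^i_B(k,B)=0$ for $i\neq d'$ and $\cong k(\ell)$ for $i=d'$, where $d'=\gldim A+2$. The first step is to record the PBW-type structure of a double extension: $B$ is free both as a left and as a right $A$-module on the monomial basis $\{y_1^iy_2^j:i,j\ge 0\}$, so the Hilbert series factors as $H_B(t)=H_A(t)\,(1-t^{\ell})^{-2}$ with $\ell=\deg y_i$. This immediately gives $\GKdim B=\GKdim A+2<\infty$, disposing of the GK-dimension condition, and it also shows that $B\otimes_A-$ is exact, which is what makes the later resolution arguments run. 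The remaining work is the homological computation of $\gldim B$ and the Gorenstein property.

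First I would reduce to the \emph{trivial} double extension, in which $\delta=0$ and the lower-order part $\tau$ of the relation $y_2y_1=p_{12}y_1y_2+p_{11}y_1^2+(\text{lower $y$-order})$ is discarded. Filter $B$ by $y$-degree, $F_nB=\sum_{i+j\le n}Ay_1^iy_2^j$; since moving an element of $A$ past a $y_i$ introduces the strictly $y$-lower term $\delta_i$, and since $\tau$ contributes only in lower $y$-order, the associated graded $\gr B$ is exactly the trivial double extension $C:=A_P[y_1,y_2;\sigma]$. A standard filtered-to-graded lifting argument then yields $\gldim B\le\gldim\gr B$ and transports the Gorenstein condition from $\gr B$ to $B$, so it suffices to treat $C$. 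This lifting is purely homological and does not require knowing that $B$ is noetherian, which is fortunate since noetherianity of double extensions is not available.

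For the trivial extension $C$ there is an algebra surjection $C\twoheadrightarrow A$ sending $y_i\mapsto 0$, and the heart of the proof is to resolve $A$ as a left $C$-module by the length-$2$ complex
\[
0\to C(-2\ell)\xrightarrow{\ d_2\ } C(-\ell)^{\oplus 2}\xrightarrow{\ d_1\ } C\to A\to 0,
\]
where $d_1$ sends the two basis generators to $y_1$ and $y_2$ and $d_2$ encodes the single relation $y_2y_1-p_{12}y_1y_2-p_{11}y_1^2$. Exactness is checked directly against the PBW basis $\{y_1^iy_2^j\}$, using $p_{12}\neq 0$; this is the two-dimensional Koszul complex of the quantum plane $k_P[y_1,y_2]$ ``twisted'' over $A$ through $\sigma$, and it expresses $\pdim_C A=2$. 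Splicing this complex with the pullback $C\otimes_A F_\bullet$ of a minimal free resolution $F_\bullet\to k_A$ of length $d=\gldim A$, by a total-complex construction that merges the two families of differentials, produces a free resolution of $k_C$ of length $d+2$. Since all differentials have entries in the augmentation ideal, this resolution is minimal, giving $\gldim C=d+2$ together with the matching lower bound.

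Finally, for the Gorenstein condition I would apply $\uHom_C(-,C)$ to this minimal resolution. The $F_\bullet$-part dualizes, via the Gorenstein property of $A$, to the minimal resolution of $k$ as a right module up to a degree shift, while the length-$2$ quantum-plane piece is self-dual up to a shift governed by the determinant of the DE-data; combining the two shifts yields $\uExt^i_C(k,C)=0$ for $i\neq d+2$ and $\cong k(\ell')$ for $i=d+2$. The main obstacle is the middle step: verifying exactness and, above all, minimality of the merged complex in the twisted setting. Because the right $A$-module structure on the Koszul terms is twisted by $\sigma$, the two families of differentials interact nontrivially, so the total differential must be assembled with $\sigma$-corrections, and one must track this $\sigma$-twist (and its determinant) carefully in order to identify the correct Gorenstein shift.
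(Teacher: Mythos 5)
Your treatment of the trimmed extension $C=A_P[y_1,y_2;\sigma]$ is essentially the paper's argument in different clothing: the length-two complex $0\to C(-2\ell)\to C(-\ell)^{\oplus 2}\to C\to A\to 0$ is the paper's \eqref{E2.2.2}, the identification of its right $A$-module structure (twisted by $\sigma$ in the middle term and by $\det\sigma$ at the left end, verified using (R3.1) and (R3.3)) is Theorem \ref{xxthm2.2}, and your ``merge with $C\otimes_A F_\bullet$ and dualize'' is the explicit double-complex version of the paper's adjunction $\RHom_C(k,C)\cong\RHom_A\bigl(k,\RHom_C(A,C)\bigr)$ carried out in Lemma \ref{xxlem3.1} and Proposition \ref{xxprop3.2}. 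One remark there: once $f$ and $g$ are shown to be $(C,A)$-bimodule maps for the twisted structures, the double complex commutes on the nose, so no higher ``$\sigma$-corrections'' to the total differential are needed; the whole content is the bimodule verification, which is exactly where the constraints (R3) enter.

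The genuine gap is in your reduction from $B=A_P[y_1,y_2;\sigma,\delta,\tau]$ to the trimmed case. You assert that a ``standard filtered-to-graded lifting argument'' transports the Gorenstein condition from $\gr B$ to $B$ and that this ``does not require knowing that $B$ is noetherian.'' That is precisely the step that cannot be taken for granted: the usual transfer of Artin--Schelter regularity along a filtration (as for Ore extensions) uses noetherianity of $\gr B$ to control the filtration-induced spectral sequences computing $\Ext^i_B(k,B)$, and it is not known whether $A_P[y_1,y_2;\sigma]$ is noetherian even when $A$ is. The paper devotes Lemma \ref{xxlem3.5} and Theorem \ref{xxthm3.6} to circumventing this: one passes to the Rees ring $R_FB$, shows it is Artin--Schelter regular because $R_FB/(t)=\gr_FB$ is (via Proposition \ref{xxprop3.2} applied with $D=k[t]$ and Rees's lemma), proves that $\Tor^{R_FB}_i(k,N)$ is finite-dimensional for $N=\bigoplus_{n\geq 0}M$ so that $N$ admits a finite free resolution, and then recovers $\Ext^j_B(k,B)$ by inverting the central element $t$ (using $R_FB[t^{-1}]\cong B[t^{\pm1}]$ and \cite[Lemma 2.2]{YZ}). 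Without an argument of this kind, your verification of (AS2) for a general, non-trimmed double extension does not go through as stated; the rest of the proposal is sound.
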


To get a further idea about a double extension, let us explain 
the associated DE-data $\{P,\sigma,\delta,\tau\}$. Let $\sigma$ 
be an algebra homomorphism $A\to M_2(A)$ where $M_2(A)$ is the 
$2\times 2$-matrix algebra over $A$ and let $\delta$ be a 
$\sigma$-derivation from $A\to A^{\oplus 2}$, where
$A^{\oplus 2}$ is the $2\times 1$-matrix over $A$, satisfying 
$$\delta(rs)=\sigma(r)\delta(s)+\delta(r)s$$
for all $r,s\in A$. The multiplication $\sigma(r)\delta(s)$
makes sense because $A^{\oplus 2}$ is a left $M_2(A)$-module.
Two extra generators $y_1$ and $y_2$ satisfies a ``quadratic'' 
relation
$$y_2y_1=p_{12}y_1y_2+p_{11}y_1^2+\tau_{1}y_1+\tau_{2}y_2+\tau_{0}$$
where $p_{12},p_{11}\in k$ and $\tau_{1},\tau_{2},\tau_{0}\in A$. We 
call $P:=\{p_{12},p_{11}\}$ the parameter and $\tau:=\{\tau_{1},\tau_{2},
\tau_{0}\}$ the tail. There are other conditions relating 
$y_1,y_2$ with $\{P,\sigma,\delta,\tau\}$. For example, we need
$$
\begin{pmatrix} y_1 r\\y_2 r\end{pmatrix} =
\sigma(r)
\begin{pmatrix} y_1\\y_2\end{pmatrix}
+\delta(r)
$$
for all $r\in A$. A complete description of a double extension 
is given from Definition \ref{xxdefn1.3} to Convention 
\ref{xxcon1.6}. A shorter and more abstract definition of a 
connected graded double extension is given in \cite[Definition 1.1
and Lemma 1.4]{ZZ}.

As a consequence of Definition \ref{xxdefn1.3},  we have that 
$p_{12}\neq 0$ and that $\sigma$ is invertible in the sense of 
Definition \ref{xxdefn1.8} (note that the map $\sigma$ can not 
be invertible in the usual sense because it is not surjective). 
Without these extra conditions, $A_{P}[y_1,y_2;\sigma,\delta,\tau]$ 
is called a right double extension. When $\tau$ and $\delta$ are zero, 
the (right) double extension is denoted by $A_{P}[y_1,y_2;\sigma]$. 

The DE-data are useful; for example, we can define 
the determinant of $\sigma$ which plays an essential
role in proving Theorem \ref{xxthm0.2}.
Let $B=A_{P}[y_1,y_2;\sigma,\delta,\tau]$ be a (right) double 
extension. The {\it determinant} of $\sigma$ is defined to be
$$\det \sigma: r\mapsto -p_{11}\sigma_{12}(\sigma_{11}(r))
+\sigma_{22}(\sigma_{11}(r))-p_{12}\sigma_{12}(\sigma_{21}(r))$$
for all $r\in A$, where $\sigma(r)=
\begin{pmatrix} \sigma_{11}(r)&\sigma_{12}(r)\\
                 \sigma_{21}(r)&\sigma_{22}(r)
\end{pmatrix}$. This formula reminds us the quantum determinant
of a quantum $2\times 2$-matrix; and when $P=(1,0)$ it becomes
$$\det \sigma=\sigma_{22}\circ \sigma_{11}-\sigma_{12}\circ
\sigma_{21}$$
which agrees with the usual determinant in linear
algebra (but $\det \sigma\neq \sigma_{11}\circ \sigma_{22}-
\sigma_{12}\circ\sigma_{21}$ in general). The following result 
characterizes the invertibility of $\sigma$.

\begin{proposition}[Proposition \ref{xxprop2.1}]
\label{xxprop0.3} Let $B=A_{P}[y_1,y_2;\sigma,\delta,\tau]$ be
a right double extension.
\begin{enumerate}
\item
$\det \sigma$ is an algebra endomorphism of $A$.
\item
Suppose that $p_{12}\neq 0$ and that $B$ is a connected graded 
right double extension. Then $\det \sigma$ is invertible if and 
only if $\sigma$ is invertible in the sense of Definition 
\ref{xxdefn1.8}.
\end{enumerate}
\end{proposition}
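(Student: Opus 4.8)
The plan is to handle the two parts by different means: part (a) is a direct check from the defining relations, whereas part (b) reduces, in the connected graded case, to finite-dimensional linear algebra on each graded piece. For (a), $\det\sigma$ is $k$-linear because each entry $\sigma_{ij}$ is, and $\det\sigma(1)=1$ since $\sigma(1)$ is the identity matrix (so $\sigma_{11}(1)=\sigma_{22}(1)=1$ and $\sigma_{12}(1)=\sigma_{21}(1)=0$). The content is the multiplicativity $\det\sigma(rs)=\det\sigma(r)\det\sigma(s)$.

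First I would assemble the two families of relations among the $\sigma_{ij}$. Since $\sigma\colon A\to M_2(A)$ is an algebra homomorphism (Definition \ref{xxdefn1.3}), one has $\sigma_{ij}(rs)=\sum_{\ell}\sigma_{i\ell}(r)\sigma_{\ell j}(s)$. Moreover, expanding $y_2(y_1r)$ and $(y_2y_1)r$ with the defining relations and comparing the coefficients of $y_1^2$, $y_1y_2$, $y_2^2$ in the PBW basis yields a system of quantum-matrix relations tying $p_{11},p_{12}$ to the compositions $\sigma_{i\ell}\circ\sigma_{\ell'j}$. Substituting the homomorphism relations into $\det\sigma(rs)$ and regrouping by these quantum-matrix relations should collapse the expression to $\det\sigma(r)\det\sigma(s)$, exactly as the quantum determinant of a quantum $2\times2$ matrix is grouplike. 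The main obstacle here is purely one of bookkeeping: because the values $\sigma_{ij}(r)$ do not commute, every composition must be kept in order and the correct relation applied, and the real work is finding the grouping of the dozen-odd terms that closes up.

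For (b) I would first note that in the connected graded case each $\sigma_{ij}$ preserves degree, so $\det\sigma$ is a graded algebra endomorphism and $(\sigma_{ij})\in M_2(\End_k A)$ with each $\End_k A$ graded; both ``$\sigma$ invertible'' (Definition \ref{xxdefn1.8}) and ``$\det\sigma$ invertible'' may then be tested degree by degree on the finite-dimensional pieces $A_n$. The key input is a quantum adjugate: using $p_{12}\neq0$ to divide, I would write down $\Phi\in M_2(\End_k A)$ with entries built from $\sigma_{22}-p_{11}\sigma_{12}$, $p_{12}\sigma_{12}$, $\sigma_{11}$ and $\sigma_{21}$, and verify from the relations of (a) that $\Phi\,\sigma=\sigma\,\Phi=(\det\sigma)\,\id_{A^{\oplus2}}$ (for instance $(\Phi\sigma)_{12}=0$ is precisely the $y_2^2$-relation). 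Restricting to $A_n$ and taking ordinary determinants of operators on $A_n^{\oplus2}$ gives $\det_k(\Phi_n)\,\det_k(\sigma_n)=\det_k(\det\sigma|_{A_n})^2$. From this the implication ``$\det\sigma$ invertible $\Rightarrow\sigma$ invertible'' is immediate, since a nonzero right-hand side forces $\det_k(\sigma_n)\neq0$, whence $\sigma_n$ is bijective for every $n$.

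The reverse implication is the genuine obstacle, as the displayed equation alone gives no control over $\det_k(\Phi_n)$. To close the loop I would use that invertibility of $\sigma$ makes $B$ a left double extension whose $\sigma$-datum is $\sigma^{-1}=(\det\sigma)^{-1}\Phi$, and then establish multiplicativity of the determinant construction under matrix composition; combined with the fact that the identity datum has determinant $\id_A$, this yields $\det(\sigma^{-1})\circ\det\sigma=\id_A$, exhibiting a two-sided inverse of $\det\sigma$. Checking that $\sigma^{-1}$ is a legitimate datum and that the determinant is multiplicative under this composition is where the effort concentrates; with these in hand both implications hold and the characterization is complete.
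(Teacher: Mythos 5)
Your part (a) and the implication ``$\sigma$ invertible $\Rightarrow$ $\det\sigma$ invertible'' follow essentially the paper's route (Proposition \ref{xxprop2.1}(a,b)): the multiplicativity of $\det\sigma$ is exactly the bookkeeping computation with $\sigma_{ij}(rs)=\sum_{\ell}\sigma_{i\ell}(r)\sigma_{\ell j}(s)$ and the relations \eqref{R3.1}--\eqref{R3.3}, and for the reverse direction the paper likewise passes to the left-extension datum $\phi=\sigma^{-1}$, derives the mirror relations for $\phi$, and checks $\det\sigma\circ\det\phi=\det\phi\circ\det\sigma=\id_A$ by hand. Do note that ``multiplicativity of the determinant under matrix composition'' is not a general fact for these quantum determinants; it holds only because of the specific commutation relations satisfied by the pair $(\sigma,\phi)$, so the verification you defer is the actual content of that direction (and writing $\sigma^{-1}=(\det\sigma)^{-1}\Phi$ there is circular, since invertibility of $\det\sigma$ is what is being proved).

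The genuine gaps are in your forward direction, which is otherwise an attractive alternative to the paper's argument (the paper instead shows $Ay_1+Ay_2+A=y_1A+y_2A+A$, compares Hilbert series to get freeness on both sides, and invokes Lemma \ref{xxlem1.9}). First, the two-sided adjugate identity $\Phi\,\sigma=\sigma\,\Phi=(\det\sigma)\,\id$ is not justified: the relations \eqref{R3.1}--\eqref{R3.3} encode only the \emph{right} double extension structure and yield only a one-sided identity --- this is precisely why Proposition \ref{xxprop2.1}(c) concludes only that $\sigma$ has a \emph{right} inverse before the graded hypothesis is brought in. Your determinant count on graded pieces survives with the one-sided identity, so this is repairable, but as stated the claim is unsupported. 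Second, and more seriously, bijectivity of $\sigma$ on each finite-dimensional graded piece produces a two-sided inverse of $\sigma$ only as an element of $M_2(\End_k A)$, whereas Definition \ref{xxdefn1.8} requires the inverse $\phi$ to be an \emph{algebra homomorphism} $A\to M_2(A)$; nothing in your argument addresses this, and it does not follow from linear algebra alone. The paper closes exactly this gap through Lemma \ref{xxlem1.9}, where the homomorphism property of $\phi$ comes out of the two-sided freeness of $y_1A+y_2A+A$ and associativity in $B$; you should route your graded-piece bijectivity through that lemma. (A smaller point: if $\deg y_1\neq\deg y_2$ the entries $\sigma_{12},\sigma_{21}$ shift degree, so the finite-dimensional space on which you take $\det_k$ must be $A_n\oplus A_{n+\deg y_1-\deg y_2}$ rather than $A_n^{\oplus 2}$.)
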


The determinant $\det \sigma$ can also be recovered by the
cohomology.

\begin{theorem}[Theorem \ref{xxthm2.2}]
\label{xxthm0.4} Let $B=A_{P}[y_1,y_2;\sigma]$ be a double
extension. Identifying $A$ with the factor ring $B/(y_1,y_2)$. 
Then 
$$\Ext^i_B(A,B)=\begin{cases} {^{\det \sigma}A} &i=2\\
0&i\neq 2\end{cases}$$
where the left and right $A$-actions on the $A$-bimodule 
$^{\det\sigma}A$ is defined by
$$l*a*r=(\det\sigma (l))ar$$
for all $l,a,r\in A$.
\end{theorem}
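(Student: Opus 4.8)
The plan is to build an explicit length-two free resolution of the left $B$-module $A=B/(y_1,y_2)$, dualize it by applying $\Hom_B(-,B)$, and then read off the cohomology together with its $A$-bimodule structure.

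Since $B=A_{P}[y_1,y_2;\sigma]$ has $\delta=\tau=0$, it is a free left $A$-module on the monomials $\{y_1^iy_2^j\}$, whose positive-degree part is exactly $By_1+By_2$; hence $A\cong B/(By_1+By_2)$ as left $B$-modules. I would propose the Koszul-type complex
\[
0\longrightarrow B\xrightarrow{\ d_2\ } B^{\oplus 2}\xrightarrow{\ d_1\ } B\xrightarrow{\ \epsilon\ } A\longrightarrow 0,
\]
where $\epsilon$ is the augmentation, $d_1(b_1,b_2)=b_1y_1+b_2y_2$, and $d_2(b)=\bigl(b(y_2-p_{11}y_1),\,-b\,p_{12}y_1\bigr)$, the last map encoding the single quadratic relation $y_2y_1=p_{12}y_1y_2+p_{11}y_1^2$. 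The identity $d_1d_2=0$ is immediate from that relation. The first real task is to prove exactness: because all maps are graded and $B$ is $A$-free, this reduces to a normal-form (PBW) count, which amounts to saying that $\{y_1,y_2\}$ behaves like a length-two regular sequence in $B$ (alternatively one may invoke that $B$ is Artin--Schelter regular by Theorem \ref{xxthm0.2}).

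Next I would apply $\Hom_B(-,B)$ and use $\Hom_B(B,B)\cong B$, $\Hom_B(B^{\oplus2},B)\cong B^{\oplus2}$ as right $B$-modules, obtaining the cochain complex
\[
0\longrightarrow B\xrightarrow{\ d_1^{*}\ } B^{\oplus2}\xrightarrow{\ d_2^{*}\ } B\longrightarrow 0,
\]
with $d_1^{*}(b)=(y_1b,y_2b)$ and $d_2^{*}(b_1,b_2)=(y_2-p_{11}y_1)b_1-p_{12}y_1b_2$, both given by left multiplication, so that $\Ext^i_B(A,B)=H^i$. Here $H^0=0$ because the $y_i$ are not left zero-divisors; and since $p_{12}\neq0$ one gets $\im d_2^{*}=y_1B+y_2B$, whence $H^2=B/(y_1B+y_2B)\cong A$ as right $B$-modules, the right $B$-action factoring through $B\to A$ and restricting along $A\hookrightarrow B$ to ordinary right multiplication (this is the ``$r$'' in $l*a*r$). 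The remaining vanishing $H^1=0$ I would deduce from the same regular-sequence property: a pair with $y_2b_1=y_1(p_{11}b_1+p_{12}b_2)$ must lie in the image of $d_1^{*}$.

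Finally, to pin down the left $A$-action I would lift to the resolution the left-$B$-linear map $r_a\colon A\to A$, $x\mapsto xa$ (for $a\in A$), which carries the right $A$-structure of the source. A chain lift is $\tilde f_0(b)=ba$, then $\tilde f_1(e_i)=\sum_j\sigma_{ij}(a)e_j$ (forced by $y_ia=\sum_j\sigma_{ij}(a)y_j$), and finally $\tilde f_2=c\cdot(-)$ for some $c\in A$. Imposing $d_2\tilde f_2=\tilde f_1d_2$ and rewriting everything in normal form, the coefficient of $y_2$ in the $e_1$-component works out to be exactly $\sigma_{22}\sigma_{11}(a)-p_{11}\sigma_{12}\sigma_{11}(a)-p_{12}\sigma_{12}\sigma_{21}(a)=\det\sigma(a)$, forcing $c=\det\sigma(a)$. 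Hence on $H^2\cong A$ the induced left action of $a$ is multiplication by $\det\sigma(a)$, giving $l*a*r=(\det\sigma(l))ar$ as claimed (consistent with $\det\sigma$ being an algebra endomorphism, Proposition \ref{xxprop0.3}(a)). I expect the main obstacle to be twofold: establishing exactness of the resolution, and the matching $H^1=0$, over a general noncommutative base $A$ rather than a field, where one can only use $A$-freeness and normal forms; and carrying out the noncommutative bookkeeping in $\tilde f_2$ so that the remaining matched coefficients are consistent — this consistency being precisely what the defining relations among $\{\sigma_{ij}\}$ and $\{p_{11},p_{12}\}$ guarantee, and the mechanism by which $\det\sigma$ (rather than some other combination) appears.
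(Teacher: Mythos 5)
Your proposal is correct and follows essentially the same route as the paper: the same length-two free resolution \eqref{E2.2.2}, dualization by $\Hom_B(-,B)$, exactness of the dual complex from the left double extension (right normal form) property, and the identification of the left $A$-action with $\det\sigma$ --- your chain-lift computation forcing $\tilde f_2$ to be multiplication by $\det\sigma(a)$ is precisely the paper's verification, via (R3.1) and (R3.3), that $g$ is a right $A$-module map from $B^{\det\sigma}$ to $(B\oplus B)^\sigma$. One caution: your alternative of invoking Theorem \ref{xxthm0.2} to get exactness would be circular, since Theorem \ref{xxthm2.2} is a key step in the proof of that theorem, so you should rely on the PBW/normal-form argument, as the paper does.
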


Since the double extension is a new construction we feel that
it is worth to study some simple examples in some details.
Let $h$ be a nonzero scalar in $k$ and let $B(h)$ be the algebra
generated by four degree 1 elements $x_1,x_2,y_1,y_2$ and 
subject to the six quadratic relations
$$\begin{aligned}
x_2x_1&=-x_1x_2,\\
y_2y_1&=-y_1y_2,\\
y_1x_1&=hx_1y_1+hx_2y_1+hx_1y_2,\\
y_1x_2&=hx_1y_2 ,\\
y_2x_1&=hx_2y_1,\\
y_2x_2&=-hx_2y_1-hx_1y_2+hx_2y_2.
\end{aligned}
$$

\begin{proposition}
\label{xxprop0.5} Let $B(h)$ be the algebra defined as above.
\begin{enumerate}
\item
$B(h)$ is a double extension $A_{P}[y_1,y_2;\sigma]$ where 
$A=k\langle x_1,x_2\rangle/(x_1x_2+x_2x_1)$ and $P=(-1,0)$.
\item
It is an Artin-Schelter regular, Auslander regular, 
Cohen-Macaulay, Koszul and strongly noetherian domain.
\item
It is neither an Ore extension nor a normal extension
of a 3-dimensional Artin-Schelter regular algebra.
\item
$B(h)$ is PI if and only if $h$ is a root of unity if and 
only if the automorphism $\det\sigma$ has finite order.
\item
The quotient division ring of $B(h)$ is generated by 
two elements.
\end{enumerate}
\end{proposition}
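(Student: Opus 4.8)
The plan is to handle the five parts in order, extracting the DE-data first and then leaning on Theorem \ref{xxthm0.2} and (from the sequel) Theorem \ref{xxthm0.1} for the homological input, reserving explicit computation for the finer parts (c)--(e). For (a) I would read $\sigma$ off the four cross relations by matching $\left(\begin{smallmatrix} y_1 x_j\\ y_2 x_j\end{smallmatrix}\right)=\sigma(x_j)\left(\begin{smallmatrix} y_1\\ y_2\end{smallmatrix}\right)$, which gives $\sigma(x_1)=\begin{pmatrix} h(x_1+x_2) & hx_1\\ hx_2 & 0\end{pmatrix}$ and $\sigma(x_2)=\begin{pmatrix} 0 & hx_1\\ -hx_2 & h(x_2-x_1)\end{pmatrix}$, while $y_2y_1=-y_1y_2$ forces $P=(p_{12},p_{11})=(-1,0)$ and $\tau=\delta=0$. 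I would then verify the axioms of Definition \ref{xxdefn1.3}: that $\sigma\colon A\to M_2(A)$ is an algebra homomorphism (the only check being $\sigma(x_1x_2+x_2x_1)=0$) together with the short overlap/consistency conditions, which collapse since $\tau=\delta=0$. Since $p_{12}=-1\neq 0$ and $\det\sigma$ is invertible (computed below), Proposition \ref{xxprop0.3} shows $\sigma$ is invertible in the sense of Definition \ref{xxdefn1.8}, so $B(h)=A_P[y_1,y_2;\sigma]$ with $A$ the $(-1)$-quantum plane is a genuine double extension.

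For (b), $A$ is Artin-Schelter regular of global dimension $2$, so Theorem \ref{xxthm0.2} gives immediately that $B(h)$ is Artin-Schelter regular with $\gldim B(h)=2+2=4$. To obtain the remaining adjectives via Theorem \ref{xxthm0.1} I must first know $B(h)$ is a domain, so I would produce a PBW basis: with the degree-lexicographic order $y_2>y_1>x_2>x_1$ the six relations have leading terms $x_2x_1,\ y_2y_1,\ y_1x_1,\ y_1x_2,\ y_2x_1,\ y_2x_2$, and a Diamond-Lemma check on the overlap ambiguities ($y_2y_1x_i$, $y_jx_2x_1$, and the like) shows these form a Gr\"obner basis with normal monomials $x_1^ax_2^by_1^cy_2^d$. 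This yields the Hilbert series $H_{B(h)}(t)=(1-t)^{-4}$ and, since the Gr\"obner basis is quadratic, Koszulity (a connected graded algebra presented by a quadratic Gr\"obner basis is Koszul). Using that $B(h)$ is then a free left $A$-module on $\{y_1^iy_2^j\}$, I would establish the domain property by a filtration/twisting argument reducing to the fact that $A$ is a domain; this is the one genuinely delicate step, since, as the introduction stresses, domain and noetherian properties do not transfer automatically across double extensions (the leading-monomial algebra here has zero divisors, so the naive term-order filtration does not suffice). Once $B(h)$ is known to be an Artin-Schelter regular domain of global dimension $4$ that is a double extension generated by four degree-one elements, Theorem \ref{xxthm0.1} delivers strongly noetherian, Auslander regular and Cohen-Macaulay.

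For (c) I would argue by contradiction, and I would first observe that a graded normal extension of a $3$-dimensional Artin-Schelter regular algebra by a degree-one variable is a special case of a graded Ore extension $C[y;\sigma',\delta']$ (with $\delta'=0$ and $y$ normal), so it suffices to exclude the Ore case. Such an extension forces a decomposition $B(h)_1=W\oplus ky$ with $\dim W=3$ in which the relation space $R\subseteq B(h)_1\otimes B(h)_1$ lies in $W\otimes B(h)_1+B(h)_1\otimes W$ (no $y\otimes y$ term) and splits as the three defining relations of $C$ together with three relations $yw_i-\sigma'(w_i)y-\delta'(w_i)$. The first condition says the hyperplane $W$ corresponds to a common zero of the six quadratic forms $Q_1,\dots,Q_6$ determined by the relations on $B(h)_1^{*}$; a short calculation shows that for generic $h$ their only common zeros are the four coordinate points, i.e. $ky\in\{kx_1,kx_2,ky_1,ky_2\}$. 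For each such candidate the relations then fail to split into the required $3+3$ Ore form (e.g. for $y=y_2$, $W=\langle x_1,x_2,y_1\rangle$ only one relation lives purely in $W$, far from the three needed for a $3$-dimensional Artin-Schelter regular algebra), so no Ore presentation exists; the only extra bookkeeping is for special values such as $h=1$, where the common-zero locus is larger but the splitting still fails.

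For (d) I would compute, from the Section-0 formula with $p_{11}=0,\ p_{12}=-1$, that $\det\sigma=\sigma_{22}\sigma_{11}+\sigma_{12}\sigma_{21}$ sends $x_1\mapsto h^2x_2$ and $x_2\mapsto -h^2x_1$, so $(\det\sigma)^2=-h^4\,\id$ and $\det\sigma$ has finite order precisely when $h$ is a root of unity. The equivalence with the polynomial-identity property I would get from the standard principle that such an Artin-Schelter regular algebra is PI exactly when its governing automorphism has finite order: infinite order of $\det\sigma$ obstructs any identity through its action on $\Ext^{*}_{B}(A,B)={}^{\det\sigma}A$ (Theorem \ref{xxthm0.4}), while finite order lets one exhibit a central subalgebra over which $B(h)$ is finite. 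Finally, for (e) the noetherian domain $B(h)$ has a quotient division ring $D$, and I would exhibit two elements (natural candidates being $x_1$ and $y_1$) and recover $x_2,y_2$ inside the division subring they generate, starting from $y_2=h^{-1}x_1^{-1}y_1x_2$ (from $y_1x_2=hx_1y_2$) and back-substituting the other cross relations. I expect the main obstacle of the whole proposition to be the domain property in (b)---everything downstream of Theorem \ref{xxthm0.1} depends on it and it cannot be read off formally---with the endgame of (e), namely confirming that the solved generators genuinely \emph{lie in} (rather than are merely determined by) the sub-division-ring, being the second point requiring a careful choice of generators.
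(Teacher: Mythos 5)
Your overall architecture for (a) and for the Artin--Schelter regularity in (b) matches the paper, but the proposal has several genuine gaps, the most serious being exactly the step you flag yourself: the domain property in (b). You propose to get ``strongly noetherian, Auslander regular, Cohen--Macaulay'' from Theorem \ref{xxthm0.1}, which presupposes that $B(h)$ is a domain, and your plan for proving the domain property is only the promise of ``a filtration/twisting argument'' that you concede cannot be the naive term-order filtration. The paper avoids this circularity entirely by a different route: it exhibits the normal elements $x_1x_2$, $x_1^2+x_2^2$, $y_1y_2$, $y_1^2+y_2^2$ (Corollary \ref{xxcor4.7}(a)), observes that the quotient by them is finite dimensional, invokes \cite{ASZ} to get strongly noetherian and ``enough normal elements,'' and then gets Auslander regularity, the Cohen--Macaulay property \emph{and} the domain property simultaneously from \cite[Theorem 1]{Zh} applied to the AS-Gorenstein algebra $B(h)$. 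Without some substitute for this (or an actual PBW-to-domain argument), your (b) does not close, and everything in (c)--(e) that leans on $B(h)$ being a domain inherits the gap. A second incorrect step is the opening move of (c): a normal extension in the sense of \cite{LSV} means only that $B$ has a degree-one normal nonzerodivisor $x$ with $B/(x)$ regular of dimension $3$; it is \emph{not} a special case of an Ore extension $C[y;\sigma']$ with $C\subset B$ (in a central extension the subalgebra generated by a complement of $x$ need not be the dimension-$3$ algebra at all). The paper therefore treats the two cases separately: Lemma \ref{xxlem4.9} shows $B(h)$ has no normal element in degree $1$ whatsoever, and Lemma \ref{xxlem4.10} rules out the Ore case by showing no graded subalgebra has Hilbert series $(1-t)^{-3}$ (seven independent degree-two elements in any $3$-generated subalgebra) --- a cleaner count than your common-zero-locus analysis.

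Two further points. In (d), the ``standard principle that such an Artin--Schelter regular algebra is PI exactly when its governing automorphism has finite order'' is not a citable theorem (the homogenized enveloping algebra of a semisimple Lie algebra in characteristic zero is a non-PI regular algebra whose relevant automorphism has finite order), and the action on $\Ext^*_B(A,B)={}^{\det\sigma}A$ does not by itself obstruct a polynomial identity. The paper argues both directions concretely: if $h$ is an $l$th root of unity then suitable powers of the normal elements above are central and $B(h)$ is finite over its center, hence PI; if $h$ is not a root of unity then the subalgebra generated by $y_1$ and $x_1x_2$, subject to $y_1(x_1x_2)=-h^2(x_1x_2)y_1$, is a non-PI quantum plane, so $B(h)$ is not PI. In (e), your candidate generators $x_1,y_1$ are not verified to work: the relations only give you $x_2y_1+x_1y_2$ and similar combinations inside the division subring they generate, and it is not clear how to isolate $x_2$. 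The paper's choice $y_1$ and $x_1-x_2$ succeeds in one line, since $y_1(x_1-x_2)=h(x_1+x_2)y_1$ puts $x_1+x_2$, hence $x_1$ and $x_2$, in the subring, and then $y_1x_2=hx_1y_2$ recovers $y_2$. You correctly anticipated that the generator choice is delicate; the fix is to change the generators, not merely to back-substitute.
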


Other properties of $B(h)$ can be found in Section 4. 

Here is an outline of the paper. We give the definition and
some remarks in Section 1. The proofs of Proposition 
\ref{xxprop0.3} and Theorem \ref{xxthm0.4} (depending on 
a lot of computation) are given in Section 2. 
The proof of Theorem \ref{xxthm0.2} is given in Section 3. 
We like to remark that Theorem \ref{xxthm0.4} is a key step 
for the proof of Theorem \ref{xxthm0.2}. Section 4 contains 
some examples and some basic questions.

A multi-variable version of the double extension can be defined 
similar to Definition \ref{xxdefn1.3}. We expect that multi-variable 
extensions are useful for constructing higher dimensional 
Artin-Schelter regular algebras. 

\section{Definitions} 
\label{xxsec1}

Throughout $k$ is a commutative base field. For convenience we
also assume that $k$ is algebraically closed. Everything is 
over $k$; in particular, an algebra or a ring is a $k$-algebra. 
Here is the definition of the original Ore extension.

\begin{definition}
\label{xxdefn1.1}
Let $A$ be an algebra with automorphism $\sigma$.
Let $\delta$ be a $\sigma$-derivation that satisfies 
$$\delta(ab)=\sigma(a)\delta(b)+\delta(a)b$$ 
for all $a,b\in A$. The {\it Ore extension} of 
$A$ associated with data $\{\sigma,\delta\}$ is a ring $B$, 
containing $A$ as a subring, generated by elements 
in $A$ and a new variable $y$ and subject to the 
relation
\begin{equation}
\label{E1.1.1}
ya=\sigma(a)y+\delta(a)
\tag{E1.1.1}
\end{equation}
for all $a\in A$. The Ore extension ring $B$ is
denoted by 
$A[y;\sigma,\delta]$. 
\end{definition}

\begin{remark}
\label{xxrem1.2}
The relation \eqref{E1.1.1} is different from
the one in the definition of the Ore extension given 
in \cite[p.15]{MR}. In fact the Ore extension 
$A[y;\sigma,\delta]$ in Definition \ref{xxdefn1.1} 
is isomorphic to the Ore extension $A[y;\sigma',\delta']$
of \cite[p.15]{MR} with  
$(\sigma',\delta')=(\sigma^{-1},-\delta\sigma^{-1})$. 
Therefore two definitions are equivalent. The reason we 
choose \eqref{E1.1.1} instead of the one in 
\cite[Chapter 1]{MR} is that a formula
similar to \eqref{E1.1.1}, namely \eqref{R2}, is 
more natural when we apply Bergman's diamond 
lemma in the double extension case. 
\end{remark}

We refer to \cite[Chapter 1]{MR} for some basic
properties of Ore extensions. One way to characterize
Ore extensions is that $A[y;\sigma,\delta]$ is
isomorphic to free modules $\bigoplus_{n\geq 0} Ay^n$ 
and $\bigoplus_{n\geq 0} y^n A$ as left and right 
$A$-modules respectively. We use this 
characterization to give an ``abstract''  definition 
of double extension without using $\sigma$ and 
$\delta$. Since our main interest is about connected 
graded rings, we will restrict our attention to the 
graded case in later sections. 

\begin{definition}
\label{xxdefn1.3} 
Let $A$ be an algebra and $B$ be another algebra 
containing $A$ as a subring. 
\begin{enumerate}
\item
We say $B$ is a {\it right double extension} of $A$ 
if the following conditions hold:
\begin{enumerate}
\item[(ai)]
$B$ is generated by $A$ and two new variables 
$y_1$ and $y_2$.
\item[(aii)]
$\{y_1,y_2\}$ satisfies a relation
\begin{equation}
\label{R1}
y_2y_1=p_{12}y_1y_2+p_{11}y_1^2+\tau_{1}y_1+
\tau_{2}y_2+\tau_{0}
\tag{R1}
\end{equation}
where $p_{12},p_{11}\in k$ and $\tau_{1}, \tau_{2}, 
\tau_{0} \in A$. 
\item[(aiii)]
As a left $A$-module, $B=\sum_{n_1,n_2\geq 0} 
Ay_1^{n_1}y_2^{n_2}$ and it is a left free 
$A$-module with a basis $\{y_1^{n_1}y_2^{n_2}
\;|\; n_1\geq 0,n_2\geq 0\}$.
\item[(aiv)]
$y_1 A+y_2 A\subseteq A y_1 + A y_2 + A$
\end{enumerate}
In the graded case it is required that all relations
of $B$ are homogeneous with assignment $\deg y_1>0$
and $\deg y_2>0$. Let $P$ denote the set of scalar 
parameters $\{p_{12},p_{11}\}$ and  let $\tau$ denote 
the set $\{\tau_{1},\tau_{2},\tau_{0}\}$. We 
call $P$ the {\it parameter} and $\tau$ the 
{\it tail}. 
\item
We say $B$ is a {\it left double extension} of $A$ 
if the following conditions hold:
\begin{enumerate}
\item[(bi)]
$B$ is generated by $A$ and two variables 
$y_1$ and $y_2$.
\item[(bii)]
$\{y_1,y_2\}$ satisfies 
\begin{equation}
\label{L1}
y_1y_2=p_{12}'y_2y_1+p_{11}'y_1^2+y_1\tau_{1}'+
y_2\tau_{2}'+\tau_{0}'
\tag{L1}
\end{equation}
where $p_{12}',p_{11}'\in k$ and $\tau_{1}',\tau_{2}', 
\tau_{0}' \in A$. 
\item[(biii)]
As a right $A$-module, $B=\sum_{n_1,n_2\geq 0} 
y_2^{n_1}y_1^{n_2}A$ and it is a right free 
$A$-module with a basis $\{y_2^{n_1}y_1^{n_2}
\;|\; n_1\geq 0,n_2\geq 0\}$.
\item[(biv)]
$Ay_1 +Ay_2 A\subseteq y_1 A +  y_2 A + A$.
\end{enumerate}
\item
We say $B$ is a {\it double extension} if it is a 
left and a right double extension of $A$ with the same 
generating set $\{y_1,y_2\}$.
\end{enumerate}
\end{definition}

We admit that our definition of a double extension is neither 
most general nor ideal, but it works very well in \cite{ZZ} 
where we only consider connected graded regular algebras 
generated in degree 1. Hopefully an improvement of the definition 
will be found when we study other classes of noncommutative rings. 

\begin{remark}
\label{xxrem1.4} 
\begin{enumerate}
\item
In many examples presented in later sections, both $A$ 
and $B$ are connected graded and $\deg y_1=\deg y_2=1$. 
\item
Up to a linear transformation of the vector space
$ky_1+ky_2$, the parameter set $\{p_{12},p_{11}\}$ can 
be assumed to be one of the following: 
$\{0,0\},\{1,1\}$ and $\{p,0\}$ for some $p\neq 0$. Such 
a linear transformation does not affect the definition of 
a right double extension.
\item
The relation (R1) could be made as general  as
\begin{equation}
\label{R1'}
p_{22}y^2+p_{21}y_2y_1+p_{12}y_1y_2+p_{11}y_1^2=
\tau_{1}y_1+\tau_{2}y_2+\tau_{0}.
\tag{R1'}
\end{equation}
Assume $p_{22}p_{11}-p_{12}p_{21}\neq 0$. After a
linear transformation, we may assume that
$\{p_{22},p_{21},p_{12},p_{11}\}$ is either 
$\{0,1,p_{12},0\}$ or $\{0,1,-1,-1\}$. Therefore
the equation (R1') under the condition 
$p_{22}p_{11}-p_{12}p_{21}\neq 0$ is equivalent to 
the equation (R1) under the condition $p_{12}\neq 0$.
\item
If $B$ is a right double extension as in Definition 
\ref{xxdefn1.3}(a), we do not require that $p_{12}\neq 0$. 
If $B$ is a double extension of $A$, then, by comparing
(R1) with (L1), $p_{12}p_{12}'=1$ and hence $p_{12}\neq 0$.
\item
If $B$ is a double extension, then $Ay_1+Ay_2+A=y_1A+y_2A+A$ 
and it is a free $A$-module of rank 3 on both sides.
\end{enumerate}
\end{remark}

We recall a basic property of an Ore 
extension which indicates that the condition in
Definition \ref{xxdefn1.3}(aiii) is reasonable.

\begin{lemma}
\label{xxlem1.5}
Let $B=A[y_1;\sigma_1,\delta_1]$ be an Ore 
extension of $A$ and $C=B[y_2;\sigma_2,\delta_2]$ 
be an Ore extension of $B$.
\begin{enumerate}
\item
$B$ is a free left $A$-module with a basis
$\{y_1^n\}_{n\geq 0}$.
\item 
$C$ is a free left $A$-module with a basis 
$\{y_1^{n_1}y_2^{n_2}\}_{n_1,n_2\geq 0}$.
\end{enumerate}
\end{lemma}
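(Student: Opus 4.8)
The plan is to reduce everything to the single well-known fact that an Ore extension is free as a one-sided module over the base, and then to iterate. Part (a) is the standard left-module freeness of an Ore extension, and I would derive it from the defining relation \eqref{E1.1.1} (taken with $y=y_1$, $\sigma=\sigma_1$, $\delta=\delta_1$). Reading $y_1 a=\sigma_1(a)y_1+\delta_1(a)$ from left to right, any product of generators can be rewritten so that every occurrence of $y_1$ is pushed to the right of the coefficients from $A$; iterating this shows $B=\sum_{n\geq 0}Ay_1^{n}$, so the powers of $y_1$ span $B$ as a left $A$-module. For the linear independence I would invoke the standard model of the Ore extension (equivalently, Bergman's diamond lemma applied to the single overlap created by \eqref{E1.1.1}), whose one ambiguity resolves precisely because $\delta_1$ is a $\sigma_1$-derivation; this produces a concrete realization of $B$ on the free left $A$-module $\bigoplus_{n\geq 0}Ay_1^{n}$ and hence the asserted basis. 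Since Remark \ref{xxrem1.2} identifies our convention with the one in \cite[Chapter 1]{MR}, part (a) may alternatively simply be quoted from there.

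For part (b) the plan is to apply part (a) twice and assemble a tower of free modules. Applying part (a) to the Ore extension $C=B[y_2;\sigma_2,\delta_2]$, the ring $C$ is free as a left $B$-module with basis $\{y_2^{n_2}\}_{n_2\geq 0}$, so $C=\bigoplus_{n_2\geq 0}By_2^{n_2}$ as left $B$-modules. Right multiplication by $y_2^{n_2}$ is a left $B$-module isomorphism $B\to By_2^{n_2}$, $b\mapsto by_2^{n_2}$, hence in particular a left $A$-module isomorphism. Applying part (a) to $B=A[y_1;\sigma_1,\delta_1]$ gives $B=\bigoplus_{n_1\geq 0}Ay_1^{n_1}$ as left $A$-modules, and under the above isomorphism the basis element $y_1^{n_1}$ is carried to $y_1^{n_1}y_2^{n_2}$. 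Therefore $By_2^{n_2}=\bigoplus_{n_1\geq 0}Ay_1^{n_1}y_2^{n_2}$ as left $A$-modules, and summing over $n_2$ yields $C=\bigoplus_{n_1,n_2\geq 0}Ay_1^{n_1}y_2^{n_2}$, which is exactly the claim.

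The computation here is entirely formal; the only genuine content lies in the freeness asserted in part (a). Accordingly I expect the main (and essentially only) obstacle to be the verification of linear independence for a single Ore extension, that is, checking that the one diamond-lemma ambiguity arising from \eqref{E1.1.1} is resolvable, which is precisely where the hypothesis that $\delta_1$ is a $\sigma_1$-derivation is used. Once part (a) is in hand, the tower argument for part (b) needs only that right multiplication by $y_2^{n_2}$ commutes with the left $A$-action, which is immediate from associativity.
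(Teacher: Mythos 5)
Your proposal is correct. The paper gives no proof of Lemma \ref{xxlem1.5} at all: it is recalled as a standard fact about Ore extensions with a pointer to \cite[Chapter 1]{MR}, so there is no argument in the text to compare against. Your two-step argument --- freeness of a single Ore extension via the rewriting rule \eqref{E1.1.1} together with the diamond lemma (the lone ambiguity resolving because $\delta_1$ is a $\sigma_1$-derivation), followed by the tower $C=\bigoplus_{n_2}By_2^{n_2}=\bigoplus_{n_1,n_2}Ay_1^{n_1}y_2^{n_2}$ --- is the standard one and is exactly the technique the authors themselves deploy later in the proof of Proposition \ref{xxprop1.11}, so it fits the paper's framework precisely.
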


The ring $C=(A[y_1;\sigma_1,\delta_1])[y_2;\sigma_2,\delta_2]$
in the above lemma is called an {\it iterated Ore extension}
of $A$. In general $C$ is not a (right) double extension 
in the sense of Definition \ref{xxdefn1.3}(a) because 
$C$ might not have a relation of the form \eqref{R1}.
If $\sigma_2$ is chosen properly so that (R1) holds 
for $C$, then $C$ becomes a (right) double extension. So many 
double extensions are iterated Ore extensions and vice versa. 

For our computation it is more useful to have an explicit 
description than an abstract definition. We rewrite the 
condition in Definition \ref{xxdefn1.3}(aiv) as follows:
\begin{equation}
\label{R2}
\begin{pmatrix} y_1\\y_2\end{pmatrix} r
:= \begin{pmatrix} y_1 r\\y_2 r\end{pmatrix} =
\begin{pmatrix} \sigma_{11}(r)& \sigma_{12}(r)\\
\sigma_{21}(r)&\sigma_{22}(r)\end{pmatrix} 
\begin{pmatrix} y_1\\y_2\end{pmatrix}
+\begin{pmatrix} \delta_1(r)\\ \delta_2(r)
\end{pmatrix}
\tag{R2}
\end{equation}
for all $r\in A$. 

Write $\sigma(r)=\begin{pmatrix} \sigma_{11}(r)&
\sigma_{12}(r)\\
\sigma_{21}(r)&\sigma_{22}(r)\end{pmatrix} $ and
$\delta(r)=
\begin{pmatrix} \delta_1(r)\\ \delta_2(r)
\end{pmatrix}$. Then $\sigma$ is a $k$-linear map 
from $A$ to $M_2(A)$ where $M_2(A)$ is the 
$2\times 2$-matrix algebra over $A$; and 
$\delta$ is a $k$-linear map from $A$ to the 
column $A$-module 
$A^{\oplus 2}:=\begin{pmatrix} A\\A\end{pmatrix}$. 
The equation (R2) can be written as
$$\begin{pmatrix} y_1\\y_2\end{pmatrix} r
=\sigma(r)
\begin{pmatrix} y_1\\y_2\end{pmatrix}
+\delta(r).$$
This equation is a generalization of \eqref{E1.1.1}.
Suppose $\sigma: A\to M_2(A)$ is an algebra homomorphism, 
namely, $\sigma(rs)=\sigma(r)\sigma(s)$ for all 
$r,s\in A$. A $k$-linear
map $\delta: A\to A^{\oplus 2}$ is called
a {\it $\sigma$-derivation} if 
$$\delta(rs)=\sigma(r)\delta(s)+\delta(r) s$$
for all $r,s\in A$.

\begin{convention}
\label{xxcon1.6}
\begin{enumerate}
\item
By (R2), $\sigma$ and $\delta$ are uniquely determined and 
$\sigma$ is a $k$-linear map from $A$ to $M_2(A)$ and $\delta$ 
is a $k$-linear map from $A$ to $A^{\oplus 2}$. Together with 
Definition \ref{xxdefn1.3}(a), all symbols of 
$\{P,\sigma,\delta,\tau\}$ are defined now. When everything is 
understood, a right double extension or a double extension $B$ 
is also denoted by $A_P[y_1,y_2;\sigma,\delta,\tau]$. By this
notation, we are working with a right double extension though
$B$ can be also a (left) double extension. 
\item
By the next lemma, when $B=A_P[y_1,y_2;\sigma,\delta,\tau]$ 
is a right double extension, then $\sigma$ is an algebra 
homomorphism and $\delta$ a $\sigma$-derivation. Recall from 
Definition \ref{xxdefn1.3}(a) $P$ is called a {\it parameter} 
and $\tau$ is called a {\it tail}. We now call $\sigma$ a 
{\it homomorphism}, $\delta$ a {\it derivation} and the 
collection $\{P,\sigma,\delta,\tau\}$ {\it DE-data} of $A$.
\item
If $\delta$ is a zero map and $\tau$ consists of
zero elements, then the right double extension is denoted 
by $A_{P}[y_1,y_2;\sigma]$. We call $A_{P}[y_1,y_2;\sigma]$ 
a {\it trimmed (right) double extension}.
\end{enumerate}
\end{convention}

\begin{lemma}
\label{xxlem1.7}
Let $B=A_P[y_1,y_2;\sigma,\delta,\tau]$ be a right 
double extension of $A$. Suppose that $\{\sigma,\delta\}$
are defined by \textup{(R2)}. Then the following hold.
\begin{enumerate}
\item
$\sigma$ is an algebra homomorphism $A\to M_2(A)$.
\item
$\delta: A\to A^{\oplus 2}$ is a 
$\sigma$-derivation.
\item
If $\sigma: A\to M_2(A)$ is an algebra homomorphism
and $\delta:
A\to A^{\oplus 2}$ is a $\sigma$-derivation, then
\textup{(R2)}
holds for all elements $r\in A$ if and only if it
holds
for a set of generators.
\end{enumerate}
\end{lemma}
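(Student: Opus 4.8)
The plan is to derive parts (a) and (b) simultaneously from the single associativity identity $(y_i r)s = y_i(rs)$ in $B$, and then to obtain (c) by an algebra-closure argument built on the very same computation.

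For (a) and (b), I would fix $r,s\in A$ and evaluate the column vector $\begin{pmatrix} y_1(rs)\\ y_2(rs)\end{pmatrix}$ in two ways. Applying (R2) directly to the element $rs$ gives $\sigma(rs)\begin{pmatrix} y_1\\ y_2\end{pmatrix}+\delta(rs)$. On the other hand, starting from $\begin{pmatrix} y_1 r\\ y_2 r\end{pmatrix}s$, I would first rewrite $\begin{pmatrix} y_1 r\\ y_2 r\end{pmatrix}$ by (R2) for $r$, then use associativity in $B$ to push the right multiplication by $s$ past the matrix entries of $\sigma(r)$ (which lie in $A$), and finally apply (R2) for $s$. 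This produces $\sigma(r)\sigma(s)\begin{pmatrix} y_1\\ y_2\end{pmatrix}+\sigma(r)\delta(s)+\delta(r)s$, where $\sigma(r)\sigma(s)$ is the product in $M_2(A)$ and $\sigma(r)\delta(s)$ is the $M_2(A)$-action on $A^{\oplus 2}$; both steps unwind to associativity of the ambient product in $B$. Comparing the two expressions and invoking Definition \ref{xxdefn1.3}(aiii) — the elements $1=y_1^0y_2^0$, $y_1$, $y_2$ are part of a free left $A$-basis of $B$, hence left $A$-linearly independent — I can match coefficients: the coefficient matrix of $\begin{pmatrix} y_1\\ y_2\end{pmatrix}$ forces $\sigma(rs)=\sigma(r)\sigma(s)$, giving (a), and the constant term forces $\delta(rs)=\sigma(r)\delta(s)+\delta(r)s$, giving (b). A one-line check on $r=1$ using $y_i\cdot 1=y_i$ then yields $\sigma(1)=I$ and $\delta(1)=0$, so $\sigma$ is unital.

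For (c), the forward implication is immediate. For the converse I would let $S$ be the set of $r\in A$ for which (R2) holds with the given $\sigma,\delta$, and show $S$ is a $k$-subalgebra of $A$. Linearity of $\sigma$, $\delta$, and of the multiplication in $B$ makes $S$ a $k$-subspace containing $1$. Closure under products is the computation above run in reverse: if $r,s\in S$, then expanding $\begin{pmatrix} y_1(rs)\\ y_2(rs)\end{pmatrix}$ through (R2) for $r$ and then for $s$ yields $\sigma(r)\sigma(s)\begin{pmatrix} y_1\\ y_2\end{pmatrix}+\sigma(r)\delta(s)+\delta(r)s$, and now the hypotheses that $\sigma$ is an algebra homomorphism and $\delta$ a $\sigma$-derivation collapse this to $\sigma(rs)\begin{pmatrix} y_1\\ y_2\end{pmatrix}+\delta(rs)$, i.e. $rs\in S$. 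Since $S$ is a subalgebra containing a generating set, $S=A$.

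The main obstacle is not any single deep step but the careful bookkeeping in the two-sided computation: one must check that factoring $\sigma(r)$ out to the left and pushing $s$ through to the right are both legitimate, and in each case the justification is associativity of the product in $B$ together with the fact that the matrix entries lie in $A$ and act on $B$ through its left $A$-module structure. The one genuinely structural input is Definition \ref{xxdefn1.3}(aiii): without the left-freeness — equivalently, the left $A$-linear independence of $1,y_1,y_2$ — the coefficient comparison that separates the $\sigma$-part from the $\delta$-part would fail.
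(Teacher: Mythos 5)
Your proposal is correct and follows essentially the same route as the paper: both prove (a) and (b) by computing $\bigl(\begin{smallmatrix} y_1\\ y_2\end{smallmatrix}\bigr)(rs)$ in two ways via associativity and then comparing coefficients using the left $A$-freeness of $Ay_1\oplus Ay_2\oplus A$ from Definition \ref{xxdefn1.3}(aiii), and both prove (c) by showing the set $S$ of elements satisfying (R2) is a subalgebra containing the generators. The only additions in your write-up (the unitality check $\sigma(1)=I$, $\delta(1)=0$) are harmless and consistent with the paper's argument.
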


\begin{proof} (a,b) By (R2) we have 
$$\begin{pmatrix} y_1\\y_2\end{pmatrix} (rs)
=\sigma(rs)
\begin{pmatrix} y_1\\y_2\end{pmatrix}
+\delta(rs).$$ 
Using associativity we can also express 
$\begin{pmatrix} y_1\\y_2\end{pmatrix} (rs)$
as
$$(\begin{pmatrix} y_1\\y_2\end{pmatrix}r)s
=(\sigma(r)
\begin{pmatrix} y_1\\y_2\end{pmatrix}
+\delta(r))s
=\sigma(r)\sigma(s)
\begin{pmatrix} y_1\\y_2\end{pmatrix}
+(\sigma(r)\delta(s)+\delta(r)s).$$
By Definition \ref{xxdefn1.3}(aiii) $Ay_1+Ay_2+A$
is a free module $Ay_1\oplus Ay_2\oplus A$
with a basis $\{y_1,y_2,1\}$. 
The assertions follow by comparing the 
coefficients in two different expressions of
$\begin{pmatrix} y_1\\y_2\end{pmatrix} (rs)$.

(c) Suppose (R2) holds for a set of generators 
of $A$. Let $S$ be the set of elements in $A$ 
such that (R2) holds. Then $S$ is a $k$-linear 
subspace of $A$ closed under multiplication 
by the facts that $\sigma$ is an algebra 
homomorphism and that $\delta$ is a 
$\sigma$-derivation. So $S=A$ since $S$
contains a set of generators of $A$.
\end{proof}

A trivial example of a (right) double extension is when $A=k$.
A right double extension of $k$, denoted by $B$, is isomorphic
to $\bigoplus_{n_1,n_2\geq 0} ky_1^{n_1}y_2^{n_2}$ as a 
$k$-vector space. It is isomorphic to the algebra 
$k\langle y_1,y_2\rangle/(r)$ where $r$ is the relation 
$$y_2y_1=p_{12}y_{1}y_2+p_{11}y_1^2 +a_1 y_1+a_2y_2+a_3$$ 
for some $p_{12},p_{11},a_1,a_2,a_3\in k$.
We can easily find out the associated DE-data. The homomorphism 
$\sigma$ is the ``identity'' 
$\begin{pmatrix} Id_k&0\\0&Id_k\end{pmatrix}$;
the derivation $\delta$ is zero; the parameter $P$ is 
$\{p_{12},p_{11}\}$; the tail $\tau$ is $\{a_1,a_2,a_3\}$.
If $p_{12}\neq 0$, $B$ is a double extension. 
If $p_{12}=0$, then $B$ is not a double extension since $B$ 
is not a left double extension with respect to the generating 
set $\{y_1,y_2\}$. However, $B$ is a left double extension with 
respect to a different generating set $\{y_1',y_2'\}=
\{y_2-p_{11}y_1,y_1\}$. Some non-trivial examples are given
in Section 4.

Another way of writing (R2) is the following. 
Let $y_0=1$ and
$\sigma_{00}(r)=r$ and $\sigma_{i0}=\delta_i$
for $i=1,2$. Then (R2) is equivalent to 
\begin{equation}
\label{R2'}
\begin{pmatrix} y_0\\y_1\\y_2\end{pmatrix} r= 
\begin{pmatrix} \sigma_{00}(r)&0&0\\ 
\sigma_{10}(r)&\sigma_{11}(r)& \sigma_{12}(r)\\
\sigma_{20}(r)&\sigma_{21}(r)&\sigma_{22}(r)\end{pmatrix}
\begin{pmatrix} y_0\\ y_1\\y_2\end{pmatrix}
\quad \text{or}\quad
\begin{pmatrix} y_0\\y_1\\y_2\end{pmatrix} r=
\widehat{\sigma}(r)
\begin{pmatrix} y_0\\ y_1\\y_2\end{pmatrix}
\tag{R2'}
\end{equation}
where
$\widehat{\sigma}(r)=\begin{pmatrix}
\sigma_{00}(r)&0&0\\ 
\sigma_{10}(r)&\sigma_{11}(r)& \sigma_{12}(r)\\
\sigma_{20}(r)&\sigma_{21}(r)&\sigma_{22}(r)\end{pmatrix}$.
Similar to Lemma \ref{xxlem1.7} one can easily show
that
$\widehat{\sigma}: A\to M_3(A)$ is an algebra
homomorphism. 

The homomorphism $\sigma$ is not surjective, so it 
can not be invertible in the usually sense. Recall 
in the classical Ore extension case that 
$\sigma$ is invertible if and only if
\eqref{E1.1.1} can be written as
$$xa=\sigma '(a)x+\delta'(a)$$
for all $a\in A$; and $\sigma'$ is the inverse of
$\sigma$. We use this idea to define invertibility 
of $\sigma$ in the right double extension case. 

\begin{definition}
\label{xxdefn1.8}
Let $\sigma: A\to M_2(A)$ be an algebra homomorphism.
We say $\sigma$ is {\it invertible} if there is 
an algebra homomorphism
$\phi=\begin{pmatrix} \phi_{11}&\phi_{12}\\
\phi_{21}&\phi_{22}\end{pmatrix}: A\to M_2(A)$
satisfies the following conditions:
$$\sum_{k=1}^2 \phi_{jk}(\sigma_{ik}(r))=
\begin{cases} r& \text{if  } i=j\\
               0& \text{if  } i\neq j
\end{cases}
\quad \text{and}\quad
\sum_{k=1}^2 \sigma_{kj}(\phi_{ki}(r))=
\begin{cases} r& \text{if  } i=j\\
               0& \text{if  } i\neq j
\end{cases}
$$
for all $r\in A$; or equivalently,
$$\begin{pmatrix} \phi_{11}&\phi_{12}\\
\phi_{21}&\phi_{22}\end{pmatrix}
\bullet
\begin{pmatrix} \sigma_{11}&\sigma_{21}\\
\sigma_{12}&\sigma_{22}\end{pmatrix}
=
\begin{pmatrix} \sigma_{11}&\sigma_{21}\\
\sigma_{12}&\sigma_{22}\end{pmatrix}
\bullet
\begin{pmatrix} \phi_{11}&\phi_{12}\\
\phi_{21}&\phi_{22}\end{pmatrix}
=\begin{pmatrix} Id_A&0\\
0&Id_A\end{pmatrix}$$
where $\bullet$ is the multiplication of
the matrix algebra $M_2(\End_k(A))$. The
multiplication of $\End_k(A)$ is the 
composition of $k$-linear maps.
The map $\phi$ is called the {\it inverse} 
of $\sigma$. 
\end{definition}

\begin{lemma}
\label{xxlem1.9} Let $B=A_{P}[y_1,y_2;\sigma,\delta,\tau]$
be a right double extension. Then $\sigma$ is invertible 
if and only if $A y_1+A y_2+A=y_1 A+y_2 A+A$ and it is a free
$A$-module with basis $\{1,y_1,y_2\}$ on both sides. 

As a consequence, if $B$ is a double extension, then
$\sigma$ is invertible.
\end{lemma}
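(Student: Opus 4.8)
The plan is to read both the hypothesis and the conclusion as statements about how scalars of $A$ commute past the new variables. The relation (R2) lets one push a scalar from the right of $y_i$ to its left, with the passage governed by $\sigma$ and $\delta$; the asserted conditions are precisely that one can also push a scalar from the left of $y_i$ to its right, and the map governing \emph{that} passage will be the candidate for the inverse $\phi$. Throughout I would work inside $M:=Ay_1+Ay_2+A$, which by Definition \ref{xxdefn1.3}(aiii) is the free left $A$-module $A\oplus Ay_1\oplus Ay_2$ with basis $\{1,y_1,y_2\}$; this left freeness is what lets me compare coefficients. I write $\Sigma=\left(\begin{smallmatrix}\sigma_{11}&\sigma_{21}\\\sigma_{12}&\sigma_{22}\end{smallmatrix}\right)$ and identify $\phi$ with its matrix $\left(\begin{smallmatrix}\phi_{11}&\phi_{12}\\\phi_{21}&\phi_{22}\end{smallmatrix}\right)$ as in Definition \ref{xxdefn1.8}.

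For the forward direction, suppose $\sigma$ has inverse $\phi$. Given $a\in A$ I would try the ansatz $ay_1=y_1b_1+y_2b_2+c$, expand the right-hand side by (R2), and compare the coefficients of $y_1,y_2,1$ using left freeness; this forces $\sigma_{11}(b_1)+\sigma_{21}(b_2)=a$, $\sigma_{12}(b_1)+\sigma_{22}(b_2)=0$ and $c=-\delta_1(b_1)-\delta_2(b_2)$. The first two equations say $\Sigma\binom{b_1}{b_2}=\binom{a}{0}$, and since $\phi$ is a two-sided inverse of $\Sigma$ the unique solution is $b_1=\phi_{11}(a)$, $b_2=\phi_{21}(a)$; symmetrically $ay_2=y_1\phi_{12}(a)+y_2\phi_{22}(a)+c'$. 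Hence $Ay_1+Ay_2\subseteq y_1A+y_2A+A$, which together with Definition \ref{xxdefn1.3}(aiv) yields $Ay_1+Ay_2+A=y_1A+y_2A+A=M$. Left freeness is (aiii); for right freeness I would take a relation $c_0+y_1c_1+y_2c_2=0$, expand by (R2), and read off $\Sigma\binom{c_1}{c_2}=0$ together with $c_0+\delta_1(c_1)+\delta_2(c_2)=0$; applying $\phi\bullet\Sigma=\mathrm{Id}$ gives $c_1=c_2=0$ and then $c_0=0$.

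For the converse I would use right freeness to manufacture $\phi$. Since $M=y_1A+y_2A+A$ is right free on $\{1,y_1,y_2\}$ and (by the assumed equality of the two spans) is an $A$-sub-bimodule, every $ay_j$ has a unique expansion $ay_j=y_1\phi_{1j}(a)+y_2\phi_{2j}(a)+\gamma_j(a)$ with $\phi_{ij}(a),\gamma_j(a)\in A$; this defines $k$-linear maps $\phi_{ij}$, the ``tail'' $\gamma_j$ being discarded. That $\phi$ is an algebra homomorphism follows from an associativity argument in the style of Lemma \ref{xxlem1.7}: expanding $(rs)y_j$ as $r(sy_j)$ in two ways and comparing the $y_l$-coefficients (right freeness) gives $\phi_{lj}(rs)=\sum_i\phi_{li}(r)\phi_{ij}(s)$, i.e.\ $\phi(rs)=\phi(r)\phi(s)$, while $1\cdot y_j=y_j$ gives $\phi(1)=\mathrm{Id}$. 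Finally I would verify the two conditions of Definition \ref{xxdefn1.8} by composing the two passage rules: expanding $y_ir$ first by (R2), then re-expanding each $\sigma_{ij}(r)y_j$ by the definition of $\phi$, and comparing with the trivial right expansion of $y_ir$, produces $\sum_j\phi_{lj}(\sigma_{ij}(r))$ equal to $r$ for $i=l$ and $0$ otherwise; the mirror computation starting from $ry_j$ gives $\sum_i\sigma_{il}(\phi_{ij}(r))$ equal to $r$ for $l=j$ and $0$ otherwise. These are exactly the identities making $\phi$ the inverse of $\sigma$.

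The consequence is then immediate: if $B$ is a double extension, Remark \ref{xxrem1.4}(e) together with Definition \ref{xxdefn1.3}(aiii),(biii) shows that $Ay_1+Ay_2+A=y_1A+y_2A+A$ is free with basis $\{1,y_1,y_2\}$ on both sides, so the equivalence just proved gives that $\sigma$ is invertible. I expect the main obstacle to be the converse, specifically checking that the right-expansion maps $\phi_{ij}$ satisfy \emph{both} one-sided identities of Definition \ref{xxdefn1.8} rather than only one: this is where the index bookkeeping in the two compositions of (R2) with the $\phi$-rule must be kept straight, and where the two-sided freeness of $M$ (uniqueness of both the left and the right expansions) is used in an essential way.
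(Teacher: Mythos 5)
Your proof is correct and follows essentially the same route as the paper: the forward direction reproduces \eqref{E1.9.1} and the coefficient-comparison argument for two-sided freeness, and the converse defines $\phi$ from the unique right expansions of $ay_j$, exactly as the paper indicates. The only difference is that you carry out in full the verification that $\phi$ is a homomorphism and satisfies both identities of Definition \ref{xxdefn1.8}, which the paper leaves as ``one can check''; your index bookkeeping there is correct.
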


\begin{proof} Let $\phi=\begin{pmatrix} \phi_{11}& \phi_{12}\\
\phi_{21}&\phi_{22}\end{pmatrix} $ be the inverse 
of $\sigma$ defined in Definition \ref{xxdefn1.8}. Then it 
follows from (R2) and the definition of $\phi$ that,
for every $r\in A$,
$$
(y_1,y_2) \begin{pmatrix} \phi_{11}(r)& \phi_{12}(r)\\
\phi_{21}(r)&\phi_{22}(r)\end{pmatrix} = (ry_1,ry_2)-
\begin{pmatrix} \delta'_1(r),&\delta'_2(r)
\end{pmatrix}$$
where
$$\delta'_1(r)=-\delta_1(\phi_{11}(r))-\delta_2(\phi_{21}(r))$$
$$\delta'_2(r)=-\delta_1(\phi_{12}(r))-\delta_2(\phi_{22}(r)).$$
Or, after rearranging the terms, 
\begin{equation}
\label{E1.9.1}
 r (y_1,y_2)= (y_1,y_2) \begin{pmatrix} \phi_{11}(r)& \phi_{12}(r)\\
\phi_{21}(r)&\phi_{22}(r)\end{pmatrix} 
+\begin{pmatrix} \delta'_1(r),&\delta'_2(r)
\end{pmatrix}.
\tag{E1.9.1}
\end{equation}
By using \eqref{E1.9.1}, we see that $Ay_1+Ay_2+A\subseteq y_1A+y_2A+A$
and by (R2), we have $y_1A+y_2A+A\subseteq Ay_1+Ay_2+A$. Thus 
$Ay_1+Ay_2+A=y_1A+y_2A+A$. Following by the definition of right 
double extension, $Ay_1+Ay_2+A$ is a free left $A$-module with basis 
$\{1,y_1,y_2\}$, whence it is of rank 3.
It remains to show that $y_1A+y_2A+A$ is a free right $A$-module
with basis $\{1,y_1,y_2\}$. Suppose on the contrary that 
$\{1,y_1,y_2\}$ is not an $A$-basis. Then there are elements 
$\{a,b,c\}$ of $A$, not all zero, such that $y_1a+y_2b+c=0$. By 
using (R2), we have 
$$y_1 a+y_2 b+c=[\sigma_{11}(a)+\sigma_{21}(b)]y_1+
[\sigma_{12}(a)+\sigma_{22}(b)]y_2+[\delta_1(a)+\delta_2(b)+c].$$
Since $Ay_1+Ay_2+A$ is free, we have
$$\sigma_{11}(a)+\sigma_{21}(b)=\sigma_{12}(a)+\sigma_{22}(b)
=\delta_1(a)+\delta_2(b)+c=0.$$
By the definition of $\phi$, we have
$$0=\sum_{k=1}^2 \phi_{1k}(\sigma_{2k}(b))=
\sum_{k=1}^2 \phi_{1k}(-\sigma_{1k}(a))=-a$$
and
$$0=\sum_{k=1}^2 \phi_{2k}(\sigma_{1k}(a))=
\sum_{k=1}^2 \phi_{2k}(-\sigma_{2k}(b))=-b.$$
Since $a=b=0$, we also have $c=y_1a+y_2b+c=0$. This
yields a contradiction. Therefore $y_1A+y_2A+A$ is a free right 
$A$-module with basis $\{1,y_1,y_2\}$.

For the converse implication we note that every element
$ry_i$ can be expressed uniquely as $y_1\phi_{1i}(r)+y_2\phi_{2i}(r)
+\delta'_i(r)$ for some $\phi_{1i}(r),\phi_{2i}(r),\delta'_i(r)$
in $A$. Using this we can define $\phi$. One can 
check that $\phi$ is the inverse of $\sigma$.

The consequence follows from the main assertion and Remark 
\ref{xxrem1.4}(e).
\end{proof}

As a consequence of Lemma \ref{xxlem1.9} the invertibility of
$\sigma$ is independent of the choice of the DE-data 
$\{P,\sigma,\delta,\tau\}$.

We conjecture that if $\sigma$ is invertible, then $B$ is also 
a free right $A$-module with basis 
$\{y_1^{n_1}y_2^{n_2}\}_{n_1,n_2\geq 0}$. If further $p_{12}\neq 0$,
we conjecture that $B$ is a double extension.
This is true in the graded case (see Proposition \ref{xxprop1.13}).

Next we will list the relations that come from
commuting $r\in A$ with (R1). The collection of
the following six relations is called (R3) for 
short. Recall that $\sigma_{i0}=\delta_i$.

\bigskip
\bigskip

\centerline{Relations (R3)}
\begin{align}
\label{R3.1}
\sigma_{21}&(\sigma_{11}(r))+p_{11}\sigma_{22}(\sigma_{11}(r))
\tag{R3.1}\\
\notag
&=p_{11}\sigma_{11}(\sigma_{11}(r))+p_{11}^2\sigma_{12}(\sigma_{11}(r))
+p_{12}\sigma_{11}(\sigma_{21}(r))+p_{11}p_{12}\sigma_{12}(\sigma_{21}(r))
\end{align}

\begin{align}
\label{R3.2}
\sigma_{21}&(\sigma_{12}(r))+p_{12}\sigma_{22}(\sigma_{11}(r))
\tag{R3.2}\\
\notag
&=
p_{11}\sigma_{11}(\sigma_{12}(r))+p_{11}p_{12}\sigma_{12}(\sigma_{11}(r))
+p_{12}\sigma_{11}(\sigma_{22}(r))+p_{12}^2\sigma_{12}(\sigma_{21}(r))
\end{align}

\begin{align}
\label{R3.3}
\sigma_{22}&(\sigma_{12}(r))
\tag{R3.3}\\
&=p_{11}\sigma_{12}(\sigma_{12}(r))+p_{12}\sigma_{12}(\sigma_{22}(r))
\qquad\qquad\qquad\qquad\qquad\qquad\qquad\qquad\quad
\notag
\end{align}

\begin{align}
\label{R3.4}
\sigma_{20}&(\sigma_{11}(r))+\sigma_{21}(\sigma_{10}(r))+\tau_{1}
\sigma_{22}(\sigma_{11}(r))\qquad\qquad\qquad\qquad\qquad\qquad
\qquad \qquad
\tag{R3.4}\\
&=p_{11}[\sigma_{10}(\sigma_{11}(r))
+\sigma_{11}(\sigma_{10}(r))
+\tau_{1}\sigma_{12}(\sigma_{11}(r))]
\notag\\
&
\quad +p_{12}[\sigma_{10}(\sigma_{21}(t))
+\sigma_{11}(\sigma_{20}(r))
+\tau_{1}\sigma_{12}(\sigma_{21}(r))]
+\tau_{1}\sigma_{11}(r)+\tau_{2}\sigma_{21}(r)
\notag
\end{align}

\begin{align}
\label{R3.5}
\sigma_{20}&(\sigma_{12}(r))
+\sigma_{22}(\sigma_{10}(r))+\tau_{2}
\sigma_{22}(\sigma_{11}(r))
\qquad\qquad\qquad\qquad\qquad\qquad\qquad
\tag{R3.5}\\
&=p_{11}[\sigma_{10}(\sigma_{12}(r))
+\sigma_{12}(\sigma_{10}(r))
+\tau_{2}\sigma_{12}(\sigma_{11}(r))]
\notag\\
&
\quad +p_{12}[\sigma_{10}(\sigma_{22}(r))+
\sigma_{12}(\sigma_{20}(r))
+\tau_{2}\sigma_{12}(\sigma_{21}(r))]
+\tau_{1}\sigma_{12}(r)+\tau_{2}\sigma_{22}(r)
\notag
\end{align}

\begin{align}
\label{R3.6}
\sigma_{20}&(\sigma_{10}(r))+
\tau_{0}\sigma_{22}(\sigma_{11}(r))
\tag{R3.6}\\
&=p_{11}[\sigma_{10}(\sigma_{10}(r))+
\tau_{0}\sigma_{12}(\sigma_{11}(r))]
\qquad\qquad\qquad\qquad\qquad\qquad\qquad \quad \quad
\notag\\
&\quad +p_{12}[\sigma_{10}(\sigma_{20}(r))+
\tau_{0}\sigma_{12}(\sigma_{21}(r))]
+\tau_{1}\sigma_{10}(r)+\tau_{2}\sigma_{20}(r)+\tau_{0}r.
\notag
\end{align}

\bigskip

Note that the relations in (R3) are constraints between 
$P,\sigma,\delta$, and $\tau$. In other words, to form 
a right double extension, the DE-data 
$\{P,\sigma,\delta,\tau\}$ must satisfies
(R3) (as we know by Lemma \ref{xxlem1.7} that $\sigma$ 
must be an algebra homomorphism and that $\delta$ must be 
a $\sigma$-derivation). 

By using (R2) we can express an element of the form 
$y_iy_j r$ as a sum of the forms $r'y_{i'}y_{j'}$ and
$r''y_{i''}$. Combining with relation (R1), there are two 
different ways of expressing $y_2y_1 r$. One way is first 
to write $y_2y_1$ as sum of other elements by (R1), then 
to use (R2). This means we resolve $(y_2y_1)$ first in 
$y_2y_1r$. The other way is to use (R2) to move $y_1$ and 
then $y_2$ to the right-hand side of $r$, then to simplify the 
expression by using all possible relations including (R2). In 
other words, we resolve the part $(y_1 r)$ first in $y_2y_1r$. 
The process of writing $y_iy_j r$ 
as a sum of the forms $r'y_{i'}y_{j'}$ and $r''y_{i''}$
is called {\it resolving}. In general if $r,y_1,y_2$ are
elements in a ring, two different ways of resolving 
$y_2y_1 r$ could get two different expressions. 

\begin{lemma}
\label{xxlem1.10} Assume \textup{(R2') and (R1)} and
$\sigma_{i0}=\delta_i$ for $i=1,2$. 
\begin{enumerate}
\item
If we first move $y_j$ from the left-hand side of $r$ to
the right-hand side of $r$ and then move $y_i$, we have
\begin{align}\notag
y_i(y_j r)=&
[\sigma_{i1}(\sigma_{j1}(r))+p_{11}\sigma_{i2}(\sigma_{j1}(r))]y_1^2\\
\notag &+ \sigma_{i2}(\sigma_{j2}(r)) y_2^2 \\
\notag &+[\sigma_{i1}(\sigma_{j2}(r))+p_{12} 
\sigma_{i2}(\sigma_{j1}(r))]y_1y_2\\
\notag
&+[\sigma_{i0}(\sigma_{j1}(r))+\sigma_{i1}(\sigma_{j0}(r))+
\tau_{1}\sigma_{i2}(\sigma_{j1}(r))] y_1\\
\notag
&+[\sigma_{i0}(\sigma_{j2}(r))+\sigma_{i2}(\sigma_{j0}(r))+
\tau_{2}\sigma_{i2}(\sigma_{j1}(r))] y_2\\
\notag
&+[\sigma_{i0}(\sigma_{j0}(r))+\tau_{0}\sigma_{i2}(\sigma_{j1}(r))].
\end{align}
\item
Suppose $B=A_P[y_1,y_2;\sigma,\delta,\tau]$ is a right 
double  extension. 
Then six relations \textup{(R3.1)-(R3.6)} hold for
the DE-data.
\item
The six relations in \textup{(R3)} hold if and only if 
the resulting elements of $y_2y_1 r$ obtained from resolving 
$(y_2y_1)r$ and from resolving $y_2(y_1r)$ are the same.
\item
If $\delta=0$ and $\tau=0$, then the relations 
\textup{(R3.4)-(R3.6)} become trivial and the relation
\textup{(R3.1)-(R3.3)} remain unchanged. 
\end{enumerate}
\end{lemma}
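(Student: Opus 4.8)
The plan is to treat part (a) as the computational engine and to derive (b), (c) and (d) from it. For (a) I would apply the rewriting rule \eqref{R2'} twice. Writing $y_j r=\sum_{l=0}^2 \sigma_{jl}(r)y_l$ (with $y_0=1$ and $\sigma_{j0}=\delta_j$), I then push $y_i$ past each coefficient $\sigma_{jl}(r)\in A$ using \eqref{R2'} again, namely $y_i\,\sigma_{jl}(r)=\sum_{m=0}^2 \sigma_{im}(\sigma_{jl}(r))y_m$, to obtain $y_i(y_j r)=\sum_{l,m=0}^2 \sigma_{im}(\sigma_{jl}(r))\,y_my_l$. Every product $y_my_l$ with $(m,l)\neq(2,1)$ is already one of the normal monomials $1,y_1,y_2,y_1^2,y_1y_2,y_2^2$; the sole exception $y_2y_1$ is replaced using \eqref{R1}. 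Collecting the coefficient of each normal monomial then yields exactly the six displayed expressions of (a); for instance the coefficient of $y_1^2$ receives the direct contribution $\sigma_{i1}(\sigma_{j1}(r))$ from $(m,l)=(1,1)$ together with $p_{11}\sigma_{i2}(\sigma_{j1}(r))$ from the $(m,l)=(2,1)$ term after applying \eqref{R1}, and similarly for the rest.

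For (c) I would resolve $y_2y_1 r$ in the two prescribed orders and compare. Resolving $y_2(y_1 r)$ is exactly (a) with $i=2$, $j=1$; call the resulting expression $E_2$. Resolving $(y_2y_1)r$ means first applying \eqref{R1} to get $y_2y_1 r=p_{12}(y_1y_2 r)+p_{11}(y_1^2 r)+\tau_1(y_1 r)+\tau_2(y_2 r)+\tau_0 r$, and then resolving each summand: the pieces $y_1y_2 r$ and $y_1^2 r$ are handled by (a) with $(i,j)=(1,2)$ and $(1,1)$, while $y_1 r$ and $y_2 r$ come straight from \eqref{R2}. Call this expression $E_1$. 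Both $E_1$ and $E_2$ are $A$-linear combinations of the normal monomials, so I would subtract them and read off the coefficient of each monomial. The key point, confirmed by the bookkeeping, is that the six equations ``coefficient of $y_1^2,y_2^2,y_1y_2,y_1,y_2,1$ in $E_1$ equals that in $E_2$'' are literally \eqref{R3.1}--\eqref{R3.6} after rearranging; e.g. matching the $y_1^2$-coefficients produces the left side $\sigma_{21}(\sigma_{11}(r))+p_{11}\sigma_{22}(\sigma_{11}(r))$ of \eqref{R3.1} from $E_2$ and its right side from $E_1$. Hence the two resolutions coincide formally if and only if (R3) holds, which is (c).

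Part (b) is then immediate from (c): when $B$ is a right double extension, Definition \ref{xxdefn1.3}(aiii) makes $B$ a free left $A$-module on $\{y_1^{n_1}y_2^{n_2}\}$, so the normal monomials are left $A$-independent. Both resolutions compute the same element $y_2y_1 r\in B$, so $E_1=E_2$ as elements, and independence forces equality of all coefficients, i.e. the condition of (c); therefore \eqref{R3.1}--\eqref{R3.6} hold. Part (d) is by inspection: setting $\delta=0$ (so $\sigma_{10}=\sigma_{20}=0$) and $\tau=0$ kills every term of \eqref{R3.4}--\eqref{R3.6}, since each such term carries a factor $\sigma_{i0}$ or $\tau_l$, reducing them to $0=0$; whereas no $\sigma_{i0}$ or $\tau_l$ appears in \eqref{R3.1}--\eqref{R3.3}, which are therefore unchanged.

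The main obstacle is not conceptual but the volume of bookkeeping in (a) and (c): one must correctly account for the two contributions to each of the six coefficients (the direct $y_my_l$ term and the indirect one coming from rewriting $y_2y_1$ via \eqref{R1}), and in the $E_1$ computation weight the (a)-outputs for $(i,j)=(1,2)$ and $(1,1)$ by $p_{12}$ and $p_{11}$ and add the lower-order contributions from $\tau_1 y_1 r+\tau_2 y_2 r+\tau_0 r$. The relations \eqref{R3.4}--\eqref{R3.6}, which track the $\delta$ and $\tau$ data, carry the most terms and are where an arithmetic slip is most likely. The subtle conceptual distinction to respect is that (c) asserts a formal coefficient-wise identity of two normal forms, while (b) upgrades ``equal as elements of $B$'' to that formal identity using freeness.
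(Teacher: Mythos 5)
Your proposal is correct and follows essentially the same route as the paper: apply (R2') twice and normalize the single bad product $y_2y_1$ via (R1) to get (a), then compare the coefficient of each normal monomial in the two resolutions of $y_2y_1r$ (which are exactly the paper's displays (E1.10.2) and (E1.10.3)) to obtain (c), with (b) following from the left $A$-freeness of the basis $\{y_1^{n_1}y_2^{n_2}\}$ and (d) by inspection. The only cosmetic difference is that you derive (b) from (c) whereas the paper proves them in one stroke.
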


\begin{proof} The proof is based on direct
computations.

(a) We use the relation \eqref{R2}:
$$y_j r=\sigma_{j1}(r)y_1+\sigma_{j2}(r) y_2
+\sigma_{j0}(r)$$
for $j=1,2$ and all $r\in A$. Then
\begin{equation}
\label{E1.10.1}
y_i(y_jr)=(y_i\sigma_{j1}(r))y_1+(y_i\sigma_{j2}(r))y_2
+
(y_i\sigma_{j0}(r)).
\tag{E1.10.1}
\end{equation}
For each $y_i\sigma_{js}(r)$ we use 
relation \eqref{R2} to have
$$y_i\sigma_{js}(r)=\sigma_{i1}(\sigma_{js}(r))y_1
+\sigma_{i2}(\sigma_{js}(r))
y_2+\sigma_{i0}(\sigma_{js}(r)).$$
Input this formula for $s=1,2$ and $0$ into equation
\eqref{E1.10.1}, 
we obtain 
\begin{align}
y_i(y_jr)&=[\sigma_{i1}(\sigma_{j1}(r))y_1
+\sigma_{i2}(\sigma_{j1}(r))
y_2+\sigma_{i0}(\sigma_{j1}(r))] y_1
\notag\\
&+[\sigma_{i1}(\sigma_{j2}(r))y_1
+\sigma_{i2}(\sigma_{j2}(r))
y_2+\sigma_{i0}(\sigma_{j2}(r))]y_2
\notag\\
&+[\sigma_{i1}(\sigma_{j0}(r))y_1
+\sigma_{i2}(\sigma_{j0}(r))
y_2+\sigma_{i0}(\sigma_{j0}(r))]
\notag
\end{align}
Then simplify this expression and use the relation
\eqref{R1} 
to obtain the desired formula.

(b,c) By associativity $(y_2y_1)r=y_2(y_1r)$. Hence
there are two ways to resolve $y_2y_1r$ into a
linear combination of lower terms if we define
$\deg y_2>\deg y_1>\deg r$ for all $r\in A$. The 
first way is to use (R1) to commute $y_2$ with 
$y_1$ then use (R2). The second way is to 
use (R2) to commute $y_1$ with $r$ first.

Starting with the relation \eqref{R1}:
$$y_2y_1=p_{12}y_1y_2+p_{11}y_1^2+\tau_{1}y_1+\tau_{2}y_{2}+\tau_{0}:=T$$
and for every $r\in A$, we have $(y_2y_1) r=Tr$ and use
the formula 
proved in part (a) to move $r$ from right to left by
``commuting'' $r$ with $y_iy_j$ and $y_i$. Hence
\begin{align}
\label{E1.10.2}
T r
&=(p_{12}y_1y_2+p_{11}y_1^2+\tau_{1}y_1+\tau_{2}y_{2}+\tau_{0})r
\tag{E1.10.2}\\
& = p_{12}(y_1y_2 r)+p_{11}(y_1^2 r)+\tau_{1}(y_1
r)+\tau_{2}(y_{2} r)+
\tau_{0} r
\notag\\
& = p_{12}\{[\sigma_{11}(\sigma_{21}(r))+p_{11}
\sigma_{12}(\sigma_{21}(r))]y_1^2
\notag\\
& \qquad\quad + \sigma_{12}(\sigma_{22}(r)) y_2^2 
\notag \\
& \qquad\quad +[\sigma_{11}(\sigma_{22}(r))+p_{12} 
\sigma_{12}(\sigma_{21}(r))]y_1y_2
\notag \\
&\qquad\quad
+[\sigma_{10}(\sigma_{21}(r))+\sigma_{11}(\sigma_{20}(r))+
\tau_{1}\sigma_{12}(\sigma_{21}(r))] y_1
\notag \\
&\qquad\quad
+[\sigma_{10}(\sigma_{22}(r))+\sigma_{12}(\sigma_{20}(r))+
\tau_{2}\sigma_{12}(\sigma_{21}(r))] y_2
\notag \\
&\qquad\quad 
+[\sigma_{10}(\sigma_{20}(r))+\tau_{0}\sigma_{12}(\sigma_{21}(r))]\}
\notag\\
&+p_{11}\{
[\sigma_{11}(\sigma_{11}(r))+p_{11}\sigma_{12}(\sigma_{11}(r))]y_1^2
\notag \\
&\qquad\quad + \sigma_{12}(\sigma_{12}(r)) y_2^2 
\notag \\
&\qquad\quad +[\sigma_{11}(\sigma_{12}(r))+p_{12} 
\sigma_{12}(\sigma_{11}(r))]y_1y_2
\notag\\
&\qquad\quad
+[\sigma_{10}(\sigma_{11}(r))+\sigma_{11}(\sigma_{10}(r))+
\tau_{1}\sigma_{12}(\sigma_{11}(r))] y_1
\notag\\
&\qquad\quad
+[\sigma_{10}(\sigma_{12}(r))+\sigma_{12}(\sigma_{10}(r))+
\tau_{2}\sigma_{12}(\sigma_{11}(r))] y_2
\notag\\ 
&\qquad\quad 
+[\sigma_{10}(\sigma_{10}(r))+\tau_{0}\sigma_{12}(\sigma_{11}(r))]\}
\notag\\
&+\tau_{1}(\sigma_{11}(r)y_1+\sigma_{12}(r)y_2+\sigma_{10}(r))
\notag\\
&+\tau_{2}(\sigma_{21}(r)y_1+\sigma_{22}(r)y_2+\sigma_{20}(r))
\notag\\
&+\tau_{0}r.
\notag
\end{align}

If using the second way by working out $y_1r$ first, we obtain
\begin{align}
\label{E1.10.3}
y_2(y_1 r)=&
[\sigma_{21}(\sigma_{11}(r))+p_{11}\sigma_{22}(\sigma_{11}(r))]y_1^2
\tag{E1.10.3}\\
\notag &+ \sigma_{22}(\sigma_{12}(r)) y_2^2 \\
\notag &+[\sigma_{21}(\sigma_{12}(r))+p_{12} 
\sigma_{22}(\sigma_{11}(r))]y_1y_2\\
\notag
&+[\sigma_{20}(\sigma_{11}(r))+\sigma_{21}(\sigma_{10}(r))+
\tau_{1}\sigma_{22}(\sigma_{11}(r))] y_1\\
\notag
&+[\sigma_{20}(\sigma_{12}(r))+\sigma_{22}(\sigma_{10}(r))+
\tau_{2}\sigma_{22}(\sigma_{11}(r))] y_2\\
\notag
&+[\sigma_{20}(\sigma_{10}(r))+\tau_{0}\sigma_{22}(\sigma_{11}(r))].
\end{align}
Comparing the coefficients of $y_1^2, y_1y_2, y_2^2,
y_1, y_2$ and $1$
respectively in equations \eqref{E1.10.2} and
\eqref{E1.10.3}, 
one sees the six relations (R3.1-R3.6). If $B$ is a right double 
extension, then $B$ is a free left $A$-module with a basis 
$\{y_1^{n_1} y_2^{n_2}\}_{n_1,n_2\geq 0}$. Thus (R3.1-R3.6) hold 
and assertion (b) follows.

If six relations (R3.1-R3.6) holds, then two
ways of resolving $y_2y_1r$ into two elements in $\sum_{n_1,n_2}
Ay_1^{n_1}y_2^{n_2}$ are the same. Thus the six relations 
(R3.1-R3.6) hold if and only if the right-hand sides of 
\eqref{E1.10.2} and \eqref{E1.10.3} are the same. Assertion
(c) follows. 

(d) Straightforward. 
\end{proof}

\begin{proposition}
\label{xxprop1.11} 
Suppose $\{P,\sigma,\delta,\tau\}$ be DE-data such that
$\sigma: A\to M_2(A)$ is a homomorphism and  
$\delta: A\to A^{\oplus 2}$ is a $\sigma$-derivation and that 
$P=\{p_{12},p_{11}\}$ and $\tau=\{\tau_{1},\tau_{2},\tau_{0}\}$. 
Assume \textup{(R3)} holds for all $r\in X$ where $X$ is a set 
of generators of $A$. Let $C$ be the algebra generated by $A$ and 
$y_1,y_2$ subject to the relations \textup{(R1)} and 
\textup{(R2)} for generators $r\in X$. Then $C$ is a right double  
extension of $A$. As a consequence, $C$ is a left free $A$-module 
with basis $\{y_1^{n_1}y_2^{n_2}\;|\; n_1,n_2\geq 0\}$.
\end{proposition}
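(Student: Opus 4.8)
The plan is to verify the four conditions (ai)--(aiv) of Definition \ref{xxdefn1.3}(a) for $C$. Conditions (ai) and (aii) hold by construction. The key preliminary remark is that, although (R2) is imposed in $C$ only for the generators $r\in X$, it propagates to all of $A$: exactly as in the proof of Lemma \ref{xxlem1.7}(c), the set of $r\in A$ for which (R2) holds in $C$ is a $k$-subspace closed under multiplication (this uses only that $\sigma$ is an algebra homomorphism and $\delta$ a $\sigma$-derivation), hence equals $A$. This at once gives (aiv), and combined with (R1) it shows every element of $C$ can be rewritten as a left $A$-combination of the monomials $y_1^{n_1}y_2^{n_2}$, i.e.\ $C=\sum_{n_1,n_2\ge 0}Ay_1^{n_1}y_2^{n_2}$. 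So the whole statement reduces to condition (aiii), \emph{left freeness}: the spanning monomials must be shown to be $A$-linearly independent.

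To prove independence I would build a representation of $C$ on the ``intended'' module. Let $\mathcal{B}=\bigoplus_{n_1,n_2\ge 0}Ay_1^{n_1}y_2^{n_2}$ be the free left $A$-module on the symbols $y_1^{n_1}y_2^{n_2}$, let $\lambda_a\in\End_k(\mathcal{B})$ be left multiplication by $a$, and define $E_1,E_2\in\End_k(\mathcal{B})$ imitating left multiplication by $y_1,y_2$: $E_1$ prepends $y_1$ (the result is already a normal monomial), while $E_2$ prepends $y_2$ and normalizes via (R1), a well-founded recursion since each use of (R1) strictly lowers the $y_1$-degree standing to the left of the inserted $y_2$. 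On coefficients one sets $E_i(a\cdot m)=\sigma_{i1}(a)E_1(m)+\sigma_{i2}(a)E_2(m)+\delta_i(a)m$ for monomials $m$, extended $k$-linearly. If $\lambda,E_1,E_2$ satisfy the defining relations of $C$, there is an algebra map $\rho\colon C\to\End_k(\mathcal{B})$ with $\rho(a)=\lambda_a$, $\rho(y_i)=E_i$; then the orbit map $c\mapsto\rho(c)(1)$ sends $y_1^{n_1}y_2^{n_2}\in C$ to the basis element $y_1^{n_1}y_2^{n_2}\in\mathcal{B}$, so any left $A$-combination of the monomials vanishing in $C$ maps to a vanishing combination of a basis of $\mathcal{B}$, forcing all coefficients to be $0$. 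This yields (aiii) and the consequence simultaneously. (This is the Diamond-Lemma confluence check repackaged, which has the advantage of needing no compatible ordering and so covering the ungraded case.)

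It remains to check the relations for $\lambda,E_1,E_2$. The relations of $A$ hold because $\lambda$ is the left regular representation; (R2) holds by the very definition of $E_i$ on coefficients, the $E_pE_l$-coefficients matching because $\sigma$ is a homomorphism and the lower terms because $\delta$ is a $\sigma$-derivation (a short computation giving (R2) for \emph{all} $a\in A$). The crux is (R1), i.e.\ the operator identity
$$R:=E_2E_1-p_{12}E_1E_2-p_{11}E_1^2-\lambda_{\tau_1}E_1-\lambda_{\tau_2}E_2-\lambda_{\tau_0}=0.$$
On a pure monomial $R$ vanishes by the very recursion defining $E_2$. On $a\cdot m$, expanding both resolutions is precisely the computation of Lemma \ref{xxlem1.10}: the difference between \eqref{E1.10.2} and \eqref{E1.10.3} shows $R(a\cdot m)$ is governed by the six quantities ``LHS minus RHS'' of (R3.1)--(R3.6) evaluated at $a$. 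Thus $R(a\cdot m)=0$ for all $m$ if and only if (R3) holds for $a$.

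The main obstacle is therefore that (R3) is assumed only on $X$, while $R=0$ needs it on all of $A$; I would close this by propagation parallel to Lemma \ref{xxlem1.7}(c). Let $T=\{a\in A:\ R(a\cdot m)=0\ \text{for all }m\}$, equivalently the locus of (R3); it is a $k$-subspace containing $1$ and, by hypothesis, $X$. Pushing $\lambda_r$ leftward through the two $E$'s in $R$ using the already-established (R2), a direct computation shows that when (R3) holds for $r$ one has the clean commutation rule $R\lambda_r=\lambda_{\det\sigma(r)}R$; the endomorphism $\det\sigma$ arises exactly as the factor produced by the top ($E$-degree two) part of $E_2E_1-p_{12}E_1E_2-p_{11}E_1^2$ (cf.\ the defining formula for $\det\sigma$ and Proposition \ref{xxprop0.3}(a)). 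Granting this, for $r,s\in T$ we get $R\lambda_{rs}=R\lambda_r\lambda_s=\lambda_{\det\sigma(r)}R\lambda_s=\lambda_{\det\sigma(r)}\lambda_{\det\sigma(s)}R=\lambda_{\det\sigma(rs)}R$, using that $\det\sigma$ is an algebra endomorphism (Proposition \ref{xxprop0.3}(a)); evaluating on a pure monomial and using $R=0$ there gives $R(rs\cdot m)=0$, so $rs\in T$. Hence $T$ is a subalgebra containing $X$, so $T=A$, $R=0$, and $\rho$ exists. The genuinely laborious point is verifying the commutation rule in its lower ($E$-degree one and zero) terms, which repackages all of (R3.1)--(R3.6) rather than (R3.1) alone.
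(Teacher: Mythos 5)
Your proposal is essentially correct and takes a genuinely different route from the paper. The paper proves this proposition by a direct application of Bergman's diamond lemma: it fixes a compatible well-founded ordering on monomials in $X\cup\{y_1,y_2\}$, takes the reduction system consisting of a reduction system $S$ for $A$ together with (R1) and (R2)(X), and checks that the only new overlap ambiguities, $y_ix_jf$ and $y_2y_1x_j$, are resolvable --- the first using that $\widehat\sigma$ is a homomorphism, the second using (R3)(X) via Lemma \ref{xxlem1.10}(c). A key payoff of that organization is that (R3) never needs to be propagated beyond the generating set: the diamond lemma only asks for confluence at overlaps of leading terms, and the validity of (R3) on all of $A$ then falls out as a corollary (this is exactly Remark \ref{xxrem1.12}). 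You instead build the representation on the intended free module and must therefore verify the operator identity $R=0$ globally, which forces you to propagate (R3) from $X$ to $A$ by hand. Your mechanism for doing so --- the commutation rule $R\lambda_r=\lambda_{\det\sigma(r)}R$ whenever (R3) holds at $r$ --- is correct and checks out: the coefficient of $E_2E_1$ in $R\lambda_r$ is precisely $\det\sigma(r)$ as defined in Section 2, the remaining six coefficients of $R\lambda_r-\lambda_{\det\sigma(r)}R$ are exactly the differences in (R3.1)--(R3.6), and evaluating $R\lambda_{rs}=\lambda_{\det\sigma(r)\det\sigma(s)}R$ at the monomial $1$ gives (R3) at $rs$. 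This is an attractive argument that also explains, in advance, why $\det\sigma$ appears as the twist in the sequence \eqref{E2.2.1}. One caution: you should not cite Proposition \ref{xxprop0.3}(a) for the multiplicativity of $\det\sigma$ here, since that proposition is proved for an algebra already known to be a right double extension (i.e.\ with (R3) valid on all of $A$); as written the citation is circular. Fortunately your own chain of identities re-derives $\det\sigma(rs)=\det\sigma(r)\det\sigma(s)$ for $r,s\in T$ by comparing the $E_2E_1$-coefficients, so the dependence is removable.

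The one genuine gap is the well-definedness of $E_2$, which you dispatch in a parenthesis while claiming the method ``needs no compatible ordering.'' It does. The recursion normalizing $y_2y_1^{N}y_2^{M}$ does not only apply (R1): the tails $\tau_i\in A$ and, after prepending $y_1$ to the partially normalized result, the coefficients produced by (R2) deposit elements of $A$ in the interior of words, and moving them leftward re-creates configurations $y_2\cdot(\text{monomial})$ on which $E_2$ must be called again --- including on monomials whose $y_1$-degree is \emph{not} smaller than the one you started with (e.g.\ the term $p_{11}y_1^{N+1}y_2^M$ inside $E_2(y_1^{N}y_2^{M})$). Your stated induction on ``the $y_1$-degree standing to the left of the inserted $y_2$'' therefore does not terminate the recursion by itself; one needs a finer well-founded measure (total $y$-degree first, distinguishing scalar from non-scalar coefficients, then the $y_1$-count), which is precisely the bookkeeping the paper's ordering on $\langle X\cup\{y_1,y_2\}\rangle$ is designed to encode. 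This is repairable, but as written it is the weakest link: without a correct termination argument the operators $E_1,E_2$, and hence the representation $\rho$, are not yet defined.
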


\begin{remark}
\label{xxrem1.12}
Assume that the relations \textup{(R3)} hold for a set 
of generators of $A$. As a consequence of Proposition 
\ref{xxprop1.11}, one can form a right double extension $C$ 
and hence \textup{(R3)} holds for all $r\in A$. This 
fact is not easy to verify by a direct computation.
\end{remark}

\begin{proof}[Proof of Proposition \ref{xxprop1.11}]
Let $X:=\{x_i\}_{i\in {\mathcal O}}$ be a set of generators of
$A$ where ${\mathcal O}$ is the index set of consisting 
the first $|X|$-ordinal numbers. We write $A=k\langle X\rangle/I_A$ 
for some ideal $I_A$ of the free algebra $k\langle X\rangle$.
Further we assume that elements in $X\cup \{1\}$ are linearly 
independent. We fix a semigroup total ordering $\leq $ on the 
monomial set $\langle X\rangle$ generated by $X$ as follows. 
Let $H=x_{i_1}\cdots x_{i_m}$ and $G=x_{j_1}\cdots
x_{j_n}$ be two 
monomials. We write $H< G$ if either $m<n$ or $m=n$
and there 
is an $s\leq n$ such that $i_s<j_s$ and $i_t=j_t$ for
all $t>s$.
In particular, we have
$$1<x_1<x_2<\cdots.$$
Since ${\mathcal O}$ is well-ordered, $\leq $ satisfies
the descending chain condition.

By Bergman's diamond lemma \cite[Section 1]{Be}, there is a
reduction system $S$ associated to $I_A$,
which is a set of relations $\{f_n\}$, such that all
ambiguity between $f_n$'s can 
be resolved. Each relation $f_n$ is of the form
$$f^l_n-H_n=0$$
where the leading term $f^l_n$ is a monomial and $H_n$
is a $k$-linear combination of monomials of 
lower order and not appeared as the
leading term of other relations in the reduction
system $S$. Let $Basis_A$ be the set of monomials that 
do not contain $f^l_n$ as a submonomial for all 
$f_n\in S$ (in \cite{Be} $Basis_A$ is denoted by 
$k\langle X\rangle_{irr}$). By \cite[Theorem 1.2]{Be}, 
$Basis_A$ is a $k$-linear basis of $A$. 

Now we define a total ordering $\leq$ on the set of 
monomials $\langle X\cup\{y_1,y_2\} \rangle$ 
generated by the set of generators 
$X\cup \{y_1,y_2\}$ of $C$. 
Let $I=F_m y_{i_m}F_{m-1} y_{i_{m-1}}\cdots
F_1 y_{i_1} F_{0}$
and $J=G_n y_{j_n}G_{n-1} y_{j_{n-1}}\cdots G_1
y_{j_1} G_{0}$ 
be two monomials where $F_s$ and $G_t$ are 
monomials in $\langle X\rangle$ for all $s$ and 
$t$. We write $I<J$ if either 
\begin{enumerate}
\item
$m<n$, or 
\item
$m=n$ and there is an $s$ such that
$F_t=G_t$ and 
$y_{i_t}=y_{j_t}$ for all $t<s$ and $y_{i_s}<y_{j_s}$,
or 
\item
$m=n$ and there is an $s$ such that $F_{t-1}=G_{t-1}$
and 
$y_{i_t}=y_{j_t}$ for all $t<s$ and $F_{s-1}<G_{s-1}$.
\end{enumerate}
This defines a total ordering on the
monomial set $\langle X\cup\{y_1,y_2\}\rangle$ which extends 
the ordering on $\langle X\rangle$; and this ordering is 
preserved by the semigroup multiplication and satisfies 
the descending chain condition. Let $T$ be the 
reduction system of $k\langle X\cup\{y_1,y_2\}\rangle$ 
consisting $S$, (R1) and (R2)(X) where (R2)(X) denotes the 
set of relations of form (R2) for all $r$ in the generating 
set $X$. By definition, every element in $T$ is a relation 
of the algebra $C$ when identifying $C=k\langle 
X\cup\{y_1,y_2\}\rangle/I_C$. By Lemma \ref{xxlem1.7}(c), (R2) 
holds for all $r\in A$ if and only if (R2)(X) holds. We claim 
that all ambiguities of $T$ are resolvable in the sense of 
\cite{Be}. All ambiguities of $S$ are resolvable by the choice 
of $S$. So we only need to consider ambiguities of the relations 
in $T$ which involving $y_i$. First we consider an ambiguity
involving (R2)(X) and $S$. Namely, we pick a relation from 
(R2)(X), say 
$$y_i x_j=\sigma_{i1}(x_j) y_1+\sigma_{i2}(x_j)
y_2+\sigma_{i0}(x_j)$$
for $i$ being either $1$ or $2$ and for some $j$; 
and pick another relation from $S$, say
$$F=G$$ 
where $F=x_j f$ and where $G$ is a linear 
combination of monomials lower than $F$. We want 
to show the following: two different ways of
reducing $y_iF$ (starting from $(y_ix_j)f$
or from $y_i (x_j f)$)
have the same result. We need to use the fact
$\widehat{\sigma}$ is an algebra
homomorphism and induction on the order of $F$. Again
let $F=x_j f$.
Since the degree of $f$ is less than the degree of
$F$, by induction on the degree of $F$,
there is a unique way of expressing $y_s f$ that is
$$y_s
f=\sigma_{s1}(f)y_1+\sigma_{s2}(f)y_2+\sigma_{s0}(f).$$
So the first way of reducing $y_iF$ is 
\begin{align} 
(y_i x_j)f&=(\sigma_{i1}(x_j) y_1+\sigma_{i2}(x_j)
y_2+\sigma_{i0}(x_j))f
\notag
\\
&=\sum_{s=0}^2
\sigma_{is}(x_j)(\sum_{k=0}^2\sigma_{sk}(f)y_k)
\notag\\
&=\sum_{k,s=0}^2\sigma_{is}(x_j)\sigma_{sk}(f) y_k
\notag\\
&=\sum_{k=0}^2 \sigma_{ik}(x_j f) y_k
\notag\\
&=\sum_{k=0}^2 \sigma_{ik}(F) y_k.
\notag
\end{align}
The second way of reducing $y_iF$ is 
$$y_iF=y_i G=\sigma_{i1}(G) y_1+\sigma_{i2}(G)
y_2+\sigma_{i0}(G)
=\sum_{k=0}^2 \sigma_{ik}(F) y_k$$
where the first equality follows from $F=G$ where
$G$ is a linear combination of lower monomials than $F$
and where the last equality holds because
$\widehat{\sigma}(F)=
\widehat{\sigma}(G)$ in $A$. So we proved our claim.
In particular, the ambiguity of $y_ix_j f$ can be resolved. 
There is another kind of ambiguities we need to resolve, namely,
$y_2y_1x_j$ for all $j$. By hypothesis, (R3)(X) holds where 
(R3)(X) denotes the set of relations of the form (R3.1-6) when 
applying to $r$ in the generating set $X$. By Lemma \ref{xxlem1.10}(c),
these ambiguities can be resolved. So we have proved that 
all ambiguities between elements in $T$ can be resolved.

Let $Basis_C$ be the set of monomials that do not contain any 
submonomial that is a leading term of elements in $T$. By Bergman's 
diamond lemma \cite[Theorem 1.2]{Be}, $Basis_C$ is a $k$-linear 
basis of $C$. 
The main assertion in Proposition \ref{xxprop1.11} follows if the 
following claim holds: $Basis_C$ is equal to the set 
$B'=\bigcup_{n_1,n_2\geq 0} Basis_A \cdot y_1^{n_1}y_2^{n_2}$. 
We show this in the next paragraph.

Recall that $T$ consists of (R1), (R2)(X) and $S$ where $S$ is a 
reduction system of $A$. Let $f\in Basis_A$, namely, $f$ does not 
contain any submonomial that is a leading term of the elements 
in $S$. Since the leading terms of (R1) and (R2)(X) are $y_2y_1$ and
$y_ix_j$, $f y_1^{n_1}y_2^{n_2}$ does not contain any submonomial that
is a leading term of the elements in $T$. This shows that $B'$
is a subset of $Basis_C$. Conversely, let $F =F_my_{i_{m}}F_{m-1}
\cdots F_1y_{i_1}$ be a monomial that does not contain 
any submonomial that is a leading term of $T$ where $F_i\in 
\langle X\rangle$. Since $F$ does not contain $y_ix_j$ (see
the relation (R2)(X)), all $F_i=1$ for all $i<m$. Thus 
$F=F_m y_{1}^{s_1}y_{2}^{t_1} \cdots y_{1}^{s_p}y_{2}^{t_p}$. Since 
$F$ does not contain $y_2y_1$ (see the relation (R1)), $F=F_m y_1^sy_2^t$.
Finally $F_m\in Basis_A$. Therefore the claim is proved.
\end{proof}

In some cases a right double extension is automatically 
a left double extension. Here is an example. 

\begin{proposition}
\label{xxprop1.13} Let $A$ be a connected graded algebra
and $B=A_{P}[y_1,y_2;\sigma,\delta,\tau]$ be a 
connected graded right double extension. Suppose that 
$p_{12}\neq 0$ and $\sigma$ is invertible. Then $B$
is a double extension.
\end{proposition}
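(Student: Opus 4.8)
The conditions (bi), (bii) and (biv) of Definition~\ref{xxdefn1.3} are quick, and the real content is (biii). First, (bi) is identical to (ai). Condition (biv) is immediate from Lemma~\ref{xxlem1.9}: since $\sigma$ is invertible, $Ay_1+Ay_2+A=y_1A+y_2A+A$, which in particular gives $Ay_1+Ay_2\subseteq y_1A+y_2A+A$. For (bii) I use $p_{12}\neq 0$ to solve \eqref{R1} for $y_1y_2$, and then use \eqref{E1.9.1} to move the tail terms $\tau_1y_1$ and $\tau_2y_2$ to the right of $y_1,y_2$. Writing $ry_1=y_1\phi_{11}(r)+y_2\phi_{21}(r)+\delta_1'(r)$ and $ry_2=y_1\phi_{12}(r)+y_2\phi_{22}(r)+\delta_2'(r)$, one obtains a relation of the form \eqref{L1} with $p_{12}'=p_{12}^{-1}$, $p_{11}'=-p_{12}^{-1}p_{11}$, and with $\tau_1',\tau_2',\tau_0'$ given explicitly in terms of $\phi$, $\delta'$ and $\tau$. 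In particular $p_{12}p_{12}'=1$, in agreement with Remark~\ref{xxrem1.4}(d).

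It remains to prove (biii), namely that $B$ is a free right $A$-module with basis $\{y_2^{n_1}y_1^{n_2}\}$. Let $B'=\sum_{n_1,n_2\geq 0}y_2^{n_1}y_1^{n_2}A$ be the right $A$-submodule generated by these monomials, let $F=\bigoplus_{n_1,n_2\geq 0}e_{n_1n_2}A$ be the free graded right $A$-module with $\deg e_{n_1n_2}=n_1\deg y_2+n_2\deg y_1$, and let $\psi\colon F\to B$ be the graded right $A$-module map sending $e_{n_1n_2}\mapsto y_2^{n_1}y_1^{n_2}$. The plan is to prove $\psi$ is surjective (generation) and then deduce it is bijective by comparing Hilbert series. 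Because $A$ is connected graded (hence locally finite) and $\deg y_1,\deg y_2>0$, the algebra $B$ is locally finite, and since $B$ is free as a left $A$-module on $\{y_1^{n_1}y_2^{n_2}\}$ we have $H_B(t)=H_A(t)\,(1-t^{\deg y_1})^{-1}(1-t^{\deg y_2})^{-1}$. The free module $F$ has exactly the same Hilbert series. Hence once $\psi$ is surjective, a dimension count in each degree forces $\psi$ to be an isomorphism, so $B$ is right free on $\{y_2^{n_1}y_1^{n_2}\}$, proving (biii).

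The surjectivity of $\psi$, i.e.\ $B'=B$, is the main point. Since $B=\sum Ay_1^{n_1}y_2^{n_2}$ on the left, it suffices to rewrite every monomial $a\,y_1^{n_1}y_2^{n_2}$ into the normal form $\sum y_2^{m_1}y_1^{m_2}a'$. Two moves are available: \eqref{E1.9.1} pushes an element of $A$ to the right past a single $y_i$ (creating terms of lower $y$-length), and \eqref{L1} replaces an occurrence of $y_1y_2$ by $p_{12}'y_2y_1+p_{11}'y_1^2$ plus terms of strictly smaller $y$-length. One first pushes all $A$-coefficients to the far right and then sorts the resulting words in $y_1,y_2$ into the form $y_2^{m_1}y_1^{m_2}$. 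I expect the termination of this sorting to be the main obstacle, because \eqref{L1} both reintroduces $A$-coefficients in the interior (through $y_1\tau_1'$ and $y_2\tau_2'$) and produces a $y_1^2$ term that preserves the $y$-length, so a naive inversion count does not decrease. This is controlled by the potential $c(w)=\sum_{i\,:\,w_i=y_2}i$, the sum of the positions of the letters $y_2$ in a word $w$: both substitutions $y_1y_2\mapsto y_2y_1$ and $y_1y_2\mapsto y_1^2$ strictly decrease $c$, while the lower $y$-length terms are handled by induction on $y$-length and the pushing of $A$ to the right is controlled by a separate count. Thus the rewriting terminates in the desired normal form and $B'=B$. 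Alternatively, (biii) is equivalent to the assertion that $B^{\mathrm{op}}$ is a right double extension of $A^{\mathrm{op}}$, whose DE-data are furnished by the inverse $\phi$ of $\sigma$ via \eqref{E1.9.1}; Proposition~\ref{xxprop1.11} then produces a right double extension $C$ over $A^{\mathrm{op}}$ with a surjection $C\twoheadrightarrow B^{\mathrm{op}}$, which is an isomorphism by the same Hilbert series comparison.
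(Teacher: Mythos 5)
Your proof is correct and follows essentially the same route as the paper's: (bi), (bii) and (biv) are obtained from \eqref{E1.9.1} together with $p_{12}\neq 0$, and (biii) is proved by showing the monomials $y_2^{n_1}y_1^{n_2}$ span $B$ as a right $A$-module and then comparing Hilbert series with the left-module basis. The only difference is that you spell out the termination of the rewriting underlying $B=\sum_{n_1,n_2} y_2^{n_1}y_1^{n_2}A$, a step the paper asserts in a single line.
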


\begin{proof} Since $\sigma$ is invertible with inverse
$\phi$, we have \eqref{E1.9.1}
$$
 r (y_1,y_2)= (y_1,y_2) \begin{pmatrix} \phi_{11}(r)& \phi_{12}(r)\\
\phi_{21}(r)&\phi_{22}(r)\end{pmatrix} 
+\begin{pmatrix} \delta'_1(r)\\ \delta'_2(r)
\end{pmatrix}
$$
for some $\delta'_1$ and $\delta'_2$.
\eqref{E1.9.1} implies that $Ay_1+Ay_2\subset y_1A+y_2A+A$.
Hence Definition \ref{xxdefn1.3}(biv) holds.

Since $p_{12}\neq 0$, (R1) is equivalent to
\begin{equation}
\label{E1.13.1}
y_1y_2=p_{12}^{-1}y_2y_1+(-p_{12}p_{11})y_1^2+
y_1\tau_{1}'+y_2\tau_{2}'+\tau_{0}'
\tag{E1.13.1}
\end{equation}
for some $\{\tau_{1}',\tau_{2}',\tau_{0}'\}$. 
Hence Definition \ref{xxdefn1.3}(bii) (namely, \eqref{L1}) 
holds. Definition \ref{xxdefn1.3}(bi) is automatic. 

It remains to show Definition \ref{xxdefn1.3}(biii), namely, 
$B$ is a right free $A$-module with 
a basis $\{y_2^{n_1}y_{1}^{n_2}\}_{n_1,n_2\geq 0}$.
Since $B$ is a right double extension, it is a free
left $A$-module with a basis $\{y_1^{n_1}y_2^{n_2}\}_{n_1,n_2\geq 0}$. Hence
the Hilbert series of $B$ is equal to the Hilbert series of
$A\otimes k[y_1,y_2]$, or
\begin{equation}
\label{E1.13.2}
H_B(t)=H_{A}(t) {\frac{1}{(1-t^{\deg y_1})(1-t^{\deg y_2})}}.
\tag{E1.13.2}
\end{equation}
By using \eqref{E1.9.1} and \eqref{E1.13.1} we have $B=\sum_{n_1,n_2}
y_2^{n_1}y_1^{n_2} A$. Since the Hilbert series of $B$ is 
equal to the Hilbert series of $k[y_1,y_2]\otimes A$,
$B$ is a free right $A$-module with basis 
$\{y_2^{n_1}y_1^{n_2}\}_{n_1,n_2\geq 0}$.
\end{proof}

We hope that Proposition \ref{xxprop1.13} holds
for general non-graded double extensions, but this
has not been proven yet.

Finally we like to remark that the condition $p_{12}\neq 0$ is
natural if we study noetherian algebras. 

\begin{proposition}
\label{xxprop1.14}
Let $B=A_{P}[y_1,y_2;\sigma,\delta,\tau]$ be a connected graded 
right double extension such that either $\deg y_1=\deg y_2>0$
or $\sigma(A_{\geq 1})\subset M_2(A_{\geq 1})$.
\begin{enumerate}
\item
$B/(A_{\geq 1})$ is isomorphic to $k\langle y_1,y_2\rangle/
(y_2y_1-p_{12}y_1y_2-p_{11}y_1^2)$.
\item
If $B$ is left or right noetherian, then $p_{12}\neq 0$.
\end{enumerate}
\end{proposition}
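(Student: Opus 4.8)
The plan is to prove part (a) first, since part (b) follows from it together with a standard fact about noetherianity passing to quotient rings. For part (a), I would identify the quotient $B/(A_{\geq 1})$ explicitly by tracking what the defining relations of $B$ become modulo the two-sided ideal generated by $A_{\geq 1}$. The key observation is that under either hypothesis on the degrees, the ideal $(A_{\geq 1})$ absorbs enough of the structure to collapse the relations (R2) and leave only the single relation (R1) with its tail killed. Concretely, in $\bar B := B/(A_{\geq 1})$ we have $\bar A = A/A_{\geq 1} = k$, so $\bar B$ is generated by the images $\bar y_1, \bar y_2$. I would then examine relation (R2): for $r \in A_{\geq 1}$, the left side $y_i r$ lies in the ideal $(A_{\geq 1})$, and I must check that the right side $\sigma_{i1}(r) y_1 + \sigma_{i2}(r) y_2 + \delta_i(r)$ also lies there, so that (R2) imposes no new constraint on $\bar y_1, \bar y_2$. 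This is exactly where the two hypotheses enter: either $\deg y_1 = \deg y_2 > 0$ forces the homogeneous components $\sigma_{ij}(r)$ and $\delta_i(r)$ to have positive degree in $A$ (hence lie in $A_{\geq 1}$) whenever $r \in A_{\geq 1}$, or the hypothesis $\sigma(A_{\geq 1}) \subset M_2(A_{\geq 1})$ directly guarantees $\sigma_{ij}(r) \in A_{\geq 1}$, and a parallel degree count handles $\delta_i(r)$.

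Having disposed of (R2), the only surviving relation is the image of (R1). Since the tail $\tau_1, \tau_2, \tau_0 \in A$ and by homogeneity $\tau_0 \in A_{\geq 1}$ while $\tau_1 y_1, \tau_2 y_2 \in (A_{\geq 1})$ (using $\tau_1, \tau_2 \in A_{\geq 1}$, which follows from the degree bookkeeping), the relation (R1) reduces modulo $(A_{\geq 1})$ to
$$
\bar y_2 \bar y_1 = p_{12} \bar y_1 \bar y_2 + p_{11} \bar y_1^2.
$$
To conclude that $\bar B$ is precisely $k\langle y_1, y_2\rangle/(y_2 y_1 - p_{12} y_1 y_2 - p_{11} y_1^2)$ and not a proper quotient of it, I would use the left free basis $\{y_1^{n_1} y_2^{n_2}\}$ of $B$ over $A$ from Definition 1.3(aiii): the images $\{\bar y_1^{n_1} \bar y_2^{n_2}\}$ span $\bar B$ over $k$, and a Hilbert series comparison (or a direct normal-form argument via the basis) shows these images are $k$-linearly independent, matching the Hilbert series of $k\langle y_1,y_2\rangle/(y_2y_1 - p_{12}y_1y_2 - p_{11}y_1^2)$. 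This pins down the isomorphism.

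For part (b), I would argue contrapositively. Suppose $p_{12} = 0$; then the algebra in part (a) becomes $C := k\langle y_1, y_2\rangle/(y_2 y_1 - p_{11} y_1^2)$. I would show $C$ is neither left nor right noetherian, for instance by exhibiting an infinite strictly ascending chain of one-sided ideals, or by observing that $C$ contains a free subalgebra or a non-noetherian monomial subalgebra. Since $(A_{\geq 1})$ is a two-sided ideal and a quotient of a one- or two-sided noetherian ring by a two-sided ideal is again noetherian on the corresponding side, if $B$ were left (resp.\ right) noetherian then $\bar B \cong C$ would be left (resp.\ right) noetherian, a contradiction. Hence $p_{12} \neq 0$.

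The main obstacle I anticipate is not in part (b) but in the degree bookkeeping of part (a): verifying cleanly that each of $\sigma_{ij}(A_{\geq 1})$, $\delta_i(A_{\geq 1})$, and the tail entries $\tau_1, \tau_2$ land in $A_{\geq 1}$ under the two alternative hypotheses. Under $\deg y_1 = \deg y_2$ this is a uniform degree argument, but one must be careful that the homogeneity of the relations (as required in the graded case of Definition 1.3) forces $\tau_1, \tau_2$ to have strictly positive degree and $\tau_0$ likewise; under the alternative hypothesis $\sigma(A_{\geq 1}) \subset M_2(A_{\geq 1})$ the $\sigma$-part is immediate but the $\delta$ and tail parts still require the homogeneity. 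Getting these containments right for both cases simultaneously is the delicate point; everything else is routine.
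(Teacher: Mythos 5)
Your proposal is correct and follows essentially the same route as the paper: reduce the first hypothesis to $\sigma(A_{\geq 1})\subset M_2(A_{\geq 1})$ by a degree count, show the two-sided ideal generated by $A_{\geq 1}$ equals $A_{\geq 1}B=\bigoplus A_{\geq 1}y_1^{n_1}y_2^{n_2}$ so that the quotient has $k$-basis $\{\bar y_1^{n_1}\bar y_2^{n_2}\}$ and is presented by the single relation $y_2y_1=p_{12}y_1y_2+p_{11}y_1^2$, and then observe that when $p_{12}=0$ this quotient (which is $k\langle y_1,y_2\rangle/((y_2-p_{11}y_1)y_1)$) is neither left nor right noetherian. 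The degree bookkeeping you flag as delicate does go through exactly as you describe, since homogeneity of (R1) and (R2) with $\deg y_i>0$ forces $\tau_1,\tau_2,\tau_0$ and $\delta_i(A)$ into $A_{\geq 1}$.
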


\begin{proof} If $\deg y_1=\deg y_2>0$, then by (R2) 
$\sigma(A_{\geq 1})\subset M_2(A_{\geq 1})$. For the rest
of the proof we assume that $\sigma(A_{\geq 1})\subset 
M_2(A_{\geq 1})$.

(a) By definition, $B=\bigoplus_{n_1,n_2\geq 0} 
Ay_1^{n_1}y_2^{n_2}$ is a free left $A$-module. Let $I$
be the left $A$-module $\bigoplus_{n_1,n_2\geq 0} 
A_{\geq 1}y_1^{n_1}y_2^{n_2}$, which is equal to the right 
ideal $A_{\geq 1}B$. Then $B/I$ is isomorphic to 
$\bigoplus_{n_1,n_2\geq 0} ky_1^{n_1}y_2^{n_2}$ as graded 
vector space. We claim that $I$ is a 
two-sided ideal of $B$. Clearly $AI\subset I$. By (R2) and 
the hypothesis of $\sigma(A_{\geq 1})\subset M_2(A_{\geq 1})$,
$y_iA_{\geq 1}\subset A_{\geq 1}y_1+A_{\geq 1}y_2+A_{\geq 1}$.
Since $B$ is generated by $A$ and $y_1,y_2$,  $I$
is a left and hence two-sided ideal of $B$. Further 
$I$ is the two-sided ideal of $B$ generated by $A_{\geq 1}$.
The factor ring $B/I$ is generated by $y_1$ and $y_2$ subject 
to one relation $y_2y_1=p_{12}y_1y_2+p_{11}y_1^2$. 

(b) If $p_{12}=0$, then $B/I$ is isomorphic to $k\langle 
y_1,y_2\rangle/((y_2-p_{11}y_1)y_1)$ which is neither left nor 
right noetherian. This yields a contradiction because
$B$ is left or right noetherian. Therefore $p_{12}\neq 0$.
\end{proof}

The condition $p_{12}\neq 0$ is also related to the Artin-Schelter 
regularity, see \cite[Section 2]{ZZ}. We will further discuss 
some homological properties of double extensions in Section 
\ref{xxsec3}.

\section{Determinant of $\sigma$}
\label{xxsec2}

In this section we study an invariant of $\sigma$, called the 
determinant of $\sigma$, which is somewhat related to the determinant 
of the quantum $2\times 2$ matrix. The property of the determinant
of $\sigma$ will be used to prove the regularity of double extensions
in the next section.

As before we write $\sigma$ as a matrix form 
$\begin{pmatrix} \sigma_{11}&\sigma_{12}\\
                 \sigma_{21}&\sigma_{22}
\end{pmatrix}$. It is natural to ask if there is
any kind of determinant of this ``matrix'' $\sigma$. Suppose $B=A_{P}[y_1,y_2;
\sigma,\delta,\tau]$ is a right double extension. We define
the {\it $P$-determinant} of $\sigma$ or just {\it determinant} 
of $\sigma$ to be the map 
$$\det \sigma: r\mapsto -p_{11}\sigma_{12}(\sigma_{11}(r))
+\sigma_{22}(\sigma_{11}(r))-p_{12}\sigma_{12}(\sigma_{21}(r))$$
for all $r\in A$. This is a $k$-linear map from $A$ to itself.
In the classical case of $P=(1,0)$, we can write 
$$\det \sigma=\sigma_{22}\circ \sigma_{11}-\sigma_{12}\circ
\sigma_{21}.$$
If $p_{12}\neq 0$, then by (R3.2) we obtain that
$$\det \sigma=-p_{12}^{-1}p_{11}\sigma_{11}\circ\sigma_{12}
 -p_{12}^{-1}\sigma_{21}\circ\sigma_{12}
+\sigma_{11}\circ\sigma_{22},$$
which implies that, if $P=(1,0)$,
$$\det \sigma=\sigma_{11}\circ\sigma_{22}-\sigma_{21}\circ
\sigma_{12}.$$
However, since $f\circ g\neq g\circ f$ generally in 
$\End_k(A)$, when $P=(p,0)$,
$$\det\sigma\neq \sigma_{22}\circ\sigma_{11}-p\sigma_{21}\circ
\sigma_{12},\quad \det\sigma
\neq \sigma_{11}\circ \sigma_{22}-p^{-1}\sigma_{12}\circ
\sigma_{21}.$$
See Proposition \ref{xxprop4.6}(c) for an example. 

\begin{proposition}
\label{xxprop2.1} Let $A_{P}[y_1,y_2;\sigma,\delta,\tau]$ be
a right double extension.
\begin{enumerate}
\item
$\det \sigma$ is an algebra endomorphism of $A$.
\item
If $\sigma$ is invertible, then $\det \sigma$ is invertible.
\item
Suppose $p_{12}\neq 0$. If $\det \sigma$ is invertible,
then $\sigma$ has a right inverse. If further $B$ is a 
connected graded right double extension, then $\sigma$ 
is invertible and $B$ is a double extension. 
\end{enumerate}
\end{proposition}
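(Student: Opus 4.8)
The plan is to route all three parts through a single auxiliary bimodule, reducing (a) and (b) to its formal structure and isolating (c) as the only genuine computation. Since $\det\sigma$ and the constraints (R3.1)--(R3.3) involve only $\sigma$ and $P$, and since being an algebra endomorphism is likewise a condition on $\sigma$ and $P$ alone, I would first replace $B$ by the trimmed right double extension $B_{0}=A_{P}[y_1,y_2;\sigma]$, which exists by Proposition \ref{xxprop1.11} because (R3.1)--(R3.3) hold for $(\sigma,P)$ by Lemma \ref{xxlem1.10}(b,d). In $B_{0}$ put $V=Ay_1+Ay_2$; since $\delta=\tau=0$ one has $y_ib\in V$ for all $b\in A$, so $V$ is a sub-bimodule, and it is left free of rank $2$ on $\{y_1,y_2\}$. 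Splitting the second tensor factor as a left module shows $V\otimes_A V$ is left free of rank $4$ on $\{y_i\otimes y_j\}$, and multiplication $\mu\colon V\otimes_A V\to W:=Ay_1^2+Ay_1y_2+Ay_2^2$ is a surjection of bimodules onto a left free rank-$3$ module.

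For (a), the vector $\kappa=y_2\otimes y_1-p_{12}\,y_1\otimes y_2-p_{11}\,y_1\otimes y_1$ together with $y_1\otimes y_1,\ y_1\otimes y_2,\ y_2\otimes y_2$ is again a left basis of $V\otimes_A V$, and $\mu(\kappa)=0$ by (R1); hence $\ker\mu=A\kappa$ is left free of rank $1$. Being the kernel of a bimodule map it is a sub-bimodule, so $\kappa\cdot b\in A\kappa$ for every $b$, say $\kappa\cdot b=\det\sigma(b)\,\kappa$. From the right action $(y_i\otimes y_j)\cdot b=\sum_{k,l}\sigma_{ik}(\sigma_{jl}(b))\,y_k\otimes y_l$ the coefficient of $y_2\otimes y_1$ in $\kappa\cdot b$ is exactly $-p_{11}\sigma_{12}(\sigma_{11}(b))+\sigma_{22}(\sigma_{11}(b))-p_{12}\sigma_{12}(\sigma_{21}(b))$, which identifies the scalar map with $\det\sigma$ (the remaining three coefficients are then forced to match, merely re-encoding (R3.1)--(R3.3)). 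The associativity and unit axioms of the bimodule $A\kappa$ now read $\det\sigma(bb')=\det\sigma(b)\det\sigma(b')$ and $\det\sigma(1)=1$, which is exactly (a).

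For (b), invertibility of $\sigma$ gives, via Lemma \ref{xxlem1.9} applied to the $y$-degree-one part, that $V$ is in addition right free of rank $2$. Running the count on the other side then makes $V\otimes_A V$ right free of rank $4$, $W$ right free of rank $3$, and $\ker\mu$ right free of rank $1$. Thus $\ker\mu$ is a bimodule free of rank one on each side whose right action is the $\det\sigma$-twist computed in (a); such a bimodule is right free of rank one precisely when its twisting endomorphism is an automorphism, so $\det\sigma$ is invertible.

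Part (c) carries the real content. With $p_{12}\neq0$ and $\det\sigma$ invertible I would construct a right inverse $\phi$ of $\sigma$ by solving the operator equations $\sum_{k}\sigma_{kj}\circ\phi_{ki}=\delta_{ij}\,\id$ of Definition \ref{xxdefn1.8} inside $M_2(\End_k A)$, expressing the entries $\phi_{ij}$ through the $\sigma_{ij}$ and $(\det\sigma)^{-1}$ by a $P$-deformed adjugate; the non-commutativity of composition in $\End_k(A)$ rules out the naive classical cofactor formula, and the relations (R3.1)--(R3.3) together with $p_{12}\neq0$ are exactly what is needed both to verify the four identities and to check that $\phi$ is itself an algebra homomorphism. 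I expect pinning down this deformed adjugate and matching it against (R3) to be the main obstacle. Granting $\phi$, the graded statement is soft: $\sigma$ and $\phi$ preserve degree, so on each finite-dimensional piece $A_n$ the relation $(\sigma_{ij})\bullet(\phi_{ij})=I$ lives in the finite-dimensional algebra $M_2(\End_k(A_n))$, where a right inverse is automatically two-sided; assembling over $n$ makes $\sigma$ invertible in the sense of Definition \ref{xxdefn1.8}, and Proposition \ref{xxprop1.13} (applicable since $p_{12}\neq0$) finally yields that $B$ is a double extension.
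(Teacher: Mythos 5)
Your part (a) is correct and is a genuinely different argument from the paper's: the paper verifies $\det\sigma(ra)=\det\sigma(r)\det\sigma(a)$ by a direct computation with (R3.1)--(R3.3) (and only writes out the case $P=(p,0)$, calling the other cases tedious), whereas your identification of $\det\sigma$ as the twisting map of the rank-one kernel bimodule $A\kappa=\ker\mu$ handles all parameters uniformly and gets multiplicativity for free from associativity of the right action. The reduction to the trimmed extension via Proposition \ref{xxprop1.11} and Lemma \ref{xxlem1.10}(b,d) is also correctly justified.

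Parts (b) and (c), however, have real gaps. In (b), ``running the count on the other side'' assumes that $W=Ay_1^2+Ay_1y_2+Ay_2^2$ is \emph{right} free of rank $3$ and that $\ker\mu$ is then right free of rank $1$ on $\kappa$. Neither follows from $V$ being right free: the right-module structure of $W$ is essentially the degree-two piece of the unproven conjecture stated after Lemma \ref{xxlem1.9} (that invertibility of $\sigma$ forces right-freeness of $B$ on the $y$-monomials in the ungraded setting); moreover a short exact sequence of right modules with free outer terms only makes the kernel stably free, and even granting that $A^{\det\sigma}$ is right free of rank one, passing from that to bijectivity of $\det\sigma$ needs an argument for general $A$. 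The paper instead derives the analogues of (R3.1)--(R3.3) for the inverse $\phi$ and computes $\det\sigma\circ\det\phi=\id$ directly. In (c) you explicitly defer the one step that carries the content: writing down the $P$-deformed adjugate and checking $\sigma\bullet\phi=\mathrm{Id}$ against (R3.1)--(R3.3). The paper's proof consists exactly of this formula, $\phi=\bigl(\begin{smallmatrix}\sigma_{22}\circ d^{-1}&-p\sigma_{21}\circ d^{-1}\\-p^{-1}\sigma_{12}\circ d^{-1}&\sigma_{11}\circ d^{-1}\end{smallmatrix}\bigr)$ with $d=\det\sigma$ for $P=(p,0)$, plus the verification; calling it ``the main obstacle'' and stopping there leaves (c) unproven. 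Your finite-dimensional-pieces argument for upgrading a right $\bullet$-inverse to a two-sided one in the graded case is a reasonable alternative to the paper's Hilbert-series route (modulo bookkeeping the degree shifts of the $\sigma_{ij}$ when $\deg y_1\neq\deg y_2$, and checking that the resulting $\phi$ is an algebra homomorphism as Definition \ref{xxdefn1.8} requires), but it cannot substitute for the missing construction of $\phi$ itself.
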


\begin{proof} First of all we may assume that $\delta=0$ and 
$\tau=0$ since the assertions are not related to $\delta$
and $\tau$. Secondly, by Remark \ref{xxrem1.4}(b), we may 
further assume that $P$ is either $(0,0)$, or $(p,0)$, 
or $(1,1)$ (which is in fact not essential). It turns out 
that the the proof is easiest when 
$P=(0,0)$ and that the proof is most tedious when $P=(1,1)$. 
So we only give all details in the intermediate case when 
$P=(p,0)$ and $p\neq 0$. In this case relations
(R3.1), (R3.2) and (R3.3) become
\begin{equation}
\label{E2.1.1}
\sigma_{21}\circ \sigma_{11}=p\sigma_{11}\circ \sigma_{21}\tag{E2.1.1}
\end{equation}
\begin{equation}
\label{E2.1.2}
\sigma_{21}\circ \sigma_{12}=-p\sigma_{22}\circ \sigma_{11}
+p\sigma_{11}\circ \sigma_{22}+p^2\sigma_{12}\circ\sigma_{21}\tag{E2.1.2}
\end{equation}
\begin{equation}
\label{E2.1.3}
\sigma_{22}\circ \sigma_{12}=p\sigma_{12}\circ \sigma_{22}.\tag{E2.1.3}
\end{equation}

(a) The following computation uses (E2.1.1)-(E2.1.3).
$$
\begin{aligned}
\det \sigma(r\cdot a)&=[\sigma_{11}\sigma_{22}-p^{-1}
\sigma_{21}\sigma_{12}](ra)\\
&=
\sigma_{11}[\sigma_{21}(r)\sigma_{12}(a)+\sigma_{22}(r)\sigma_{22}(a)]\\
&\quad 
-p^{-1}\sigma_{21}[\sigma_{11}(r)\sigma_{12}(a)+\sigma_{12}(r)\sigma_{22}(a)]\\
&=\sigma_{11}\sigma_{21}(r)\sigma_{11}\sigma_{12}(a)
 +\sigma_{12}\sigma_{21}(r)\sigma_{21}\sigma_{12}(a)\\
&\quad
+\sigma_{11}\sigma_{22}(r)\sigma_{11}\sigma_{22}(a)
+\sigma_{12}\sigma_{22}(r)\sigma_{21}\sigma_{22}(a)\\
&\quad
-p^{-1}\sigma_{21}\sigma_{11}(r)\sigma_{11}\sigma_{12}(a)
-p^{-1}\sigma_{22}\sigma_{11}(r)\sigma_{21}\sigma_{12}(a)\\
&\quad
-p^{-1}\sigma_{21}\sigma_{12}(r)\sigma_{11}\sigma_{22}(a)
-p^{-1}\sigma_{22}\sigma_{12}(r)\sigma_{21}\sigma_{22}(a)\\
&=\sigma_{12}\sigma_{21}(r)\sigma_{21}\sigma_{12}(a)\\
&\quad
+\sigma_{11}\sigma_{22}(r)\sigma_{11}\sigma_{22}(a)\\
&\quad
-p^{-1}\sigma_{22}\sigma_{11}(r)\sigma_{21}\sigma_{12}(a)\\
&\quad
-p^{-1}\sigma_{21}\sigma_{12}(r)\sigma_{11}\sigma_{22}(a)
\qquad\qquad\qquad\quad
{\text{by (E2.1.1) and (E2.1.3)}}\\
&=\sigma_{11}\sigma_{22}(r)\sigma_{11}\sigma_{22}(a)
-p^{-1}\sigma_{21}\sigma_{12}(r)\sigma_{11}\sigma_{22}(a)\\
&\quad
+\sigma_{12}\sigma_{21}(r)\sigma_{21}\sigma_{12}(a)
-p^{-1}\sigma_{22}\sigma_{11}(r)\sigma_{21}\sigma_{12}(a)\\
&=\sigma_{11}\sigma_{22}(r)\sigma_{11}\sigma_{22}(a)
-p^{-1}\sigma_{21}\sigma_{12}(r)\sigma_{11}\sigma_{22}(a)\\
&\quad
+p^{-2}\sigma_{21}\sigma_{12}(r)\sigma_{21}\sigma_{12}(a)
-p^{-1}\sigma_{11}\sigma_{22}(r)\sigma_{21}\sigma_{12}(a)
\qquad
{\text{by (E2.1.2)}}\\
&=[\sigma_{11}\sigma_{22}(r)-p^{-1}\sigma_{21}\sigma_{12}(r)]
[\sigma_{11}\sigma_{22}(a)-p^{-1}\sigma_{21}\sigma_{12}(a)]\\
&=\det \sigma(r)\det\sigma(a).
\end{aligned}
$$

(b) Let $\phi$ be the inverse of $\sigma$ as in the Definition
\ref{xxdefn1.8}. For this part we also need the relations (R3) 
for $\phi$, which are listed below. (These can be obtained by
going through the work in Section 1 for left double extensions).

\begin{equation}
\label{E2.1.4}
\phi_{11}\circ \phi_{12}=p\phi_{12}\circ \phi_{11}\tag{E2.1.4}
\end{equation}
\begin{equation}
\label{E2.1.5}
\phi_{21}\circ \phi_{12}=-p\phi_{11}\circ \phi_{22}
+p\phi_{22}\circ \phi_{11}+p^2\phi_{12}\circ\phi_{21}\tag{E2.1.5}
\end{equation}
\begin{equation}
\label{E2.1.6}
\phi_{21}\circ \phi_{22}=p\phi_{22}\circ \phi_{12}.\tag{E2.1.6}
\end{equation}

The determinant of $\phi$ is
$$\det \phi=\phi_{11}\circ\phi_{22}-p\phi_{12}\circ\phi_{21}=
\phi_{22}\circ\phi_{11}-p^{-1}\phi_{21}\circ\phi_{12}.$$
We claim that 
$$\det \phi\circ\det \sigma=\det \phi\circ\det \sigma=Id_A.$$
We only prove that $\det \sigma\circ \det \phi=Id_A$ and 
the proof of $\det \phi\circ\det \sigma=Id_A$ is similar.

$$
\begin{aligned}
\det\sigma\det \phi
&=(\sigma_{22}\sigma_{11}-p\sigma_{12}\sigma_{21})\det \phi
=\sigma_{22}\sigma_{11}\det \phi-p\sigma_{12}\sigma_{21}\det \phi\\
&=\sigma_{22}\sigma_{11}(\phi_{11}\phi_{22}-p\phi_{12}\phi_{21})
-p\sigma_{12}\sigma_{21}(\phi_{22}\phi_{11}-p^{-1}\phi_{21}\phi_{12})\\
&=\sigma_{22}\sigma_{11}\phi_{11}\phi_{22}
-p\sigma_{22}\sigma_{11}\phi_{12}\phi_{21}\\
&\quad -p\sigma_{12}\sigma_{21}\phi_{22}\phi_{11}
+\sigma_{12}\sigma_{21}\phi_{21}\phi_{12}\\
&=\sigma_{22}[Id_A-\sigma_{21}\phi_{21}]\phi_{22}
+p\sigma_{22}\sigma_{21}\phi_{22}\phi_{21}\\
&\quad
p\sigma_{12}\sigma_{11}\phi_{12}\phi_{11}+
\sigma_{12}[Id_A-\sigma_{11}\phi_{11}]\phi_{12}
\qquad\qquad\quad
{\text{by Definition 1.8}}\\
&=\sigma_{22}\phi_{22}-\sigma_{22}\sigma_{21}\phi_{21}\phi_{22}
+\sigma_{22}\sigma_{21}\phi_{21}\phi_{22}\\
&\quad
\sigma_{12}\sigma_{11}\phi_{11}\phi_{12}+
\sigma_{12}\phi_{12}-\sigma_{12}\sigma_{11}\phi_{11}\phi_{12}
\qquad
{\text{by (E2.1.4) and (E2.1.6)}}\\
&=\sigma_{22}\phi_{22}+\sigma_{12}\phi_{12}\\
&=Id_A.\qquad\qquad\quad\qquad\qquad\quad
\qquad\qquad\quad\qquad\qquad\;
{\text{by Definition 1.8}}
\end{aligned}
$$

The assertion follows.

(c) Let $d=\det \sigma$ and let 
$$\phi=\begin{pmatrix} \phi_{11}&\phi_{12}\\
                       \phi_{21}&\phi_{22}
\end{pmatrix}
=\begin{pmatrix} \sigma_{22}\circ d^{-1}&-p\sigma_{21}\circ d^{-1}\\
                 -p^{-1}\sigma_{12}\circ d^{-1}&\sigma_{11}\circ d^{-1}
\end{pmatrix}.$$
It is straightforward to prove that $\sigma\bullet \phi=Id_{2\times 2}$
where $\bullet$ is defined in Definition \ref{xxdefn1.8}.
Hence $\phi$ is a right inverse of $\sigma$. 

We now assume that $B$ is a connected graded right double extension.
By the relation (R1), $y_1 A+y_2A+A\subseteq Ay_1+Ay_2+A$. Since 
$\phi$ is a right inverse of $\sigma$ we also obtain \eqref{E1.9.1}:
$$r(y_1,y_2)=(y_1,y_2)\begin{pmatrix} \phi_{11}(r)&\phi_{12}(r)\\
                       \phi_{21}(r)&\phi_{22}(r)
\end{pmatrix}+(\delta'_1(r),\delta'_2(r))$$
for all $r\in A$. This implies that $Ay_1+Ay_2+A\subseteq
y_1A+y_2A+A$ and hence $Ay_1+Ay_2+A=y_1A+y_2A+A$.
Since $B$ is a right double extension,
$Ay_1+Ay_2+A$ is a left free $A$-module with basis $\{1,y_1,y_2\}$. In 
particular the Hilbert series of $Ay_1+Ay_2+A$ is equal to
$(1+t^{\deg y_1}+t^{\deg y_2})H_A(t)$. Therefore the Hilbert series of
$y_1A+y_2A+A$ is also $(1+t^{\deg y_1}+t^{\deg y_2})H_A(t)$. This implies
that $y_1A+y_2A+A$ is a right free $A$-module with basis $\{1,y_1,y_2\}$.
By Lemma \ref{xxlem1.9}, $\sigma$ is invertible.

By Proposition \ref{xxprop1.13}, $B$ is a double extension.
\end{proof}

Next we prove that the determinant $\det \sigma$ has a homological 
interpretation. If $M$ is a $(B,A)$-bimodule and $f$ is an automorphism
of $A$, then the twisted bimodule ${^1M^f}=M^f$ is defined to be
$M$ as $k$-space with bimodule action 
$$b\cdot m\cdot a=bmf(a)$$
for all $m\in M,b\in B,a\in A$. Similarly for an $(A,B)$-bimodule
$N$, twisted $(A,B)$-bimodule ${^f} N$ can be defined. 

For $M\oplus M$ there is another
right $A$-module structure defined by using $\sigma$ as follows:
$$(s,t)*r=(s,t)\begin{pmatrix} \sigma_{11}(r)&\sigma_{12}(r)\\
                     \sigma_{21}(r)&\sigma_{22}(r)
\end{pmatrix}
=(s\sigma_{11}(r)+t\sigma_{21}(r),
               s\sigma_{12}(r)+t\sigma_{22}(r))$$
for all $s,t\in M$ and $r\in A$. Since $\sigma$ is an algebra
homomorphism, $*$ defines a right $A$-module structure
commuting with the left $B$-module structure.
This $(B.A)$-bimodule is denoted by $(M\oplus M)^\sigma$.

\begin{theorem} 
\label{xxthm2.2}
Let $B$ be a trimmed double extension $A_{P}[y_1,y_2;\sigma]$.
\begin{enumerate}
\item
There is an exact sequence of $(B,A)$-modules
\begin{equation}
\label{E2.2.1}
0\to B^{\det \sigma} \to  (B\oplus B)^{\sigma}
\to B\to A\to 0
\tag{E2.2.1}
\end{equation}
where the last term $A$ is identified with $B/(y_1,y_2)$.
\item
$$\Ext^i_B(A,B)\cong
\begin{cases} {^{\det \sigma}A} & i=2\\
0& i\neq 2\end{cases}.$$
\item
If $B$ is a connected graded double extension, then 
the graded versions of (a) and (b) hold.
\end{enumerate}
\end{theorem}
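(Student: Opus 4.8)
The plan is to produce an explicit free resolution of the left $B$-module $A=B/(y_1,y_2)$ of length two and then read off both statements from it. Since $B$ is trimmed, the commutation rule \eqref{R2} reads $y_ir=\sigma_{i1}(r)y_1+\sigma_{i2}(r)y_2$, and \eqref{R1} reads $y_2y_1=p_{12}y_1y_2+p_{11}y_1^2$. I take $\epsilon\colon B\to A$ to be the quotient map, $d_1\colon (B\oplus B)^{\sigma}\to B$ to be $(s,t)\mapsto sy_1+ty_2$, and $d_2\colon B^{\det\sigma}\to (B\oplus B)^{\sigma}$ to be $b\mapsto \big(b(y_2-p_{11}y_1),\,-p_{12}by_1\big)$. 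As left $B$-modules these three objects are free, so once exactness is established (a) exhibits a free resolution of ${}_BA$.

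First I would check these are maps of $(B,A)$-bimodules. For $d_1$ this is a one-line computation using \eqref{R2}: the twisted right $A$-action on $(B\oplus B)^{\sigma}$ is designed so that $d_1((s,t)*r)=(sy_1+ty_2)r=d_1(s,t)\,r$. The crucial verification is that $d_2$ respects the right $A$-actions, i.e. $d_2(b\det\sigma(r))=d_2(b)*r$. Expanding $d_2(b)*r$ with \eqref{R2} and collecting $y_1$- and $y_2$-coefficients in each slot, the first slot yields the coefficient $\sigma_{21}\sigma_{11}(r)-p_{11}\sigma_{11}\sigma_{11}(r)-p_{12}\sigma_{11}\sigma_{21}(r)$ on $y_1$ and $\sigma_{22}\sigma_{11}(r)-p_{11}\sigma_{12}\sigma_{11}(r)-p_{12}\sigma_{12}\sigma_{21}(r)$ on $y_2$; the latter is exactly $\det\sigma(r)$ by definition, and the former collapses to $-p_{11}\det\sigma(r)$ by \eqref{R3.1}. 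The second slot collapses similarly, its $y_2$-coefficient vanishing by \eqref{R3.3} and its $y_1$-coefficient becoming $-p_{12}\det\sigma(r)$ by \eqref{R3.2}, so that $d_2(b)*r=b\det\sigma(r)\big(y_2-p_{11}y_1,\,-p_{12}y_1\big)=d_2(b\det\sigma(r))$. This step, where the relations \eqref{R3.1}--\eqref{R3.3} and the precise formula for $\det\sigma$ conspire, is the heart of the argument and the place I expect to spend the most care. That the sequence is a complex then follows from \eqref{R1}: $d_1(d_2(b))=b\big((y_2-p_{11}y_1)y_1-p_{12}y_1y_2\big)=b\big(y_2y_1-p_{11}y_1^2-p_{12}y_1y_2\big)=0$.

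Next I would prove exactness. Surjectivity of $\epsilon$ is clear, and $\ker\epsilon=By_1+By_2=\operatorname{im}d_1$ is immediate from the left-$A$-module freeness of $B$ on $\{y_1^{n_1}y_2^{n_2}\}$ in Definition~\ref{xxdefn1.3}(aiii). For the graded statement (c) I would then invoke Hilbert series: with the natural shifts $B(-\deg y_1-\deg y_2)\to B(-\deg y_1)\oplus B(-\deg y_2)\to B\to A$, the freeness gives $H_B=H_A\big((1-t^{\deg y_1})(1-t^{\deg y_2})\big)^{-1}$, which forces the alternating sum of Hilbert series to vanish; combined with surjectivity of $\epsilon$, exactness at $B$, and injectivity of $d_2$, an Euler-characteristic comparison of homologies gives exactness at the middle term for free. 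Injectivity of $d_2$ reduces, via the nonzero scalar $p_{12}$ in the second slot, to injectivity of right multiplication by $y_1$ on $B$, which I would settle by a leading-term argument: since $y_2^{j}y_1=p_{12}^{\,j}y_1y_2^{j}+(\text{lower }y_2\text{-degree})$, right multiplication by $y_1$ is triangular with nonzero diagonal on the basis $\{y_1^{n_1}y_2^{n_2}\}$. In the ungraded case, exactness at the middle requires a direct version of the same leading-term induction, comparing top $y_2$-degree coefficients in $sy_1+ty_2=0$; this bookkeeping is routine but is the one genuinely tedious point outside the graded setting.

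Finally, part (b) follows by applying $\Hom_B(-,B)$ to the resolution of (a). As left $B$-modules it is $0\to B\xrightarrow{d_2}B\oplus B\xrightarrow{d_1}B\to A\to 0$, so $\Ext^i_B(A,B)$ is the cohomology of $0\to B\xrightarrow{d_1^{*}}B\oplus B\xrightarrow{d_2^{*}}B\to 0$. Here $d_1^{*}$ is $c\mapsto(y_1c,y_2c)$, injective because left multiplication by $y_1$ carries basis monomials to basis monomials, so $\Ext^0=0$. The image of $d_2^{*}$ is $y_1B+y_2B$, whence $\Ext^2=B/(y_1B+y_2B)$, and by the right-$A$-module freeness of $B$ on $\{y_2^{n_1}y_1^{n_2}\}$ in Definition~\ref{xxdefn1.3}(biii) this cokernel is $A$ as a right $A$-module; the key point is that the source of $d_2$ was twisted, and $\Hom_B((-)^{\det\sigma},B)$ transports the twist onto the left $A$-action, giving exactly ${}^{\det\sigma}A$. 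For $\Ext^1=0$ I would argue directly: from $(y_2-p_{11}y_1)c_1-p_{12}y_1c_2=0$, reducing modulo the right submodule $y_1B$ (which is stable under left multiplication by $y_2$ since $y_2y_1=p_{12}y_1y_2+p_{11}y_1^2$) and using that $y_2$ acts injectively on $B/y_1B$ forces $c_1\in y_1B$; then \eqref{R1} and injectivity of left multiplication by $y_1$ give $(c_1,c_2)=(y_1c,y_2c)\in\operatorname{im}d_1^{*}$. The graded refinement in (c) is simply the observation that every map above is homogeneous for the indicated shifts and every twisting automorphism preserves degree.
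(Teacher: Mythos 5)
Your proposal is correct and follows essentially the same route as the paper: the same length-two bimodule resolution (your $d_2$ is the paper's map $g$ up to an overall sign), the same verification of right $A$-linearity via \eqref{R3.1}--\eqref{R3.3} and the defining formula for $\det\sigma$, and the same dualization to get part (b). The only difference is that you spell out the exactness of \eqref{E2.2.2} and the vanishing of $\Ext^0$ and $\Ext^1$, which the paper simply attributes to Definition \ref{xxdefn1.3}(aiii) and to $B$ being a left double extension.
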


\begin{proof} (a) It follows from Definition 
\ref{xxdefn1.3}(aii,aiii) that we have an exact 
sequence of left $B$-modules
\begin{equation}
\label{E2.2.2}
0\to B \xrightarrow{g} B\oplus B
\xrightarrow{f} B\xrightarrow{\epsilon} A\to 0
\tag{E2.2.2}
\end{equation}
where $f$ maps $(a,b)$ to $(a,b)\begin{pmatrix}y_1\\
y_2\end{pmatrix}=ay_1+by_2$ and
$g$ maps $c$ to $(c(p_{11}y_1-y_2),cp_{12}y_1)$.
Since $B$ contains $A$, $B$ has a natural right $A$-module
structure and $\epsilon$ is a $(B,A)$-bimodule map.
Let $*$ be the right $A$-module on $(B\oplus B)^\sigma$
defined before the theorem. Then 
$$(a,b)*r
=(a\sigma_{11}(r)+b\sigma_{21}(r),a\sigma_{12}(r)+b\sigma_{22}(r)).
$$
Since $\sigma$ is an algebra homomorphism, $*$ defines a right 
$A$-module structure. Since 
$$f((a,b)*r)=f((a,b)\sigma(r))=(a,b)\sigma(r)\begin{pmatrix}y_1\\
y_2\end{pmatrix}=(a,b)\begin{pmatrix}y_1\\
y_2\end{pmatrix} r=f(a,b)r,$$
$f$ is a right $A$-module map. Let $\star$ be the right $A$-module
on $B^{\det \sigma}$, namely, 
$$c\star r=c\det\sigma(r)$$
for all $c\in B$ and $r\in A$. Since $\det \sigma$ is invertible
(Proposition \ref{xxprop2.1}(b)) and  since $A^{\det \sigma}\cong A$ 
as right $A$-module, we see that $B^{\det \sigma}$ is isomorphic
to a free right $A$-module $\bigoplus_{n_1,n_2\geq 0} 
y_1^{n_1}y_2^{n_2} A$. Next we verify that $g$ is a right $A$-module 
map. For $c\in B$ and $r\in A$, 
$$
\begin{aligned}
g(c)*r&= (c(p_{11}y_1-y_2),cp_{12}y_1)*r\\
&=(c(p_{11}y_1-y_2)\sigma_{11}(r)+cp_{12}y_1\sigma_{21}(r),
   c(p_{11}y_1-y_2)\sigma_{12}(r)+cp_{12}y_1\sigma_{22}(r)).
\end{aligned}
$$
We need to change this expression by moving $\sigma_{ij}(r)$
form the right-hand side of $y_s$ to the left-hand side of $y_t$.
Since $g$ is a left $B$-module map, we may further assume that
$c$ is $1$. The first component is now
$$
\begin{aligned}
(p_{11}y_1-y_2)&\sigma_{11}(r)+p_{12}y_1\sigma_{21}(r)\\
&=
p_{11}\sigma_{11}\sigma_{11}(r)y_1+p_{11}\sigma_{12}\sigma_{11}(r)y_2\\
&\quad
-\sigma_{21}\sigma_{11}(r)y_1-\sigma_{22}\sigma_{11}(r)y_2\\
&\quad
+p_{12}\sigma_{11}\sigma_{21}(r)y_1+p_{12}\sigma_{12}\sigma_{21}(r)y_2\\
&=(p_{11}\sigma_{11}\sigma_{11}(r)-\sigma_{21}\sigma_{11}(r)
+p_{12}\sigma_{11}\sigma_{21}(r))y_1-(\det\sigma)(r)y_2\\
&=p_{11}(\sigma_{22}\sigma_{11}-p_{11}\sigma_{12}\sigma_{11}(r)-
p_{12}\sigma_{12}\sigma_{21}(r))y_1-(\det\sigma)(r)y_2 \\
&\qquad\qquad\qquad\qquad\qquad\qquad\qquad\qquad\qquad
\qquad\qquad\qquad
{\text{by (R3.1)}}\\
&=(\det \sigma)(p_{11}y_1-y_2).
\end{aligned}
$$
The second component is now
$$
\begin{aligned}
(p_{11}y_1-y_2)&\sigma_{12}(r)+p_{12}y_1\sigma_{22}(r)\\
&=p_{11}\sigma_{11}\sigma_{12}(r)y_1+p_{11}\sigma_{12}\sigma_{12}(r)y_2\\
&\quad
-\sigma_{21}\sigma_{12}(r)y_1-\sigma_{22}\sigma_{12}(r)y_2\\
&\quad
+p_{12}\sigma_{11}\sigma_{22}(r)y_1+p_{12}\sigma_{12}\sigma_{22}(r)y_2\\
&=(p_{11}\sigma_{11}\sigma_{12}(r)-\sigma_{21}\sigma_{12}(r)
+p_{12}\sigma_{11}\sigma_{22}(r))y_1 
\qquad\qquad {\text{by (R3.3)}}\\
&=(p_{12}^{-1}p_{11}\sigma_{11}\sigma_{12}(r)
-p_{12}^{-1}\sigma_{21}\sigma_{12}(r)
+\sigma_{11}\sigma_{22}(r))p_{12}y_1 \\
&=(\det\sigma)(r) p_{12}y_1.
\end{aligned}
$$
Thus 
$$g(c)* r=(c(\det\sigma)(r)(p_{11}y_1-y_2),c(\det\sigma)(r)p_{12}y_1)
=g(c(\det\sigma)(r))=g(c\star r).$$
Hence $g$ is a $(B,A)$-bimodule map and we have an exact sequence 
of $(B,A)$-bimodules \eqref{E2.2.1}.

(b) To compute $\Ext^2_B(A,B)$, we apply the functor $(-)^\vee:=
\Hom_B(-,B)$ to the truncated sequence of \eqref{E2.2.1} and 
obtain a complex of $(A,B)$-modules
$$0\leftarrow {^{\det \sigma}B }\xleftarrow{g^\vee} {^{\sigma}(B\oplus B)}
\xleftarrow{f^\vee} B\leftarrow 0.$$
Since $B$ is a left double extension of $A$, the above
complex is exact except for the homology at ${^{\det \sigma}B }$
position. Hence $\Ext^i_B(A,B)=0$ for all $i\neq 2$ and
$$\Ext^2_{B}(A,B)={^{\det \sigma}B }/im (g^\vee)\cong 
{^{\det \sigma}B }/(y_1,y_2)={^{\det \sigma}A }.$$

(c) This is clear since all maps can be chosen to be graded.
\end{proof}

\section{Regularity of double extensions}
\label{xxsec3}

In this section we recall the definition of Artin-Schelter
regularity and prove Theorem \ref{xxthm0.2}.

The right derived functor
of $\Hom_B(-,-)$ is denoted by $\RHom_B$
and the left derived functor of $\otimes_B$
is denoted by $\otimes^L_B$. In particular,
$$H^i(\RHom_B(M,N))=\Ext^i_B(M,N)$$
where $H^i$ is the $i$th cohomology of a complex, and 
$$H^{-i}(M\otimes^L_B N)=\Tor^B_i(M,N).$$
When $B$ is graded, we also take these functors and
derived functors in the graded module category. 
Let $X$ be a complex. Then $X[d]$ means the $d$th
complex shift and $X(d)$ means $d$th degree shift
where the degree shift comes from the grading of
$B$. 

A connected graded algebra $B=k\oplus B_1\oplus B_2\oplus \cdots$ 
is called {\it Artin-Schelter regular} or {\it regular} for short 
if the following three conditions hold.
 \begin{enumerate}
 \item[(AS1)] $B$ has finite global dimension $d$, and
 \item[(AS2)] $B$ is {\it Gorenstein}, namely, there is an
integer $l$ such that,
 $$\Ext^i_B({_Bk}, B)=\begin{cases} k(l) & \text{ if }
i=d\\
                                0   & \text{ if }
i\neq d
 \end{cases}
 $$
 where $k$ is the trivial $B$-module $B/B_{\geq 1}$; and
the same condition holds for the right trivial $B$-module
$k_B$.
\item[(AS3)] 
$B$ has finite Gelfand-Kirillov dimension, i.e., there
is a 
positive number $c$ such that $\dim B_n< c\; n^c$ for
all 
$n\in \mathbb{N}$.
 \end{enumerate}

If $B$ is regular, then the global dimension of $B$ is called the
{\it dimension} of $B$. Condition (AS2) means that 
$\RHom_B(k,B)=k(l)[d]$.

If $B$ is regular, then by \cite[3.1.1]{SZ}, the trivial left 
$B$-module $_Bk$ has a minimal free resolution of the form
\begin{equation}
\label{E3.0.1}
0\to P_{d}\to \cdots P_{1}\to P_{0}\to k_B\to 0
\tag{E3.0.1}
\end{equation}
 where $P_{w}=\oplus_{s=1}^{n_w}B(-i_{w,s})$ for some
finite integers 
 $n_w$ and $i_{w,s}$. The Gorenstein condition (AS2)
implies that 
the above free resolution is symmetric in the 
 sense that the dual complex of \eqref{E3.0.1} is a
free resolution 
of the trivial right $B$-module (after a degree
shift). As a consequence,
we have $P_0=B$, $P_{d}=B(-l)$, $n_w=n_{d-w}$, and
$i_{w,s}+
i_{d-w, n_w-s+1}=l$ for all $w,s$.

For simplicity we only consider graded algebras generated in 
degree 1. Regular algebras of dimension 3 were classified by 
Artin, Schelter, Tate and Van den Bergh \cite{AS,ATV1,ATV2}. 
If $B$ is a regular algebra of dimension 3, then it is generated by 
either two or three elements. If $B$ is a noetherian regular 
algebra of global dimension 4, then $B$ is generated by either 
2, or 3 or 4 elements \cite[Proposition 1.4]{LPWZ}. Possible
forms of the minimal free resolutions of the trivial module $k$ 
are listed in \cite[Proposition 1.4]{LPWZ}. In this paper we 
only consider regular algebras $B$ generated by 4 elements of 
degree 1. Some discussions about the connection between the 
regularity of a right double extension and the invertibility
of $\sigma$ and $p_{12}$ are given in \cite[Section 2]{ZZ}.

Consider a sequence of graded algebra homomorphisms (sometimes 
called an ``exact sequence'' of algebras)
$$1\to A\to B\to C\to 1$$
so that 
\begin{enumerate}
\item[(i)]
$A$ is a subring of $B$, and 
\item[(ii)]
$C=B/BA_{\geq 1}$, where $BA_{\geq 1}=A_{\geq 1}B$ is
an ideal of $B$.
\end{enumerate}
This exact sequence will help us to prove
some homological properties of $B$ using
the properties of $A$ and $C$. By Proposition \ref{xxprop1.14},
a double extension gives rise to such an exact 
sequence (where $C= k\langle y_1,y_2\rangle/
(y_2y_1-p_{12}y_1y_2-p_{11}y_1^2)$).

For a trimmed double extension we can switch the position
of $A$ and $C$. To avoid the confusion we use different 
letters. Here is the setup. Let $D\subset B$ 
are connected graded rings satisfying the following conditions:
\begin{equation}
\label{E3.0.2}
\qquad
\tag{E3.0.2}
\end{equation}
\begin{enumerate}
\item
$B$ is free (or equivalently flat in this case) over
$D$ on the left and on the right. 
\item
$D_{\geq 1}B=BD_{\geq 1}$, hence it is a two-sided 
ideal of $B$. Let $C$ be the graded factor ring 
$B/D_{\geq 1}B$
\item
The trivial $D$-module $_Dk$ has a free resolution
$P^*$:
$$\cdots \to P^{-i}\to P^{-i+1}\to \cdots\to P^0\to
k\to 0$$ 
such that each $P^{-i}$ is a finitely generated free $D$-module.
\end{enumerate}
Condition (E3.0.2)(a) holds when $B$ is a double
extension of $D=A$. However, $D$ could be different
from $A$ in general. 

\begin{lemma}
\label{xxlem3.1} 
Assume \textup{(E3.0.2)}. Let $k$ denote 
the trivial module over $D$ (or over $B$, or over $C$).
\begin{enumerate}
\item
$B\otimes^L_D k\cong C$ as complex of graded
$B$-modules. 
\item
If $M$ is a bounded complex of graded $D$-module 
with finite projective dimension, then 
$\RHom_D(k,M)=\RHom_{D}(k,D)\otimes^L_D M$.
\item
$\RHom_D(k,B)=\RHom_{D}(k,D)\otimes_D B$.
\item
$\RHom_B(C,k)\cong \RHom_{D}(k,k)$ as a complex of
graded $k$-modules. As a consequence, $\pdim_B
C=\gldim D$.
\item
Suppose the algebra $D$ satisfies \textup{(AS2)}
with $\RHom_D(k,D)=k(l_D)[d_D]$ for 
some integers $l_D, d_D\in {\mathbb Z}$. 
Then $\RHom_B(C,B)\cong C(l_D)[d_D]$ as complexes of
right $B$-modules.
\end{enumerate}
\end{lemma}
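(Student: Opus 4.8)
The plan is to prove parts (a)--(e) in order, the central construction being an explicit finitely generated free resolution of ${}_BC$ obtained by inducing up the resolution $P^*$ of ${}_Dk$ supplied by \textup{(E3.0.2)(c)}.

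First I would dispose of (a). Since $B$ is flat as a right $D$-module by \textup{(E3.0.2)(a)}, the derived tensor product agrees with the ordinary one, so $B\otimes^L_D k = B\otimes_D(D/D_{\geq 1})=B/BD_{\geq 1}$. By \textup{(E3.0.2)(b)} we have $BD_{\geq 1}=D_{\geq 1}B$, whence $B/BD_{\geq 1}=C$, giving the required isomorphism of graded $B$-modules. Applying the flat functor $B\otimes_D(-)$ to the resolution $P^*\to k$ then produces a complex $Q^*:=B\otimes_D P^*$ of finitely generated free left $B$-modules which, by flatness together with (a), is a free resolution of ${}_BC$. This $Q^*$ is the object I will feed into every subsequent $\RHom_B(C,-)$ computation.

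For (b) I would use the finitely generated free resolution $P^*$ of ${}_Dk$ directly: for each term the natural map $\Hom_D(P^{-i},D)\otimes_D M\to\Hom_D(P^{-i},M)$ is an isomorphism because $P^{-i}$ is finitely generated free, and these assemble into an isomorphism of complexes $\Hom_D(P^*,D)\otimes_D M\cong\Hom_D(P^*,M)$. The left-hand complex is a complex of free right $D$-modules representing $\RHom_D(k,D)$, so tensoring it with $M$ computes the derived tensor product; the hypothesis that $M$ has finite projective dimension (and is bounded) guarantees all complexes in sight are bounded and the derived-category identifications are legitimate. Part (c) is then the instance $M=B$: as $B$ is free, hence of projective dimension $0$ and flat over $D$, part (b) gives $\RHom_D(k,B)=\RHom_D(k,D)\otimes^L_D B=\RHom_D(k,D)\otimes_D B$.

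The two $\RHom_B(C,-)$ statements follow from $Q^*$ together with the adjunction $\Hom_B(B\otimes_D P^{-i},N)\cong\Hom_D(P^{-i},N)$ coming from restriction of scalars along $D\hookrightarrow B$. For (d) I take $N=k$: the trivial $B$-module restricts to the trivial $D$-module, so $\RHom_B(C,k)\cong\Hom_D(P^*,k)=\RHom_D(k,k)$ as complexes of graded $k$-modules, and comparing the largest $i$ with $\Ext^i\neq 0$ (which computes projective dimension for finitely generated modules over a connected graded ring) gives $\pdim_B C=\pdim_D k=\gldim D$. For (e) I take $N=B$, obtaining $\RHom_B(C,B)\cong\Hom_D(P^*,B)\cong\RHom_D(k,B)$, which by (c) equals $\RHom_D(k,D)\otimes_D B$. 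Invoking \textup{(AS2)} for $D$ this is $k(l_D)[d_D]\otimes_D B=(k\otimes_D B)(l_D)[d_D]$, and the right-handed form of (a), namely $k\otimes_D B=(D/D_{\geq 1})\otimes_D B=B/D_{\geq 1}B=C$ as right $B$-modules, yields $\RHom_B(C,B)\cong C(l_D)[d_D]$. I expect the main obstacle to be purely bookkeeping: tracking the left/right $D$- and $B$-module structures through the adjunction and tensor identities so that each $\RHom$ and $\otimes$ carries the correct residual module structure (in particular that $\RHom_B(C,B)$ really is a complex of \emph{right} $B$-modules), and checking that the natural isomorphisms are compatible with the differentials rather than merely termwise.
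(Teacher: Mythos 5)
Your proposal is correct and follows essentially the same route as the paper: flatness of $B$ over $D$ for (a), the finiteness of the terms of $P^*$ to commute $\Hom_D(-,D)\otimes_D M$ past $\Hom_D(-,M)$ for (b) and (c), and the $\Hom$--$\otimes$ adjunction for (d) and (e). The only (cosmetic) difference is that you realize the adjunction concretely via the induced resolution $Q^*=B\otimes_D P^*$ of ${}_BC$, whereas the paper invokes the derived adjunction $\RHom_B(B\otimes^L_D k,N)\cong\RHom_D(k,\RHom_B(B,N))$ directly.
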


\begin{proof} (a) Since $B_D$ is free, $B\otimes^L_D
k\cong
B\otimes_D k=B/BD_{\geq 1}=C$.

(b) We replace $M$ by a bounded complex of projective
$D$-modules,
still denoted by $M$.
Let $P^*\to k\to 0$ be a free resolution of $_Dk$ such
that
each $P^{-i}$ is finite. Then we have
$$\RHom_D(k,M)\cong \Hom_D(P^*,M)\cong
\Hom_D(P^*,D)\otimes_D M$$
where the last $\cong$ follows by the fact each
$P^{-i}$ is finite.
We continue the computation
$$\Hom_D(P^*,D)\otimes_D M\cong \RHom_D(k,D)\otimes_D
M\cong
\RHom_D(k,D)\otimes^L_D M.$$

(c) Note that $_DB$ is free, so the assertion is a
consequence of (b).

(d) We use part (a) and the $\Hom$-$\otimes$
adjunction
$$\RHom_B(C,{_Bk})\cong \RHom_B(B\otimes^L_D k,k)
\qquad\qquad\qquad\qquad\qquad$$
$$\qquad\qquad\qquad\qquad\cong
\RHom_{D}(k,\RHom_{B}(B,k))\cong \RHom_D(k,k).$$

(e) We use part (a) and the $\Hom$-$\otimes$
adjunction
$$\RHom_B(C,B)\cong \RHom_B(B\otimes^L_D k,B)
\qquad\qquad\qquad\qquad\qquad$$
$$\qquad\qquad\qquad\qquad \cong
\RHom_{D}(k,\RHom_{B}(B,B))\cong \RHom_D(k,B).$$
Next we use part (c) and the Gorenstein property (AS2)
$$\RHom_D(k,B)\cong \RHom_D(k,D)\otimes_D B\cong
k(l_D)[d_D]
\otimes_D B \cong C(l_D)[d_D].$$
\end{proof}

By Lemma \ref{xxlem3.1}(e), $\RHom_B(C,B)\cong C(l_D)[d_D]$
as complexes of right $B$-modules. But we don't know
at this point whether this is an isomorphism of 
complexes of left $B$-modules.

\begin{proposition}
\label{xxprop3.2} 
Assume \textup{(E3.0.2)}. In parts
(b,c) further assume that 
\begin{equation}
\label{E3.2.1}
\RHom_B(C,B)\cong C(l_D)[d_D]
\tag{E3.2.1}
\end{equation} 
as left $C$-module.
\begin{enumerate}
\item
$\gldim B\leq \gldim D+\gldim C$. 
\item
If $D$ and $C$ satisfy \textup{(AS2)}, then so 
does $B$. Further,
$l_D+l_C=l_B$ and $d_D+d_C=d_B$.
\item
If $D$ and $C$ satisfy \textup{(AS1,2)}, then so does $B$. 
In this case $\gldim B =\gldim D+\gldim C$.
\end{enumerate}
\end{proposition}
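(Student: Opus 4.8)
The plan is to handle the three parts in order, using Lemma~\ref{xxlem3.1} as the main engine and reducing the Gorenstein statement to a single adjunction computation. Throughout I use that, since $B$ is connected graded, $\gldim B=\pdim_B k$ for the trivial module $k$.

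For part (a) I would first dispose of the trivial case: if either $\gldim D$ or $\gldim C$ is infinite there is nothing to prove, so assume both are finite. The key inputs are that $\pdim_B C=\gldim D$ (the consequence of Lemma~\ref{xxlem3.1}(d)) and that shifts and direct sums do not change projective dimension, so any free $C$-module has $\pdim_B(-)=\gldim D$ as a left $B$-module. The idea is to resolve the trivial module in two stages: take a minimal graded free resolution of the trivial $C$-module $k$ of length $\gldim C$, view it as a resolution of $k$ by left $B$-modules each of projective dimension at most $\gldim D$, and splice it against the standard homological inequality that $\pdim$ grows by at most the length of a resolution whose terms have bounded $\pdim$. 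This yields $\pdim_B k\le \gldim C+\gldim D$, which is the asserted bound.

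For part (b) the crux is the restriction--coinduction adjunction along the surjection $B\twoheadrightarrow C$. Viewing the trivial module as a $C$-module restricted to $B$, I would write
\[
\RHom_B({}_Bk,B)\cong \RHom_C\big(k,\RHom_B(C,B)\big),
\]
then substitute the standing hypothesis \eqref{E3.2.1}, that $\RHom_B(C,B)\cong C(l_D)[d_D]$ as a complex of left $C$-modules, to obtain $\RHom_C(k,C)(l_D)[d_D]$; finally the Gorenstein property (AS2) of $C$ gives $\RHom_C(k,C)=k(l_C)[d_C]$, so the expression collapses to $k(l_C+l_D)[d_C+d_D]$. This establishes (AS2) for the left trivial module with $l_B=l_C+l_D$ and $d_B=d_C+d_D$, and the right-hand Gorenstein condition follows from the symmetric computation. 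The one genuinely delicate point is precisely the use of the \emph{left} $C$-module refinement of the isomorphism: Lemma~\ref{xxlem3.1}(e) only supplies it on the right-hand $B$-side, which is why \eqref{E3.2.1} must be imposed separately, and one must keep the bimodule structures together with the complex and degree shifts straight as they pass through the adjunction.

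Part (c) is then formal bookkeeping. Assuming (AS1,2) for both $D$ and $C$, part (a) gives $\gldim B\le \gldim D+\gldim C=d_D+d_C$, using that for an algebra satisfying (AS1,2) the top nonvanishing self-$\Ext$ degree equals the global dimension. Conversely part (b) shows $\Ext^{d_D+d_C}_B(k,B)=k(l_B)\neq 0$, which forces $\pdim_B k\ge d_D+d_C$, i.e. $\gldim B\ge d_D+d_C$. Combining the two inequalities gives $\gldim B=\gldim D+\gldim C<\infty$, so $B$ satisfies (AS1); and (AS2) was already obtained in part (b). I expect essentially all the difficulty to sit in part (b), with parts (a) and (c) being standard change-of-rings arguments.
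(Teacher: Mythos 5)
Your proposal is correct, and parts (b) and (c) follow essentially the paper's own route: the paper proves (b) by exactly the adjunction
$\RHom_B(k,B)\cong\RHom_C(k,\RHom_B(C,B))\cong\RHom_C(k,C)(l_D)[d_D]\cong k(l_C+l_D)[d_C+d_D]$,
and then obtains (c) formally from (a) and (b) just as you do; you are in fact slightly more careful than the paper in flagging that the right-module half of (AS2) requires the symmetric computation. The genuine difference is in part (a). The paper also reduces to the trivial module but stays entirely in the derived category: it writes ${}_Bk\cong C\otimes^L_C k$, applies adjunction and Lemma~\ref{xxlem3.1}(d) to get $\RHom_B(k,k)\cong\RHom_C(k,\RHom_D(k,k))$, and then proves the vanishing $\Ext^n_C(k,X)=0$ for $n>\gldim C+\sup X$ by induction on $\dim_k H^*(X)$ --- which is where the finiteness hypothesis \eqref{E3.0.2}(c) on $\Ext^*_D(k,k)$ enters. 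Your version instead splices a length-$\gldim C$ graded free resolution of ${}_Ck$ against the fact $\pdim_B C=\gldim D$ (again Lemma~\ref{xxlem3.1}(d), plus the observation that $\pdim_B$ is unchanged by shifts and arbitrary direct sums) and the standard estimate for a module admitting a finite resolution by modules of bounded projective dimension. The two arguments are the same change-of-rings principle in different clothing; yours is more elementary and does not need the finite-dimensionality of $\Ext^*_D(k,k)$ at this step, while the paper's keeps everything inside the adjunction formalism it has already set up for Lemma~\ref{xxlem3.1}. Both versions rest on $\gldim B=\pdim_B k$ for connected graded algebras, which is legitimate here.
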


\begin{proof} (a) 
To prove the inequality we may assume that both
$\gldim D$ and $\gldim C$ are finite. By \eqref{E3.0.2}(c),
$\Ext^*_D(k,k)$ is finite dimensional. 

Let $X$ be a complex of graded $C$-module such that $H^*(X)$ 
is finite dimensional over $k$, then by induction on
the $k$-vector space dimension of $H^*(X)$, we have that
$\Ext^n_C(k,X)=0$ 
for $n>\gldim C+\sup X$. When $X=\RHom_D(k,k)$ (with the
$C$-module structure given by Lemma
\ref{xxlem3.1}(d)),
then $\Ext^n_C(k,\RHom_D(k,k))=0$ for all $n>\gldim
C+\gldim D$
because $\sup \RHom_D(k,k)=\gldim D$.

Viewing $C$ as a $(B,C)$-bimodule. We
have $_Bk\cong C\otimes^L_C {_Ck}$. By
$\Hom$-$\otimes$
adjunction we have
$$\RHom_B(k,k)\cong \RHom_B(C\otimes^L_C {_Ck},k)
\cong \RHom_C(k,\RHom_B(C,k)).$$
By Lemma \ref{xxlem3.1}(c), we have
$$\RHom_C(k,\RHom_B(C,k))\cong
\RHom_C(k,\RHom_D(k,k)).$$
Therefore
$$\Ext^n_B(k,k)\cong \Ext^n_C(k,\RHom_D(k,k))=0$$
for all $n>\gldim C+\gldim D$. This shows the
inequality.

(b) We use $\Hom$-$\otimes$ adjunction again
$$\RHom_B(k,B)\cong \RHom_B(C\otimes^L_C k, B)
\cong \RHom_{C}(k,\RHom_B(C,B)).$$
By hypothesis, $\RHom_B(C,B)\cong C(l_D)[d_D]$ as a
left $C$-module.
Hence 
$$\RHom_{C}(k,\RHom_B(C,B))\cong
\RHom_{C}(k,C)(l_D)[d_D]
\cong k(l_D+l_C)[d_D+d_C].$$
The assertion follows.

(c) This is a consequence of (a) and (b).
\end{proof}

\begin{theorem}
\label{xxthm3.3} Let $A$ be an Artin-Schelter regular algebra 
and let $B$ be a connected graded trimmed double extension 
$A_{P}[y_1,y_2;\sigma]$. Then $B$ is Artin-Schelter regular
and $\gldim B=\gldim A+2$.
\end{theorem}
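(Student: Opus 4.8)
The plan is to present $B$ as an extension to which Proposition \ref{xxprop3.2} applies, but with the roles of $A$ and the quotient algebra interchanged, as anticipated by the remark following Proposition \ref{xxprop1.14}. Since $B$ is a (two-sided) double extension, Remark \ref{xxrem1.4}(d) gives $p_{12}\neq 0$ and Lemma \ref{xxlem1.9} gives that $\sigma$ is invertible. I would set $D$ to be the subalgebra $C_0:=k\langle y_1,y_2\rangle/(y_2y_1-p_{12}y_1y_2-p_{11}y_1^2)$ of $B$ generated by $y_1,y_2$; because $\tau=0$ the relation involves no tail from $A$, so this subalgebra is spanned by $\{y_1^{n_1}y_2^{n_2}\}$, and since these monomials are part of the left $A$-basis of $B$ they are $k$-linearly independent, so $C_0$ genuinely embeds in $B$. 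As $p_{12}\neq 0$, the algebra $C_0$ is one of the two Artin-Schelter regular algebras of global dimension $2$. The quotient $C:=B/(C_0)_{\geq 1}B$ will be identified with $A$ exactly as in Proposition \ref{xxprop1.14}(a) with the two sides exchanged.

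Next I would verify the hypotheses \eqref{E3.0.2} for $D=C_0$. Condition (c) is immediate since $C_0$ is regular of dimension $2$. Condition (b) follows because in the trimmed case the relation $y_i r=\sum_j\sigma_{ij}(r)y_j$ gives $y_iA\subseteq Ay_1+Ay_2$, while \eqref{E1.9.1} with $\delta'=0$ gives $Ay_i\subseteq y_1A+y_2A$; hence the ideal generated by $y_1,y_2$ is two-sided and $B/(C_0)_{\geq 1}B\cong A$. For condition (a) I would argue by Hilbert series: a homogeneous $k$-basis $W$ of $A$ yields $B=\bigoplus_{w\in W}wC_0$ on the right directly from the left $A$-basis $\{y_1^{n_1}y_2^{n_2}\}$, and $B=\bigoplus_{w\in W}C_0w$ on the left from the right $A$-basis $\{y_2^{n_1}y_1^{n_2}\}$ supplied by Proposition \ref{xxprop1.13}. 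In each case $H_B(t)=H_{C_0}(t)H_A(t)$, a consequence of \eqref{E1.13.2}, forces the natural surjection from the free module onto $B$ to be an isomorphism, so $B$ is free over $C_0$ on both sides.

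The decisive step is the extra hypothesis \eqref{E3.2.1}, which here reads $\RHom_B(A,B)\cong A(l_{C_0})[d_{C_0}]$ as a complex of \emph{left} $A$-modules. Lemma \ref{xxlem3.1}(e) supplies this only as right $B$-modules, and the remark after that lemma flags that the left-module refinement is not automatic; this is precisely the obstacle the section is built to overcome, and it is resolved by Theorem \ref{xxthm2.2}. That theorem shows $\RHom_B(A,B)$ is concentrated in cohomological degree $2=d_{C_0}$, where it equals ${^{\det\sigma}A}$, with the internal degree shift $l_{C_0}$ read off from the graded resolution \eqref{E2.2.1} (matching the Gorenstein parameter of $C_0$). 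Since $\sigma$ is invertible, $\det\sigma$ is an automorphism of $A$ by Proposition \ref{xxprop2.1}(b), so the map $a\mapsto(\det\sigma)^{-1}(a)$ gives ${^{\det\sigma}A}\cong A$ as graded left $A$-modules. This identifies $\RHom_B(A,B)$ with $A(l_{C_0})[d_{C_0}]$ as left $A$-modules and establishes \eqref{E3.2.1}.

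With all hypotheses verified, Proposition \ref{xxprop3.2}(c) yields that $B$ satisfies (AS1) and (AS2) and that $\gldim B=\gldim C_0+\gldim A=\gldim A+2$. It remains to check (AS3): by \eqref{E1.13.2} one has $H_B(t)=H_A(t)/((1-t^{\deg y_1})(1-t^{\deg y_2}))$, so $B$ has finite Gelfand-Kirillov dimension (equal to $\GKdim A+2$) because the regular algebra $A$ does. Hence $B$ is Artin-Schelter regular of the stated global dimension. I expect the only genuine difficulty to be the left-module strengthening of \eqref{E3.2.1}; everything else is bookkeeping once Theorem \ref{xxthm2.2} and the fact that $\det\sigma$ is an automorphism are in hand.
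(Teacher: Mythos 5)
Your proposal is correct and follows essentially the same route as the paper: take $D=k_P[y_1,y_2]$ and $C\cong A$ in the setup \eqref{E3.0.2}, verify \eqref{E3.2.1} via Theorem \ref{xxthm2.2}(b) together with the invertibility of $\det\sigma$, and conclude by Proposition \ref{xxprop3.2} plus the Hilbert series identity \eqref{E1.13.2} for (AS3). The only cosmetic difference is that you check the two-sided freeness of $B$ over $k_P[y_1,y_2]$ by a direct basis/Hilbert-series count, whereas the paper deduces it from the vanishing of $\Tor^D_i(B,k)$ using the exactness of \eqref{E2.2.2}; both are valid.
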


\begin{proof} Let $C=A$, which is Artin-Schelter regular by
hypothesis. Let $D$ be the subalgebra $k_{P}[y_1,y_2]$. It's 
easy to see \eqref{E3.0.2}(b,c). Now we check \eqref{E3.0.2}(a). 
By symmetry, we only work on the right $D$-module $B$. Since $B$ is 
non-negatively graded, the projectivity of $B_D$ is equivalent to
the condition $\Tor_i^D(B,k)=0$ for all $i\neq 0$, which follows
from the exactness of \eqref{E2.2.2}.
 
By Theorem \ref{xxthm2.2}(b),  \eqref{E3.2.1} 
holds. By Proposition \ref{xxprop3.2} $B$ satisfies (AS1,2) and
$\gldim B=\gldim A+2$. By \eqref{E1.13.2}
$\GKdim B=\GKdim A+2$. Therefore $B$ is Artin-Schelter regular.
\end{proof}

To prove Theorem \ref{xxthm0.2} we need to pass the 
Artin-Schelter regularity from the trimmed double
extension $A_P[y_1,y_2;\sigma]$
to $A_P[y_1,y_2;\sigma,\delta,\tau]$. Note
that we don't know if $A$ and $B$ are noetherian 
in this setting. 

Let $A_P[y_1,y_2;\sigma,\delta,\tau]$ be a graded 
(or ungraded) double extension of $A$ with $d_1=\deg y_1$
and $d_2=\deg y_2$ (or $\deg y_1=\deg y_2=0$). Define a 
new grading $\newdeg y_1=d_1+1$ and $\newdeg y_2=d_2+1$ 
and $\newdeg x=\deg x$ for all $x\in A$. Using this 
grading we can define a filtration of $B$ by 
$$F_m B=\{\sum a_{n_1,n_2}y_1^{n_1}y_2^{n_2}\in B\;|\; 
\newdeg a_{n_1,n_2}+n_1\newdeg y_1+n_2\newdeg y_2\leq m\}.$$

\begin{lemma}
\label{xxlem3.4} Let $B$ be a graded (right) double 
extension $A_P[y_1,y_2;\sigma,\delta,\tau]$ of $A$.
\begin{enumerate}
\item
$\{F_m B\;|\; m\in {\mathbb Z}\}$ is a non-negative 
filtration such that the associated
graded ring $\gr_F B$ is isomorphic to $A_P[y_1,y_2;\sigma]$.
\item
Let $R_FB$ be the Rees ring associated to this filtration.
Then there is a central element $t$ of degree 1 such that
$R_FB/(t)=A_P[y_1,y_2;\sigma]$ as graded rings and 
$R_FB/(t-1)=A_P[y_1,y_2;\sigma,\delta,\tau]$ as ungraded 
rings.
\item
If $B$ is connected graded, then so are $\gr_FB$ and $R_FB$.
\end{enumerate}
\end{lemma}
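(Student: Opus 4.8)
The statement to prove is Lemma~\ref{xxlem3.4}: that the filtration $\{F_mB\}$ defined via the shifted degree $\newdeg$ is a non-negative filtration whose associated graded ring is the trimmed double extension $A_P[y_1,y_2;\sigma]$, that the Rees ring $R_FB$ has a central degree-one variable $t$ with the two specified specializations, and that connectedness is inherited. Let me sketch how I would organize this.

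Let me reconsider the structure of the argument.
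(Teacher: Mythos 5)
Your proposal contains no proof: after restating the claim you write only that you will ``reconsider the structure of the argument,'' and then stop. Nothing is verified. In particular, three things that actually need to be checked are entirely absent. First, you never show that $\{F_mB\}$ is an exhaustive, multiplicative, non-negative filtration (this is easy but must be said). Second, you never compute what the relations (R1) and (R2) become in $\gr_FB$; the whole point of the shifted grading $\newdeg y_i=\deg y_i+1$ is that the tail $\tau$ and the derivation $\delta$ land in strictly lower filtration level, so that in the associated graded ring (R1) degenerates to $y_2y_1=p_{12}y_1y_2+p_{11}y_1^2$ and (R2) to $\binom{y_1}{y_2}r=\sigma(r)\binom{y_1}{y_2}$ --- without this observation there is no reason the associated graded ring should be the \emph{trimmed} extension $A_P[y_1,y_2;\sigma]$.

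Third, and this is the only genuinely nontrivial point in the paper's own proof, you must verify Definition~\ref{xxdefn1.3}(aiii) for $\gr_FB$, i.e.\ that the monomials $y_1^{n_1}y_2^{n_2}$ remain a free left $A$-basis after passing to the associated graded ring. The paper does this by supposing a homogeneous relation $\sum a_{n_1,n_2}y_1^{n_1}y_2^{n_2}=0$ in $\gr_FB$, lifting it to an identity $\sum a_{n_1,n_2}y_1^{n_1}y_2^{n_2}=\sum b_{n_1,n_2}y_1^{n_1}y_2^{n_2}$ in $B$ with the right-hand side of strictly smaller $\newdeg$, and concluding from the left-freeness of $B$ over $A$ that all coefficients vanish. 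You would also need to supply (even if briefly) the standard identifications $R_FB/(t)\cong\gr_FB$ and $R_FB/(t-1)\cong B$ for part (b) and the connectedness statement in part (c). As it stands the proposal cannot be evaluated as a proof attempt because it makes no attempt.
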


\begin{proof} (a) Clearly $F_mB$ is a non-negative
filtration such that $B=\bigcup_m F_mB$. Since every
element in $B$ is of the form $\sum_{n_1,n_2}a_{n_1,n_2}y_1^{n_1}
y_2^{n_2}$, $\gr_F B$ is generated by $A$ and $y_1,y_2$.
The relation (R1) becomes
$$y_2y_1=p_{12}y_1y_2+p_{11}y_1^2$$
in $\gr_F B$ and (R2) becomes 
$$\begin{pmatrix}y_1\\y_2\end{pmatrix}r=\sigma(r)
\begin{pmatrix}y_1\\y_2\end{pmatrix}$$
in $\gr_FB$ which implies Definition \ref{xxdefn1.3}(aiv).
To prove $\gr_FB$ is a graded right double extension it suffices
to show Definition \ref{xxdefn1.3}(aiii). Since $\gr_FB$ is graded
we only need to show a graded version of Definition 
\ref{xxdefn1.3}(aiii). Say, $\sum_{n_1,n_2}a_{n_1,n_2}y_1^{n_1}
y_2^{n_2}=0$ in $\gr_F B$ for some homogeneous elements 
$a_{n_1,n_2}\in A$. Then 
$$\sum_{n_1,n_2}a_{n_1,n_2}y_1^{n_1}
y_2^{n_2}=\sum_{n_1,n_2}b_{n_1,n_2}y_1^{n_1}
y_2^{n_2}$$
with $\newdeg (\sum_{n_1,n_2}b_{n_1,n_2}y_1^{n_1}
y_2^{n_2})<\newdeg (\sum_{n_1,n_2}a_{n_1,n_2}y_1^{n_1}
y_2^{n_2})$. But this is impossible unless $a_{n_1,n_2}=b_{n_1,n_2}=0$
for all $n_1,n_2$. Thus we proved Definition \ref{xxdefn1.3}(aiii)
for $\gr_FB$ and hence $\gr_FB$ is a right double extension. 

If $B$ is a double extension, $\gr_FB$ is also a double extension
of $A$. The assertion follows.

(b,c) Clear.
\end{proof}

If $A_P[y_1,y_2;\sigma]$ is noetherian, then it
is well-known that the Artin-Schelter regularity passes
from $\gr_FB=A_P[y_1,y_2;\sigma]$ to 
$B:=A_P[y_1,y_2;\sigma,\delta,\tau]$. However we don't 
know if $A_P[y_1,y_2;\sigma]$ is always noetherian when 
$A$ is. So for the rest of section  we generalize some arguments
in the noetherian case to the non-noetherian case. 

\begin{lemma} 
\label{xxlem3.5}
Let $B$ be a connected graded ring with 
a central non-zero-divisor $t$ of positive degree. If $B/(t)$ is
Artin-Schelter regular, then so is $B$. Further
$\gldim B=\gldim B/(t)+1$.

As a consequence, if $\gr_FB:=A_P[y_1,y_2;\sigma]$ is 
Artin-Schelter regular, so is $R_FB$. 
\end{lemma}

\begin{proof} First of all $\GKdim B=\GKdim B/(t)+1<\infty$.
To prove (AS1,AS2) we set $D=k[t]$. Since $t$ is a central 
non-zero-divisor, \eqref{E3.0.2} holds. By Rees' lemma,
\eqref{E3.2.1} holds. The assertion follows from Proposition
\ref{xxprop3.2}.
\end{proof}

Next we want to show that if $\gr_F B$ (or $R_FB$) is 
Artin-Schelter regular and if $B$ is connected graded, 
then $B$ is Artin-Schelter regular.

\begin{theorem}
\label{xxthm3.6} Let $B$ be a connected graded algebra with a
filtration such that $\gr_FB$ is a connected graded Artin-Schelter
algebra. Then $B$ is Artin-Schelter regular and $\gldim B=
\gldim \gr_F B$.

As a consequence if $A_P[y_1,y_2;\sigma]$ is Artin-Schelter
regular, then so is a connected graded double extension 
$A_P[y_1,y_2;\sigma,\delta,\tau]$.
\end{theorem}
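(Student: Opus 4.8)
The plan is to pass from the associated graded ring (or Rees ring) to $B$ itself by means of a standard filtered-to-graded argument, using the central variable $t$ from Lemma \ref{xxlem3.4} as the bridge. The point of Theorem \ref{xxthm3.6} is to upgrade the regularity of $\gr_F B$ to the regularity of $B$ without assuming noetherianness, so the argument must avoid all appeals to the noetherian hypothesis (e.g. no use of the fact that filtered pieces are finitely generated over a noetherian base). The consequence then follows immediately from Theorem \ref{xxthm3.3}: that theorem shows the trimmed double extension $A_P[y_1,y_2;\sigma]$ is Artin-Schelter regular, and $A_P[y_1,y_2;\sigma]\cong\gr_F B$ by Lemma \ref{xxlem3.4}(a), so the hypothesis of Theorem \ref{xxthm3.6} is met.

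First I would form the Rees ring $R_F B$ associated to the filtration. By Lemma \ref{xxlem3.4}(b,c), $R_F B$ is connected graded, contains a central non-zero-divisor $t$ of positive degree with $R_F B/(t)\cong\gr_F B$, and satisfies $R_F B/(t-1)\cong B$. Since $\gr_F B$ is Artin-Schelter regular by hypothesis, Lemma \ref{xxlem3.5} applies to the pair $(R_F B,t)$ and yields that $R_F B$ is itself Artin-Schelter regular with $\gldim R_F B=\gldim\gr_F B+1$. Thus the regularity has already been propagated from $\gr_F B$ to $R_F B$; the remaining task is to descend from $R_F B$ to the dehomogenization $B=R_F B/(t-1)$.

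For that descent I would verify the three conditions (AS1)--(AS3) directly. Finite Gelfand-Kirillov dimension (AS3) is the easiest: the Hilbert-series relation for a double extension (equation \eqref{E1.13.2}) gives $\GKdim B=\GKdim A+2<\infty$, or alternatively one reads it off from $\GKdim B=\GKdim\gr_F B$ since passing to the associated graded does not change growth. For finite global dimension (AS1), I would use the fact that a minimal free resolution of the trivial module over $R_F B$ can be pushed down through the central element $t-1$; concretely, applying $-\otimes_{R_F B}B$ (equivalently setting $t=1$) to a finite graded free resolution of $k$ over $R_F B$ produces a finite free resolution of $k$ over $B$, bounding $\gldim B\le\gldim R_F B-1=\gldim\gr_F B$, and a dimension count via the $\Tor$-groups gives equality. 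The Gorenstein condition (AS2) is handled the same way: the symmetric finite resolution \eqref{E3.0.1} for $R_F B$ specializes under $t\mapsto 1$, and because $t$ is a non-zero-divisor the single nonzero $\Ext$-group $\Ext^{d}_{R_F B}(k,R_F B)$ descends to the corresponding $\Ext$-group for $B$, preserving the one-dimensional-in-top-degree shape required by (AS2).

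\textbf{The main obstacle} I expect is the global-dimension and Gorenstein descent carried out \emph{without} the noetherian hypothesis, where the usual textbook shortcut (lift a finite resolution over $\gr_F B$ to a filtered resolution over $B$ using that finitely generated filtered modules have well-behaved associated graded) is not available verbatim. The care needed is to ensure that the resolutions in play are finitely generated in each homological degree---which is guaranteed here because $\gr_F B$ is regular and hence the trivial module admits a \emph{finite} minimal free resolution \eqref{E3.0.1} with each term of finite rank---and then to check that specializing at $t-1$ keeps exactness. Since $t-1$ is a non-zero-divisor on every free $R_F B$-module and the resolution terms are free, the complex $P^\bullet\otimes_{R_F B}B$ stays exact by the long exact sequence in $\Tor$, so no noetherian input is required; the whole descent is purely homological, driven by the centrality and regularity of the single element $t$.
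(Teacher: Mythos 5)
Your first half matches the paper exactly: form the Rees ring $R_FB$, use Lemma \ref{xxlem3.4} and Lemma \ref{xxlem3.5} to conclude that $R_FB$ is Artin-Schelter regular, and reduce to descending from $R_FB$ to $B=R_FB/(t-1)$. The gap is in the descent. You propose to apply $-\otimes_{R_FB}B$ (``set $t=1$'') to a finite graded free resolution of the trivial $R_FB$-module $k$ and claim this yields a finite free resolution of $k_B$. It does not: $t$ acts as zero on the graded trivial module $k=R_FB/(R_FB)_{\geq 1}$, so $t-1$ acts as $-1$, which is invertible, and hence $k\otimes_{R_FB}B=k/(t-1)k=0$. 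The specialized complex is an exact complex of free $B$-modules resolving the zero module, and it carries no information about $k_B$. The same false premise undermines your (AS2) argument, since $\Ext^d_{R_FB}(k,R_FB)$ ``descending under $t\mapsto 1$'' would again be a statement about the zero module. The conflation here is between the graded trivial module over $R_FB$ and the Rees module of the trivial $B$-module; these differ, and the difference is precisely the content of the exact sequence $0\to N(-1)\xrightarrow{t}N\to k\to 0$ where $N=\bigoplus_{n\geq 0}k$ with $t$ acting as the shift.

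The missing idea is homogenization in the other direction, which is what the paper does: start from the $B$-module $M=k_B$, form the graded $R_FB$-module $N=\bigoplus_{n\geq 0}M$ on which $t$ acts by the identity shift, prove that $N$ admits a finite graded free resolution over $R_FB$ (this is the genuinely non-noetherian step, carried out via the observation that $t$ annihilates $\Tor^{R_FB}_i(k,N)$, so these Tor groups embed into the finite-dimensional $\Tor^{R_FB}_i(k,k^{\oplus d})$), compute $\Ext^i_{R_FB}(N,R_FB)$ from the long exact sequence attached to $0\to N(-1)\xrightarrow{t}N\to k^{\oplus d}\to 0$, and only then invert $t$, using $R_FB[t^{-1}]\cong B[t^{\pm 1}]$ and \cite[Lemma 2.2]{YZ} to identify $\Ext^i_{R_FB}(N,R_FB)[t^{-1}]$ with $\Ext^i_B(M,B)[t^{\pm 1}]$. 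For (AS1) the paper likewise avoids your specialization and instead uses the localization inequality $\gldim B\leq \gldim R_FB[t^{-1}]\leq \gldim R_FB$, with the exact value $\gldim R_FB-1$ recovered at the end from the vanishing of $\Ext^{\gldim R_FB}_B(k,B)$ via the argument of \cite[Proposition 3.1(a)]{SZ}. As written, your proof does not establish (AS1) or (AS2) for $B$.
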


\begin{proof} Let $t$ be the central non-zero-divisor 
$1\in F_1B\subset R_FB$. Then we have $R_FB/(t)=\gr_FB$. Since 
$\gr_FB$ is Artin-Schelter regular, by Lemma \ref{xxlem3.5}, 
the Rees ring $R_FB$ is Artin-Schelter regular. Since
the localization $R_FB[t^{-1}]$ is isomorphic to 
$B[t^{\pm 1}]$, we have 
\begin{equation}
\label{E3.6.1}
\gldim B=\gr.\gldim B[t^{\pm 1}]\leq 
\gldim R_FB[t^{-1}]\leq \gldim R_FB<\infty.
\tag{E3.6.1}
\end{equation}
Thus (AS1) holds for $B$. If $t$ is a central non-zero-divisor in 
any algebra $A$, $\GKdim A=\GKdim A[t^{-1}]$ by \cite[8.2.13]{MR}. 
Hence, by \cite[8.2.7(iii)]{MR},
$$\GKdim B+1=\GKdim B[t^{\pm 1}]=\GKdim R_FB[t^{-1}]
=\GKdim R_FB<\infty.$$
Thus (AS3) holds for $B$. It remains to show (AS2) for $B$.

Let $M$ be a left $B$-module of $k$-dimension $d<\infty$. (For 
example, $M$ is the trivial $B$-module $B/B_{\geq 1}$). Let 
$N$ denote the graded vector space $\oplus_{n\geq 0}M$,
namely, $N_n=M$ for $n\geq 0$ and $N_n=0$ for $n<0$. We 
define a left graded $R_FB$-module structure as follows: for 
$x\in M=N_{n}$ and $a\in F_mB$, $a*x=ax\in M=(N)_{n+m}$.
Then we have an exact sequence of graded left $R_FB$-modules
\begin{equation}
\label{E3.6.2}
0\to N(-1)\xrightarrow{t} N\to k^{\oplus d}\to 0.
\tag{E3.6.2}
\end{equation}
Since $R_FB$ is Artin-Schelter regular, its trivial module
$k$ has a finite free resolution \eqref{E3.0.1}. This implies
that each $\Tor^{R_FB}_i(k,k)$ is finite dimensional for every 
$i$. We now consider the Tor group $\Tor^{R_FB}_i(k,N)$. Since $t$ 
is central, there is a $t$-action on $\Tor^{R_FB}_i(k,N)$ which is 
induced by the map in \eqref{E3.6.2}. Since $kt=0=tk$,
$t\Tor^{R_FB}_i(k,N)=0$. Applying $\Tor^{R_FB}(k,-)$ to the short sequence 
\eqref{E3.6.2}, we have a long exact sequence
$$\cdots\to
\Tor^{R_FB}_i(k,N(-1))\xrightarrow{0}\Tor^{R_FB}_i(k,N)
\to \Tor^{R_FB}_i(k,k^{\oplus d})\to \cdots$$
where the zero map is induced by the multiplication by $t$.
Hence $\Tor^{R_FB}_i(k,N)$ is a subspace of 
$\Tor^{R_FB}_i(k,k^{\oplus d})$ which is finite dimensional
over $k$. Hence the left $R_FB$-module $N$ has a finite 
graded free resolution. 
We compute $\Ext^i_{R_FB}(N,R_FB)$ by using a finite graded free 
resolution of $N$, then each $\Ext^i_{R_FB}(N,R_FB)$ is bounded 
below. Applying $\Ext_{R_FB}(-,R_FB)$ to \eqref{E3.6.2} we obtain 
a long exact sequence
$$\cdots\to \Ext^i_{R_FB}(k^{\oplus d},R_FB)\to \Ext^i_{R_FB}(N,R_FB)
\xrightarrow{f_i} \Ext^i_{R_FB}(N(-1),R_FB)\qquad$$
$$\to \Ext^{i+1}_{R_FB}(k^{\oplus d},R_FB)\to\cdots.
\qquad\qquad\qquad \qquad\qquad\qquad\qquad\qquad\qquad$$
where $f_i$ is induced by the multiplication by $t$. 
If $i\neq \gldim R_FB-1$, then $\Ext^{i+1}_{R_FB}(k^{\oplus d},R_FB)=0$.
The map $f_i=0$ since $\Ext^i_{R_FB}(N,R_FB)$ is bounded below.
When $i=\gldim R_FB-1:=p$, then we have 
$$0\to \Ext^p_{R_FB}(N,R_FB)
\xrightarrow{f_p} \Ext^p_{R_FB}(N(-1),R_FB)\to
k^{\oplus d}(l)\to 0.$$
Hence $f_p$ is injective and $\Ext^p_{R_FB}(N,R_FB)$ has
the Hilbert series equal to $dt^{-l}(1-t)^{-1}$. This implies
that the localization $\Ext^p_{R_FB}(N,R_FB)[t^{-1}]=X[t^{\pm 1}]$
for some $d$-dimensional left $B$-module $X$.

Since $N$ has a finite free resolution (hence pseudo-coherent 
in the sense of \cite{YZ}), by \cite[Lemma 2.2(a)]{YZ},
we have, for every $j$,
$$\Ext^j_{R_FB}(N,R_FB)[t^{-1}]
\cong \Ext^j_{R_FB}(N,R_FB[t^{-1}]),$$
and by \cite[Lemma 2.2(b)]{YZ}
$$\Ext^j_{R_FB}(N,R_FB[t^{-1}])\cong 
\Ext^j_{R_FB[t^{-1}]}(N[t^{-1}],R_FB[t^{-1}]).$$
Since $N[t^{-1}]=M[t^{\pm t}]$ and $\grmod 
R_FB[t^{-1}]\cong \mod B$, we have
$$\Ext^j_{R_FB[t^{-1}]}(N[t^{-1}],R_FB[t^{-1}])
\cong \Ext^j_{B}(M, B)[t^{\pm 1}].$$
Combining the results we proved in the last paragraph, we have
$$\Ext^j_{B}(M, B)[t^{\pm 1}]=\begin{cases} 0& i\neq \gldim R_FB-1 \\
X[t^{\pm 1}]&i=\gldim R_FB-1 \end{cases}.$$
Hence 
$$\Ext^j_{B}(M, B)=\begin{cases} 0& i\neq \gldim R_FB-1 \\
X&i=\gldim R_FB-1 \end{cases}.$$
If we take $M$ to be the trivial module $k=B/B_{\geq 1}$, 
this is (AS2) for $B$ and $\gldim B\geq \gldim R_FB-1$. 

To complete (AS2) we need to check that $\gldim B=\gldim R_FB-1$.
By  \eqref{E3.6.1}, $\gldim B\leq \gldim R_FB=r$. Consider the minimal
graded free resolution of the trivial $B$-module $k$,
$$0\to P^{-r}\to P^{-r+1}\to \cdots\to P^{-1}\to B\to k\to 0.$$
The proof of \cite[Proposition 3.1(a)]{SZ} shows that
if $\Ext^r_B(k,B)=0$ then $P^{-r}=0$. Thus $\gldim B=r-1$
as desired.

The consequence follows from the fact that
$\gr A_{P}[y_1,y_2;\sigma,\delta,\tau]=A_{P}[y_1,y_2;\sigma]$.
\end{proof}

Now we are ready to prove Theorem \ref{xxthm0.2}.

\begin{proof}[Proof of Theorem \ref{xxthm0.2}]
Let $A$ be an Artin-Schelter regular algebra
and let $B=A_{P}[y_1,y_2;\sigma,\delta,\tau]$ be a 
connected graded double extension of $A$. Let $C$ be
the trimmed double extension $A_{P}[y_1,y_2;\sigma]$. 
By Theorem \ref{xxthm3.3}, $C$ is Artin-Schelter regular
and $\gldim C=\gldim A+2$. By Theorem \ref{xxthm3.6}, 
$B$ is Artin-Schelter regular and $\gldim B=\gldim C$.
\end{proof}

\section{Examples and questions}
\label{xxsec4}

The first example is fairly easy. 

\begin{example}
\label{xxex4.1} Let $A=k[x]$. There are so many different right 
double extensions of $A$. It is possible to classify 
all connected graded double extensions $A_{P}[y_1,y_2;\sigma,
\delta,\tau]$ with $\deg x=\deg y_1=\deg y_2=1$, but it 
makes no sense to list all since these algebras are 
all well-known regular algebra of global dimension 3 studied by 
Artin and Schelter \cite{AS}. We only give a few examples to 
indicate that the DE-data can be different kinds.

The relations (R1) and (R2) can be written as 
$$\begin{aligned}
y_2y_1&=p_{12}y_1y_2+p_{11}y_1^2+a xy_1+b xy_2+c x^2,\\
y_1 x &=d xy_1+e xy_2+ f x^2,\\
y_2 x &=g xy_1+h xy_2+ i x^2.
\end{aligned}
$$
The relations (R3) are equivalent to the fact that the overlap 
between the above three relations can be resolved. Since 
$p_{12}\neq 0$, we may choose $P$ to be $(p,0)$ or $(1,1)$. 
The invertibility of $\sigma$ is equivalent to the condition 
$dh-ge\neq 0$. The details of (R3) are omitted here. Here
are a few examples which we only list the defining relations
of $B$. Recall that $B$ is generated by $x,y_1,y_2$.

\noindent
{\bf Algebra $B^1:=B^1(p,a,b,c)$}, where $a,b,c,p\in k$ and
$p\neq 0, b\neq 0,1$. 
$$\begin{aligned}
y_2y_1&=py_1y_2+{\frac{bc}{1-b}}(pb-1)xy_1+a x^2,\\
y_1 x &=b xy_1,\\
y_2 x &=b^{-1} xy_2+ c x^2.
\end{aligned}
$$
The homomorphism $\sigma$ is determined by $\sigma(x)=
\begin{pmatrix} bx&0\\0&b^{-1}x\end{pmatrix}$. In this case
$\sigma_{12}=\sigma_{21}=0$ and $\sigma_{11}$ and
$\sigma_{22}$ are algebra automorphisms. The derivation
is determined by $\delta(x)=\begin{pmatrix} 0\\cx^2\end{pmatrix}$.
The parameter $P$ is $(p,0)$ and the tail $\tau$ is $\{
{\frac{bc}{1-b}}(pb-1)x,0,ax^2\}$. We can use the linear 
transformation $y_2\to y_2+\frac{bc}{b-1}x$ to make $c=0$.
This means that $B^1(1,p,a,b,c)\cong B^1(1,p,a,b,0)$.

\noindent
{\bf Algebra $B^2:=B^2(a,b,c)$}, where $a,b,c\in k$ and $b\neq 0$.
$$\begin{aligned}
y_2y_1&=-y_1y_2+a x^2,\\
y_1 x &=b^{-1} xy_2+ c x^2,\\
y_2 x &=bxy_1+ (-bc) x^2.
\end{aligned}
$$
The homomorphism $\sigma$ is determined by $\sigma(x)=
\begin{pmatrix} 0&b^{-1}x\\bx&0\end{pmatrix}$. By induction,
one sees that $\sigma(x^n)=
\begin{pmatrix} 0&b^{-1}x^n\\bx^n&0\end{pmatrix}$ when $n$ is odd
and $\sigma(x^n)=
\begin{pmatrix} x^n&0\\0&x^n\end{pmatrix}$ when $n$ is even.
In this case $\sigma_{11}$
and $\sigma_{22}$ are not algebra homomorphisms of $A$.  The derivation
is determined by $\delta(x)=\begin{pmatrix} cx^2\\-bcx^2\end{pmatrix}$.
The parameter $P$ is $(-1,0)$ and the tail $\tau$ is $\{0,0,ax^2\}$.
It is easy to see that $B^2(a,b,c)$ is isomorphic to
$B^2(ab^{-1},1,c)$.

For most $(a,b,c)$ (for example for $(a,b,c)=(1,1,0)$), the algebra 
$B^2(a,b,c)$ is not an Ore extension of any regular algebra of 
global dimension 2. One can prove this assertion by direct computations 
and the fact that $x$ is normalizing. Therefore $B^2(a,b,c)$ is not
an iterated Ore extension of $k[x]$.

\noindent
{\bf Algebra $B^3:=B^3(a)$}, where $a\in k$ and $a\neq 0$.
$$\begin{aligned}
y_2y_1&=y_1y_2,\\
y_1 x &=axy_1+ xy_2,\\
y_2 x &=axy_2.
\end{aligned}
$$
The homomorphism $\sigma$ is determined by $\sigma(x)=
\begin{pmatrix} ax&x\\0&ax\end{pmatrix}$. This $\sigma$ is
``upper-triangular''. The derivation
is zero. The parameter $P$ is $(1,0)$ and the tail 
$\tau$ is $\{0,0,0\}$.

\noindent
{\bf Algebra $B^4:=B^4(a,b,c)$}, where $a,b,c\in k$ and $b\neq -1$.
$$\begin{aligned}
y_2y_1&=y_1y_2+y_1^2+axy_1+\frac{b}{1+b}xy_2+cx^2,\\
y_1 x &=xy_1+ bx^2,\\
y_2 x &=(2+2b^{-1})xy_1+xy_2.
\end{aligned}
$$
The homomorphism $\sigma$ is determined by $\sigma(x)=
\begin{pmatrix} x&0\\(2+2b^{-1})x&x\end{pmatrix}$. 
The derivation is determined by 
$\delta(x)=\begin{pmatrix} bx^2\\0\end{pmatrix}$. 
The parameter $P$ is $(1,1)$ and the tail 
$\tau$ is $\{ax,\frac{b}{1+b}x,cx^2\}$.

One can see from these examples that the DE-data can be
all different even starting with a very simple 
algebra $A=k[x]$.

Let $B$ denote any of the algebras $B^i$. The element $x$ is 
normalizing in $B$ and $B/(x)$ is a regular algebra of global dimension 2. 
These algebras are called normal extensions. Except for 
$B^2$, these algebras are also iterated Ore extensions of
$k[x]$. There are possible isomorphisms between these algebras
and we refer to \cite{AS, ATV1, ATV2} for the classification and 
general properties of Artin-Schelter regular algebras of global 
dimension 3.
\end{example}

The second example seems new. It is Artin-Schelter 
regular of global dimension 4. Many other algebras are 
given in \cite{ZZ}.

\begin{example}
\label{xxex4.2}
Let $h$ be any nonzero scalar in $k$. Let $B(h)$ denote 
the graded algebra generated by $x_1,x_2,y_1,y_2$ and
subject to the following conditions
\begin{equation}
\label{E4.2.1}
\begin{aligned}
x_2x_1&=-x_1x_2\\
y_2y_1&=-y_1y_2\\
y_1x_1&=h(x_1y_1+     x_2y_1+     x_1y_2\quad \qquad)\\
y_1x_2&=h(\qquad\qquad\quad \;\;  +x_1y_2 \quad\qquad)\\
y_2x_1&=h(\qquad\; \; + x_2y_1 \quad \quad\qquad\qquad)\\
y_2x_2&=h(\qquad\;\;  -x_2y_1-x_1y_2+x_2y_2).
\end{aligned}
\tag{E4.2.1}
\end{equation}
If we use the notation in \cite{ZZ}, the set of the last four 
relations is associated to the matrix
$$\Sigma=h \begin{pmatrix} 1&1&1&0\\0&0&1&0\\
                           0&1&0&0\\0&-1&-1&1
\end{pmatrix}.$$
\end{example}

We will work out some properties of $B(h)$. Clearly
$B(h)$ is a connected graded algebra where $\deg x_1=
\deg x_2=\deg y_1=\deg y_2=1$. And it is quadratic.
We will see that it is Koszul. If we define
$\deg x_1=\deg x_2=(1,0)$ and $\deg y_1=\deg y_2=(0,1)$,
then $B(h)$ is ${\mathbb Z}^2$-graded. The following
lemma shows that $B(h)$ is very symmetric between $x_i$'s
and $y_i$'s.

\begin{lemma}
\label{xxlem4.3} Let $B(h)$ be defined as above.
\begin{enumerate}
\item
$B(h)$ has an automorphism determined by $f(x_1)=x_2$, 
$f(x_2)=-x_1$, $f(y_1)=y_2$ and $f(y_2)=-y_1$.
\item
$B(h)$ has an anti-automorphism determined by
$g(x_1)=y_1$, $g(x_2)=y_2$, $g(y_1)=x_1$ and $g(y_2)=x_2$.
\end{enumerate}
\end{lemma}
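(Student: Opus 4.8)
The plan is to realize both $f$ and $g$ as maps on the free algebra $F=k\langle x_1,x_2,y_1,y_2\rangle$ and then to show that each descends to the quotient $B(h)=F/I$, where $I$ is the two-sided ideal generated by the six quadratic relations in \eqref{E4.2.1}. For part (a) I would take $f$ to be the unique algebra endomorphism of $F$ extending the given assignment on the four generators, and for part (b) I would take $g$ to be the unique algebra anti-endomorphism of $F$, that is $g(uv)=g(v)g(u)$, extending the assignment on generators. Both exist and are unique precisely because $F$ is free on $\{x_1,x_2,y_1,y_2\}$.

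The key step is to verify $f(I)\subseteq I$ and $g(I)\subseteq I$. Because $f$ is an algebra homomorphism one has $f(a r b)=f(a)f(r)f(b)$, and because $g$ is an anti-homomorphism one has $g(a r b)=g(b)g(r)g(a)$; in either case the image of the ideal $I$ is contained in the two-sided ideal generated by the images of the six relation elements. Hence it suffices to apply $f$ (respectively $g$) to each of the six relations and check that the result again lies in $I$. In fact the relations are arranged so that each of $f,g$ merely permutes them up to sign. A direct computation shows that $f$ sends the first and second relations to their own negatives, interchanges the third and sixth relations, and sends the fourth and fifth relations to the negatives of one another; likewise $g$ interchanges the first and second relations, interchanges the fourth and fifth, and fixes the third and sixth. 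I would record these six short computations, which use only the generator values together with, for $g$, the order reversal before simplifying. Once this is done, $f$ and $g$ factor through $B(h)$.

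Finally I would check invertibility on generators. One computes $f^2=-\id$, hence $f^4=\id$, so the induced endomorphism of $B(h)$ is bijective and $f$ is an automorphism; and $g^2=\id$, so $g$ is an involutive bijection and therefore an anti-automorphism. There is no genuine obstacle in this argument: the whole content is the bookkeeping in the middle step, and the only points demanding care are tracking the signs coming from $f(x_2)=-x_1$, $f(y_2)=-y_1$ and, for $g$, reversing the order of every product before matching it against a defining relation.
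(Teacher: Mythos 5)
Your proposal is correct and is exactly the verification the paper leaves to the reader (its proof of this lemma is literally ``Easy.''): lift $f$ and $g$ to the free algebra, check that the six defining relations are permuted up to sign as you describe (I verified each of the six computations, and your bookkeeping of signs and of which relations are swapped or fixed is accurate), and conclude bijectivity from $f^4=\id$ and $g^2=\id$. Nothing is missing.
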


\begin{proof} Easy.
\end{proof}

Next we show that $B(h)$ is a graded double extension
of $A:=k_{-1}[x_1,x_2]=k\langle x_1,x_2\rangle/(x_1x_2+x_2x_1)$. 
First of all, $P=(-1,0)$ and $\delta=0$
and $\tau=(0,0,0)$. We only need to define algebra homomorphism 
$\sigma:A\to M_2(A)$. Since $A$ is generated by $x_1$ and $x_2$,
we only need to assign $\sigma(x_1)$ and $\sigma(x_2)$.

\begin{lemma}
\label{xxlem4.4} Let 
$\sigma(x_1)=h\begin{pmatrix} x_1+x_2&x_1\\x_2&0\end{pmatrix}$
and 
$\sigma(x_2)=h\begin{pmatrix} 0&x_1\\-x_2&-x_1+x_2\end{pmatrix}$.
Then $\sigma$ defines an algebra homomorphism $A\to M_2(A)$.
And if we write $\sigma=\begin{pmatrix} \sigma_{11}&\sigma_{12}
\\ \sigma_{21}&\sigma_{22}\end{pmatrix}$, then $\sigma_{ij}$
is not an algebra homomorphism for each pair $(i,j)$.
\end{lemma}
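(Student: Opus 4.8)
The plan is to treat the two assertions separately, each reducing to a short matrix computation in $M_2(A)$.

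For the first assertion, I would invoke the universal property of the presented algebra $A=k\langle x_1,x_2\rangle/(x_1x_2+x_2x_1)$. Any assignment of $x_1,x_2$ to elements of $M_2(A)$ extends uniquely to an algebra homomorphism $k\langle x_1,x_2\rangle\to M_2(A)$, and this descends to $A$ precisely when the single defining relation is killed, i.e. when $\sigma(x_1)\sigma(x_2)+\sigma(x_2)\sigma(x_1)=0$ in $M_2(A)$. So the whole content of part one is the identity
$$\sigma(x_1)\sigma(x_2)=-\sigma(x_2)\sigma(x_1).$$
I would compute each product entrywise, multiplying the two given matrices and repeatedly applying the relation $x_2x_1=-x_1x_2$ inside $A$ to reduce each entry to a scalar multiple of $x_1x_2$. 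A direct check gives $\sigma(x_1)\sigma(x_2)=-h^2x_1x_2\,I_2$ and $\sigma(x_2)\sigma(x_1)=h^2x_1x_2\,I_2$, where $I_2$ is the identity matrix, so their sum vanishes and $\sigma$ is a well-defined algebra homomorphism $A\to M_2(A)$.

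For the second assertion, I would read off the four component maps on generators from the given matrices, namely $\sigma_{11}(x_1)=h(x_1+x_2)$, $\sigma_{11}(x_2)=0$, $\sigma_{12}(x_1)=\sigma_{12}(x_2)=hx_1$, $\sigma_{21}(x_1)=hx_2$, $\sigma_{21}(x_2)=-hx_2$, $\sigma_{22}(x_1)=0$, and $\sigma_{22}(x_2)=h(x_2-x_1)$. The efficient move is to test all four maps simultaneously against the single element $x_1x_2$. From part one, $\sigma(x_1x_2)=\sigma(x_1)\sigma(x_2)=-h^2x_1x_2\,I_2$, so the true component values are $\sigma_{11}(x_1x_2)=\sigma_{22}(x_1x_2)=-h^2x_1x_2$ and $\sigma_{12}(x_1x_2)=\sigma_{21}(x_1x_2)=0$. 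If some $\sigma_{ij}$ were an algebra homomorphism it would satisfy $\sigma_{ij}(x_1x_2)=\sigma_{ij}(x_1)\sigma_{ij}(x_2)$; computing the right-hand sides gives $0$, $h^2x_1^2$, $-h^2x_2^2$, and $0$ for $(i,j)=(1,1),(1,2),(2,1),(2,2)$ respectively. In each case this disagrees with the true value above, so none of the $\sigma_{ij}$ is multiplicative.

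The argument relies only on $h\neq 0$ together with the fact that $x_1^2$, $x_2^2$, and $x_1x_2$ are nonzero in $A$; this is immediate since the $(-1)$-quantum plane $A$ has the $k$-basis $\{x_1^ix_2^j\}_{i,j\geq 0}$. The only real obstacle is bookkeeping: keeping the sign conventions straight and applying $x_2x_1=-x_1x_2$ in exactly the right places when expanding the entries of the two matrix products. No structural difficulty is expected beyond this routine care.
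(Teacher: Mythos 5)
Your proposal is correct and follows essentially the same route as the paper: a direct matrix computation showing $\sigma(x_1)\sigma(x_2)+\sigma(x_2)\sigma(x_1)=0$ for the first claim, and a failure-of-multiplicativity test on a single degree-two element for the second. The only cosmetic difference is that you test the components on $x_1x_2$ (exploiting $\sigma(x_1x_2)=-h^2x_1x_2\,I_2$) where the paper uses $x_1^2$; both checks are equally valid given that $x_1^2$, $x_2^2$, $x_1x_2$ are nonzero in $A$.
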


\begin{proof} For the first assertion we need to show that
$\sigma(x_1)\sigma(x_2)+\sigma(x_2)\sigma(x_1)=0$.
$$\begin{aligned}
h^{-2}(\sigma&(x_1)\sigma(x_2)+\sigma(x_2)\sigma(x_1))\\
&=
\begin{pmatrix} x_1+x_2&x_1\\x_2&0\end{pmatrix}
\begin{pmatrix} 0&x_1\\-x_2&-x_1+x_2\end{pmatrix}
+
\begin{pmatrix} 0&x_1\\-x_2&-x_1+x_2\end{pmatrix}
\begin{pmatrix} x_1+x_2&x_1\\x_2&0\end{pmatrix}\\
&=\begin{pmatrix} -x_1x_2&0\\0&x_2x_1\end{pmatrix}
+\begin{pmatrix} x_1x_2&0\\0&-x_2x_1\end{pmatrix}\\
&=0.
\end{aligned}
$$
So $\sigma$ is an algebra homomorphism $A\to M_2(A)$.
For the second assertion we note that 
$$\begin{pmatrix} \sigma_{11}(x_1)&\sigma_{12}(x_1)
\\ \sigma_{21}(x_1)&\sigma_{22}(x_1)\end{pmatrix}
=h\begin{pmatrix} x_1+x_2&x_1\\x_2&0\end{pmatrix}
$$
and
$$\begin{pmatrix} \sigma_{11}(x_1^2)&\sigma_{12}(x_1^2)
\\ \sigma_{21}(x_1^2)&\sigma_{22}(x_1^2)\end{pmatrix}
=h^2\begin{pmatrix} (x_1+x_2)^2+x_1x_2&(x_1+x_2)x_1\\x_2(x_1+x_2)&
x_2x_1\end{pmatrix}.
$$
Therefore $\sigma_{ij}(x_1^2)\neq \sigma_{ij}(x_1)^2$
for each pair $(i,j)$.
\end{proof}

Restricted to the generators we
$$\sigma_{11}\begin{pmatrix}x_1\\ x_2 \end{pmatrix}:=
\begin{pmatrix}\sigma_{11}(x_1)\\ \sigma_{11}(x_2)
\end{pmatrix}=h\begin{pmatrix} 1&1 \\0&0\end{pmatrix}
\begin{pmatrix}x_1\\ x_2 \end{pmatrix}
$$
$$\sigma_{12}\begin{pmatrix}x_1\\ x_2 \end{pmatrix}:=
\begin{pmatrix}\sigma_{12}(x_1)\\ \sigma_{12}(x_2)
\end{pmatrix}=h\begin{pmatrix} 1&0 \\1&0\end{pmatrix}
\begin{pmatrix}x_1\\ x_2 \end{pmatrix}
$$
$$\sigma_{21}\begin{pmatrix}x_1\\ x_2 \end{pmatrix}:=
\begin{pmatrix}\sigma_{21}(x_1)\\ \sigma_{21}(x_2)
\end{pmatrix}=h\begin{pmatrix} 0&1 \\0&-1\end{pmatrix}
\begin{pmatrix}x_1\\ x_2 \end{pmatrix}
$$
$$\sigma_{22}\begin{pmatrix}x_1\\ x_2 \end{pmatrix}:=
\begin{pmatrix}\sigma_{22}(x_1)\\ \sigma_{22}(x_2)
\end{pmatrix}=h\begin{pmatrix} 0&0 \\-1&1\end{pmatrix}
\begin{pmatrix}x_1\\ x_2 \end{pmatrix}
$$

\begin{lemma}
\label{xxlem4.5} The date $\{P,\sigma,\delta,\tau\}$
satisfies \textup{(R3)} for the generating set $\{x_1,x_2\}$.
As a consequence, the algebra $B(h)$ is a right
double extension of $A$.
\end{lemma}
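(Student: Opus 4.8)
\emph{Proof proposal.} The plan is to check relations \eqref{R3.1}--\eqref{R3.3} on the two generators $x_1,x_2$ and then to quote Proposition \ref{xxprop1.11}. Since the data here has $\delta=0$ and $\tau=0$, Lemma \ref{xxlem1.10}(d) makes \eqref{R3.4}--\eqref{R3.6} automatic, so only \eqref{R3.1}, \eqref{R3.2}, \eqref{R3.3} need verification. As $P=(-1,0)$ (so $p_{12}=-1$, $p_{11}=0$), these are exactly the relations \eqref{E2.1.1}--\eqref{E2.1.3} with $p=-1$, namely
$$\sigma_{21}\circ\sigma_{11}=-\sigma_{11}\circ\sigma_{21},\qquad \sigma_{22}\circ\sigma_{12}=-\sigma_{12}\circ\sigma_{22},$$
$$\sigma_{21}\circ\sigma_{12}=\sigma_{22}\circ\sigma_{11}-\sigma_{11}\circ\sigma_{22}+\sigma_{12}\circ\sigma_{21}.$$

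First I would exploit that $\sigma$ is graded with $\sigma(A_1)\subseteq M_2(A_1)$, so each $\sigma_{ij}$ preserves degree and restricts to a $k$-linear endomorphism of the two-dimensional space $A_1=kx_1+kx_2$. By the formulas displayed immediately before the lemma, $\sigma_{ij}|_{A_1}$ is left multiplication by $hM_{ij}$ on coordinate columns in the basis $\{x_1,x_2\}$, where
$$M_{11}=\begin{pmatrix}1&1\\0&0\end{pmatrix},\quad M_{12}=\begin{pmatrix}1&0\\1&0\end{pmatrix},\quad M_{21}=\begin{pmatrix}0&1\\0&-1\end{pmatrix},\quad M_{22}=\begin{pmatrix}0&0\\-1&1\end{pmatrix}.$$
Each $\sigma_{cd}(x_m)$ already lies in $A_1$, so every composite occurring above is evaluated by applying the matrices once more. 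The one point to watch is that, because the $M_{ij}$ act by left multiplication on the column of \emph{basis} vectors, a composite $\sigma_{ab}\circ\sigma_{cd}$ restricts on $A_1$ to $h^2\,M_{cd}M_{ab}$ (the inner, first-applied map $\sigma_{cd}$ contributing the left factor). With this bookkeeping the three relations collapse to the finite matrix identities
$$M_{11}M_{21}+M_{21}M_{11}=0,\qquad M_{12}M_{22}+M_{22}M_{12}=0,$$
$$M_{12}M_{21}=M_{11}M_{22}-M_{22}M_{11}+M_{21}M_{12},$$
which I would confirm by direct $2\times 2$ multiplication: in the first two all four products vanish, and in the third both sides equal $\begin{pmatrix}0&1\\0&1\end{pmatrix}$. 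This establishes \eqref{R3.1}--\eqref{R3.3} on $\{x_1,x_2\}$.

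For the consequence I would apply Proposition \ref{xxprop1.11} to the DE-data $\{P,\sigma,\delta,\tau\}$: here $\sigma$ is an algebra homomorphism by Lemma \ref{xxlem4.4}, $\delta=0$ is trivially a $\sigma$-derivation, and the relations \eqref{R3.1}--\eqref{R3.3} hold on the generating set $\{x_1,x_2\}$ by the previous step. The proposition produces a right double extension $C$ generated by $A$ and $y_1,y_2$ subject to \eqref{R1} and \eqref{R2} on $x_1,x_2$. It remains to identify $C$ with $B(h)$: the relation \eqref{R1} reads $y_2y_1=-y_1y_2$, while expanding $\sigma(x_1)\begin{pmatrix}y_1\\y_2\end{pmatrix}$ and $\sigma(x_2)\begin{pmatrix}y_1\\y_2\end{pmatrix}$ via \eqref{R2} reproduces the four mixed relations of \eqref{E4.2.1} (for instance $y_1x_1=h(x_1y_1+x_2y_1+x_1y_2)$ and $y_2x_1=hx_2y_1$ from the first column). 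Hence $C=B(h)$, and $B(h)$ is a right double extension of $A$. The only genuinely delicate point in the whole argument is keeping the composition order straight when passing to matrices; once that is fixed, everything reduces to a couple of $2\times 2$ products and a comparison of relations.
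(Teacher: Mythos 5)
Your proposal is correct and follows essentially the same route as the paper: with $\delta=0$ and $\tau=0$ only \eqref{R3.1}--\eqref{R3.3} need checking, and on the degree-one generators these reduce to the $2\times 2$ matrix identities you state (with the composition order handled exactly as in the paper's displayed computations), after which Proposition \ref{xxprop1.11} yields the conclusion. Your explicit identification of the algebra produced by Proposition \ref{xxprop1.11} with $B(h)$ via the relations \eqref{E4.2.1} is a point the paper passes over more quickly, but the argument is the same.
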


\begin{proof} Since $\delta=0$ and $\tau=(0,0,0)$,
only the first three relations in (R3) are non-trivial, namely,
$$\begin{aligned}
\sigma_{21}\circ \sigma_{11}&=-\sigma_{11}\circ \sigma_{21}\\
\sigma_{21}\circ \sigma_{12}-\sigma_{22}\circ \sigma_{11}
&=-\sigma_{11}\circ\sigma_{22}+\sigma_{12}\circ\sigma_{21}\\
\sigma_{22}\circ \sigma_{12}&=-\sigma_{12}\circ \sigma_{22}
\end{aligned}
$$
We claim that these three relations are satisfied by the 
generating set $\{x_1,x_2\}$. For the first one, we have 
$$\sigma_{21}\circ \sigma_{11}\begin{pmatrix}x_1\\ x_2 \end{pmatrix}
=h^2\begin{pmatrix} 1&1 \\0&0\end{pmatrix}
\begin{pmatrix} 0&1 \\0&-1\end{pmatrix}
\begin{pmatrix}x_1\\ x_2 \end{pmatrix}=
\begin{pmatrix}0\\ 0\end{pmatrix}$$
and
$$-\sigma_{11}\circ \sigma_{21}
\begin{pmatrix}x_1\\ x_2 \end{pmatrix}
=-h^2\begin{pmatrix} 0&1 \\0&-1\end{pmatrix}
\begin{pmatrix} 1&1 \\0&1\end{pmatrix}
\begin{pmatrix}x_1\\ x_2 \end{pmatrix}=
\begin{pmatrix}0\\ 0\end{pmatrix}.$$
Hence
$$\sigma_{21}\circ \sigma_{11}
\begin{pmatrix}x_1\\ x_2 \end{pmatrix}=-\sigma_{11}\circ \sigma_{21}
\begin{pmatrix}x_1\\ x_2 \end{pmatrix}.$$
Similarly, for the third one, one has
$$\sigma_{22}\circ \sigma_{12}\begin{pmatrix}x_1\\ x_2 \end{pmatrix}
=0=-\sigma_{12}\circ \sigma_{22}
\begin{pmatrix}x_1\\ x_2 \end{pmatrix}.$$
For the second relation, a computation shows that
$$(\sigma_{21}\circ \sigma_{12}-\sigma_{22}\circ \sigma_{11})
\begin{pmatrix}x_1\\ x_2 \end{pmatrix}=
h^2\begin{pmatrix}x_1\\ x_2 \end{pmatrix}
$$
and
$$(-\sigma_{11}\circ\sigma_{22}+\sigma_{12}\circ\sigma_{21})
\begin{pmatrix}x_1\\ x_2 \end{pmatrix}=
h^2\begin{pmatrix}x_1\\ x_2 \end{pmatrix}.$$
Therefore
$$(\sigma_{21}\circ \sigma_{12}-\sigma_{22}\circ \sigma_{11})
\begin{pmatrix}x_1\\ x_2 \end{pmatrix}=
(-\sigma_{11}\circ\sigma_{22}+\sigma_{12}\circ\sigma_{21})
\begin{pmatrix}x_1\\ x_2 \end{pmatrix}.$$
Up to this point we have verified (R3) for the generators. 
The last four relations given in \eqref{E4.2.1} are (R2) for 
the generator. By Lemma \ref{xxlem4.4},
$\sigma$ is an algebra homomorphism and $\delta=0$ is 
trivially a $\sigma$-derivation. Therefore by
Proposition \ref{xxprop1.11}, $B$ is a right double extension. 
\end{proof}

\begin{proposition} 
\label{xxprop4.6}
\begin{enumerate}
\item
The determinant of $\sigma$ is equal to  
$$\det \sigma=\sigma_{11}\circ \sigma_{22}+\sigma_{21}\circ
\sigma_{12}
=\sigma_{22}\circ \sigma_{11}+\sigma_{12}\circ
\sigma_{21}$$ 
which is determined by $\det\sigma(x_1)=h^2 x_2$
and $\det\sigma(x_2)=-h^2 x_1$.
\item
$B(h)$ is a connected graded double extension of
$k_{-1}[x_1,x_2]$.
\item
$\det\sigma\neq \sigma_{22}\circ \sigma_{11}+\sigma_{21}\circ
\sigma_{12}$ and $\det \sigma\neq 
\sigma_{11}\circ \sigma_{22}+\sigma_{12}\circ
\sigma_{21}$.
\end{enumerate}
\end{proposition}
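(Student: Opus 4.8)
The plan is to handle the three parts in order, leaning entirely on the machinery of Section~2 and on the four restricted matrices displayed just before Lemma~\ref{xxlem4.5}. The key observation making everything routine is that each $\sigma_{ij}$ is $k$-linear and every intermediate value $\sigma_{ij}(x_s)$ is a scalar multiple of a generator, so no information about $\sigma$ on higher-degree elements is ever required; all computations stay inside $A_1=kx_1+kx_2$.

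For part (a) I would first specialize the definition of $\det\sigma$ to the DE-data of $B(h)$, where $P=(-1,0)$, i.e.\ $p_{11}=0$ and $p_{12}=-1$. The defining formula collapses to $\det\sigma=\sigma_{22}\circ\sigma_{11}+\sigma_{12}\circ\sigma_{21}$. The second expression $\det\sigma=\sigma_{11}\circ\sigma_{22}+\sigma_{21}\circ\sigma_{12}$ is obtained by substituting $p_{11}=0$, $p_{12}=-1$ into the identity derived from \eqref{R3.2} in the discussion preceding Proposition~\ref{xxprop2.1}. With both forms in hand I compute on generators: since $\sigma_{11}(x_1)=h(x_1+x_2)$, $\sigma_{22}(x_2)=h(-x_1+x_2)$, $\sigma_{21}(x_1)=hx_2$ and $\sigma_{12}(x_2)=hx_1$, one gets $\det\sigma(x_1)=h^2(-x_1+x_2)+h^2x_1=h^2x_2$, and likewise $\det\sigma(x_2)=0-h^2x_1=-h^2x_1$.

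For part (b) I observe that the endomorphism $d=\det\sigma$ with $d(x_1)=h^2x_2$, $d(x_2)=-h^2x_1$ is an algebra endomorphism by Proposition~\ref{xxprop2.1}(a), acting on $A_1$ by an invertible linear map (determinant $h^4\neq0$); its inverse is the endomorphism $x_1\mapsto -h^{-2}x_2$, $x_2\mapsto h^{-2}x_1$, which one checks respects the relation $x_1x_2+x_2x_1=0$, so $d$ is an automorphism and in particular invertible. Since $p_{12}=-1\neq0$ and $B(h)$ is a connected graded right double extension by Lemma~\ref{xxlem4.5}, Proposition~\ref{xxprop2.1}(c) then yields that $\sigma$ is invertible and that $B(h)$ is a double extension.

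For part (c) the two inequalities both reduce to showing that the commutator $\sigma_{21}\circ\sigma_{12}-\sigma_{12}\circ\sigma_{21}$ is nonzero: subtracting each claimed formula from the appropriate expression for $\det\sigma$ in part (a) leaves exactly $\pm(\sigma_{21}\circ\sigma_{12}-\sigma_{12}\circ\sigma_{21})$. Evaluating on $x_1$ gives $\sigma_{21}(\sigma_{12}(x_1))=\sigma_{21}(hx_1)=h^2x_2$ but $\sigma_{12}(\sigma_{21}(x_1))=\sigma_{12}(hx_2)=h^2x_1$, so the commutator sends $x_1$ to $h^2(x_2-x_1)\neq0$. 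The only point requiring care, and the real content of the statement, is this last step: because the component maps $\sigma_{ij}$ are genuinely noncommuting $k$-linear operators (none is an algebra homomorphism, by Lemma~\ref{xxlem4.4}), the order of composition in $\det\sigma$ is not interchangeable. There is no deep obstacle here, only the sign/ordering bookkeeping needed to match the definition in (a) against the $\eqref{R3.2}$-derived identity and to identify the relevant differences in (c) as a single nonzero commutator.
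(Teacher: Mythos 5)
Your proposal is correct and follows essentially the same route as the paper: specialize the defining formula and the \eqref{R3.2}-identity at $P=(-1,0)$, evaluate on the generators using the four matrices preceding Lemma \ref{xxlem4.5}, and deduce (b) from the invertibility of $\det\sigma$ via Proposition \ref{xxprop2.1}(c) and Proposition \ref{xxprop1.13}. The only (minor, and slightly tidier) difference is in part (c), where you reduce both inequalities to the nonvanishing of the single commutator $\sigma_{21}\circ\sigma_{12}-\sigma_{12}\circ\sigma_{21}$ evaluated at $x_1$, whereas the paper computes the full $2\times 2$ matrices of both candidate expressions and compares them with that of $\det\sigma$.
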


\begin{proof} (a) Since $P=(-1,0)$, the determinant
of $\sigma$ is as defined. 
$$
\begin{aligned}
\det\sigma \begin{pmatrix}x_1\\ x_2 \end{pmatrix}
&=(\sigma_{11}\circ \sigma_{22}+\sigma_{21}\circ
\sigma_{12})\begin{pmatrix}x_1\\ x_2 \end{pmatrix}\\
&=h^2(\begin{pmatrix}0&0\\ -1&1 \end{pmatrix}
\begin{pmatrix}1&1\\ 0&0 \end{pmatrix}+
\begin{pmatrix}1&0\\ 1&0 \end{pmatrix}
\begin{pmatrix}0&1\\ 0&-1 \end{pmatrix})
\begin{pmatrix}x_1\\ x_2 \end{pmatrix}\\
&=\begin{pmatrix}0&h^2\\ -h^2&0 \end{pmatrix}
\begin{pmatrix}x_1\\ x_2 \end{pmatrix}.
\end{aligned}
$$
Hence $\det \sigma$ is invertible. By Proposition 
\ref{xxprop2.1}(b), $\sigma$ is invertible. 

(b) By part (a) $\sigma$ is invertible. Since 
$p_{12}=-1\neq 0$ and since $B$ is a connected graded 
right double extension, by Proposition \ref{xxprop1.13}, 
$B$ is a double extension.

(c) This follows by the following computation.
$$
\begin{aligned}
(\sigma_{22}\circ \sigma_{11}+\sigma_{21}\circ
\sigma_{12})\begin{pmatrix}x_1\\ x_2 \end{pmatrix}
&=(\begin{pmatrix}h&h\\ 0&0 \end{pmatrix}
\begin{pmatrix}0&0\\ -h&h \end{pmatrix}+
\begin{pmatrix}h&0\\ h&0 \end{pmatrix}
\begin{pmatrix}0&h\\ 0&-h \end{pmatrix})
\begin{pmatrix}x_1\\ x_2 \end{pmatrix}\\
&=\begin{pmatrix}-h^2&2h^2\\ 0&h^2 \end{pmatrix}
\begin{pmatrix}x_1\\ x_2 \end{pmatrix}
\end{aligned}
$$
and
$$
\begin{aligned}
(\sigma_{11}\circ \sigma_{22}+\sigma_{12}\circ
\sigma_{21})\begin{pmatrix}x_1\\ x_2 \end{pmatrix}
&=(\begin{pmatrix}0&0\\ -h&h \end{pmatrix}
\begin{pmatrix}h&h\\ 0&0 \end{pmatrix}+
\begin{pmatrix}0&h\\ 0&-h \end{pmatrix}
\begin{pmatrix}h&0\\ h&0 \end{pmatrix})
\begin{pmatrix}x_1\\ x_2 \end{pmatrix}\\
&=\begin{pmatrix}h^2&0\\ -2h^2&-h^2 \end{pmatrix}
\begin{pmatrix}x_1\\ x_2 \end{pmatrix}.
\end{aligned}
$$
So neither of $\sigma_{22}\circ \sigma_{11}+\sigma_{21}\circ
\sigma_{12}$ nor $\sigma_{11}\circ \sigma_{22}+\sigma_{12}\circ
\sigma_{21}$ is an algebra homomorphism of $A$, neither is
equal to $\det \sigma$.
\end{proof}

\begin{corollary}
\label{xxcor4.7} 
Let $B=B(h)$ be defined as in Example \ref{xxex4.2}.
\begin{enumerate}
\item
$B$ is strongly noetherian and has enough normal elements.
\item
$B$ is Koszul, Artin-Schelter regular of global dimension
4, generated by 4 elements of degree 1.
\item
$B$ is an Auslander regular and Cohen-Macaulay domain.
\item
The quotient division ring of $B$ is generated by 
two elements.
\end{enumerate}
\end{corollary}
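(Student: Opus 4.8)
The plan is to reduce the whole statement to two inputs that are already available, namely the regularity machinery of Theorem~\ref{xxthm0.2} and the sequel's Theorem~\ref{xxthm0.1}, verifying the hypotheses of the latter one at a time and isolating the domain property as the single genuinely hard point. Part (b) I would dispatch directly. By Proposition~\ref{xxprop4.6}(b), $B(h)$ is a connected graded double extension of $A=k_{-1}[x_1,x_2]$, which is a noetherian Artin--Schelter regular domain of global dimension $2$ generated by two degree-$1$ elements; Theorem~\ref{xxthm0.2} then gives that $B(h)$ is Artin--Schelter regular with $\gldim B(h)=\gldim A+2=4$, and by construction it is generated by the four degree-$1$ elements $x_1,x_2,y_1,y_2$. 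For Koszulity I would invoke the reduction system built in the proof of Proposition~\ref{xxprop1.11}: for $B(h)$ it consists of the single relation of $A$, the relations (R2) on the generators $x_1,x_2$, and (R1), all of which are \emph{quadratic} and all of whose ambiguities are resolvable. Thus $B(h)$ is defined by a quadratic Gr\"obner basis, and the standard criterion (a connected graded algebra presented by a quadratic Gr\"obner basis is Koszul) finishes (b).

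The key observation for (a) and (c) is that, once $B(h)$ is known to be a \emph{domain}, it satisfies every hypothesis of Theorem~\ref{xxthm0.1}: it is an Artin--Schelter regular domain of global dimension $4$, generated by four degree-$1$ elements, and a double extension. Theorem~\ref{xxthm0.1} then yields at once that $B(h)$ is strongly noetherian, Auslander regular and Cohen--Macaulay, which is the bulk of (a) and (c). The remaining assertion of (a), that $B(h)$ has enough normal elements, I would obtain from the explicit structure together with \cite{ZZ}, by exhibiting a normalizing sequence whose successive quotients are again regular; this is routine once the noetherian regularity above is in place.

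Hence the corollary rests on proving that $B(h)$ is a domain, and this is where I expect the real difficulty to lie. The usual shortcuts are unavailable: by Proposition~\ref{xxprop0.5}(c) the algebra $B(h)$ is \emph{not} an iterated Ore extension, so the domain property cannot be read off from a tower of skew-polynomial extensions; and since $B(h)$ is presented by six quadratic relations whose leading terms are $x_2x_1,\ y_1x_1,\ y_1x_2,\ y_2x_1,\ y_2x_2,\ y_2y_1$, the associated graded ring for any monomial order is a monomial algebra in which $y_1\cdot x_1=0$, so no leading-term degeneration produces a domain, while the only weight filtrations compatible with the relations reduce to the bidegree grading, for which $\gr B(h)=B(h)$. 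The plan I would therefore follow is to construct a faithful embedding of $B(h)$ into a division ring: one uses the six relations to describe the passage of elements of $A$ past $y_1,y_2$ through the invertible homomorphism $\sigma$ (invertible by Proposition~\ref{xxprop4.6}), builds a candidate Ore/division ring $E$ generated by the images of $x_1$ and $y_1$, and checks that $B(h)\to E$ is injective by matching against the left free $A$-basis $\{y_1^{n_1}y_2^{n_2}\}$; alternatively one proves the domain property by the Artin--Tate--Van den Bergh point/line-module analysis of the quadratic dual, as done for these families in \cite{ZZ}. The main obstacle is exactly this step: because the off-diagonal entries $\sigma_{12},\sigma_{21}$ mix $y_1$ and $y_2$, one must rule out cancellation among the $\sigma$-twisted leading coefficients, which is what defeats the naive valuation arguments.

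Finally, part (d) is a direct computation inside the quotient division ring $D$ of the noetherian domain $B(h)$ (which exists by Goldie's theorem once (a) and (c) hold). Set $u:=x_1$ and $v:=y_1$, and let $D'\subseteq D$ be the division subring they generate. The relation $x_2x_1=-x_1x_2$ gives $x_2u=-ux_2$; the relation $y_1x_2=hx_1y_2$ gives $y_2=h^{-1}u^{-1}vx_2$; and combining $y_2x_1=hx_2y_1$ with these yields $x_2v=cx_2$, where $c=-h^{-2}u^{-1}vu\in D'$. Substituting into $y_1x_1=h(x_1y_1+x_2y_1+x_1y_2)$ produces the single linear equation
\[(hc+v)\,x_2=vu-huv,\]
whose coefficient $hc+v=v-h^{-1}u^{-1}vu$ is nonzero (otherwise $y_1x_1=hx_1y_1$, forcing $x_2y_1+x_1y_2=0$, impossible since these are distinct basis monomials), hence invertible in $D$. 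Therefore $x_2=(v-h^{-1}u^{-1}vu)^{-1}(vu-huv)\in D'$ and then $y_2\in D'$, so all four generators lie in $D'$ and $D=D'$ is generated by two elements, proving (d).
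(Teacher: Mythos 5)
Parts (b) and (d) of your proposal are essentially sound. For (b) the paper instead deduces Koszulity from the Hilbert series $(1-t)^{-4}$ together with \cite[Theorem 5.11]{Sm}, but your quadratic-Gr\"obner-basis argument (reading the reduction system off the proof of Proposition \ref{xxprop1.11}, all of whose elements are quadratic here) is a legitimate alternative. For (d) your computation with the generating pair $\{x_1,y_1\}$ is correct and runs parallel to the paper's computation with $\{x_1-x_2,y_1\}$, which exploits the cleaner relation \eqref{E4.7.1}.

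The genuine gap is in (a) and (c). You route both parts through Theorem \ref{xxthm0.1}, whose hypotheses include that $B(h)$ is a \emph{domain}, and you then concede that the domain property is where the real difficulty lies and offer only a plan (an embedding into a division ring, or a point-module analysis ``as done in \cite{ZZ}'') rather than a proof. As written, (a) and (c) are therefore not established, and your argument for (d) also presupposes the quotient division ring whose existence you have not secured. The paper avoids this circularity by reversing the logical order: it first exhibits the explicit normal elements $x_1x_2$, $x_1^2+x_2^2$, $y_1y_2$, $y_1^2+y_2^2$ by direct computation with the relations \eqref{E4.2.1} (for instance $y_1(x_1x_2)=(x_1x_2)(-h^2y_1)$, and $y_1(x_1-x_2)^2=(x_1+x_2)^2(h^2y_1)$ with $(x_1\pm x_2)^2=x_1^2+x_2^2$). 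The quotient of $B$ by this normalizing sequence is finite dimensional, so $B$ is strongly noetherian by \cite[Section 4]{ASZ} and has enough normal elements in the sense of \cite{Zh}; then \cite[Theorem 1]{Zh}, applied to the AS-Gorenstein algebra obtained in part (b), yields Auslander regularity and the Cohen--Macaulay property, from which the domain property is a \emph{consequence} rather than a hypothesis. The normalizing sequence you dismiss as routine is precisely the nontrivial content that must be supplied: without those four normal elements (or an independent proof that $B(h)$ is a domain) the argument does not close.
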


\begin{proof} 
(a) We claim that $x_1x_2$ and $x_1^2+x_2^2$ are normal
elements of $B$. Since $x_1$ and $x_2$ are skew-commutative,
$x_1x_2$ is skew-commuting with $x_1$ and $x_2$. Similarly,
$x_1^2+x_2^2$ are commuting with $x_1$ and $x_2$. So we only 
need to check that these are (skew-)commuting with $y_1,y_2$. 
Use the relations in \eqref{E4.2.1}, 
we have
$$y_1(x_1x_2)=y_1(-x_2x_1)=-hx_1y_2x_1=-h^2x_1x_2y_1=(x_1x_2)(-h^2y_1)$$
and
$$y_2(x_1x_2)=hx_2y_1x_2=h^2x_2x_1y_2=-h^2x_1x_2y_2=(x_1x_2)(-h^2y_2).$$
Therefore $x_1x_2$ is normal. By using the third and 
fourth relations in \eqref{E4.2.1}, we have
\begin{equation}
\label{E4.7.1}
y_1(x_1-x_2)=h(x_1+x_2)y_1
\tag{E4.7.1}
\end{equation}
which implies that
$$y_1(x_1-x_2)^2=(x_1+x_2)^2(h^2y_1).$$
Similarly, we have
$$y_2(x_1+x_2)=h(-x_1+x_2)y_2,\quad
{\text{and}}\quad
y_2(x_1+x_2)^2=(x_1-x_2)^2(h^2y_2).$$
Since $(x_1-x_2)^2=(x_1+x_2)^2=x_1^2+x_2^2$, it is a normal
element of $B$.
By symmetry, $y_1y_2$ and $y_1^2+y_2^2$ are normal elements too. 
The factor ring
$B/(x_1x_2,x_1^2+x_2^2, y_1y_2,y_1^2+y_2^2)$
is a finite dimensional algebra. Every finite
dimensional algebra is strongly (and universally) noetherian. 
By \cite[Section 4]{ASZ}, $B$ is strongly (and universally) 
noetherian. Also $B$ has enough normal elements in the 
sense of \cite[p.392]{Zh}. 

(b) First of all $B$ is generated by $x_1,x_2,y_1,
y_2$ of degree 1. By Proposition \ref{xxprop4.6}(b), $B$ is
a connected graded double extension of an Artin-Schelter
regular algebra $A$ of global dimension 2. By Theorem 
\ref{xxthm3.3}, $B$ is Artin-Schelter regular of 4. 
By \eqref{E1.13.2} the Hilbert series of $B$ is
equal to $(1-t)^{-4}$. It follows from \cite[Theorem 5.11]{Sm}
that $B$ is Koszul.

(c) This follows from part (b) and \cite[Theorem 1]{Zh}.

(d) We claim that the quotient division ring $Q$ of $B$
is generated by $y_1$ and $x_1-x_2$. Let $D$ be the division
subalgebra of $Q$ generated by $y_1$ and $x_1-x_2$. It follows
from \eqref{E4.7.1} that $x_1+x_2\in D$. Hence both $x_1$
and $x_2$ are in $D$. The fourth quadratic relation of
$B$ (see Example \ref{xxex4.2}) implies that $y_2\in D$.
Therefore $D=Q$.
\end{proof}

\begin{corollary}
\label{xxcor4.8} 
Let $B=B(h)$. The following are equivalent:
\begin{enumerate}
\item[(i)]
$B$ is PI.
\item[(ii)]
$h$ is a root of unity.
\item[(iii)]
$\det\sigma$ is of finite order.
\end{enumerate}
\end{corollary}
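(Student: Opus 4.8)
The plan is to prove the two equivalences $(ii)\Leftrightarrow(iii)$ and $(i)\Leftrightarrow(ii)$ separately, handling the PI condition in one direction through an explicit quantum-plane subalgebra and in the other through a finiteness-over-the-center argument. First I would dispose of $(ii)\Leftrightarrow(iii)$ by linear algebra. By Proposition \ref{xxprop4.6}(a), $\det\sigma$ is the graded algebra automorphism of $A=k_{-1}[x_1,x_2]$ acting on the degree-one space $V=kx_1+kx_2$ by the matrix $\begin{pmatrix}0&h^2\\-h^2&0\end{pmatrix}$. Since $\det\sigma$ preserves $V$ and is determined by its restriction there, its order as an automorphism equals the order of this $2\times2$ matrix. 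The matrix has distinct eigenvalues $\pm ih^2$ (using $h\neq0$), hence is diagonalizable, so it has finite order if and only if $ih^2$ is a root of unity, i.e.\ if and only if $h^2$, equivalently $h$, is a root of unity. This is exactly $(ii)\Leftrightarrow(iii)$.

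For $(i)\Rightarrow(ii)$ I would argue by contraposition: assuming $h$ is not a root of unity, I exhibit a subalgebra of $B$ that fails to be PI. Set $z=x_1^2+x_2^2$. From $(x_1-x_2)^2=(x_1+x_2)^2=z$ together with the relation $y_1(x_1-x_2)=h(x_1+x_2)y_1$ of \eqref{E4.7.1} one computes $y_1z=h^2zy_1$. Because $B$ is a free left $A$-module on $\{y_1^{n_1}y_2^{n_2}\}$ and the powers $z^i$ are linearly independent in the domain $A$ (they lie in distinct degrees), the monomials $z^iy_1^j$ are linearly independent in $B$; hence $z$ and $y_1$ generate a copy of the quantum plane $k\langle z,y_1\rangle/(y_1z-h^2zy_1)$. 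When $h$ is not a root of unity neither is $h^2$, so this quantum plane is not PI, and therefore neither is $B$, since any subalgebra of a PI algebra is PI.

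For $(ii)\Rightarrow(i)$ I would show that $B$ is module-finite over its center. By the proof of Corollary \ref{xxcor4.7}(a) the elements $x_1x_2$, $z=x_1^2+x_2^2$, $y_1y_2$ and $z'=y_1^2+y_2^2$ are normal, and the quotient $B/(x_1x_2,z,y_1y_2,z')$ is finite dimensional. Each of these normal elements conjugates the generators by scalars that are signs times powers of $h^2$; so when $h$ is a root of unity, all four conjugation automorphisms have finite order, and consequently some power $t^m$ of each normal element $t$ is central. Then $T=k[(x_1x_2)^m,z^m,(y_1y_2)^m,(z')^m]$ is a commutative central subalgebra, and by a degree induction (graded Nakayama) finitely many monomials representing a basis of the finite-dimensional quotient generate $B$ as a module over the subalgebra spanned by the normal elements, which is in turn finite over $T$. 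Hence $B$ is finite over its center and therefore PI. The main obstacle is precisely this last step: one must check that the conjugation automorphisms attached to all four normal elements are simultaneously of finite order and that the resulting central subalgebra is large enough for $B$ to be module-finite over it, whereas the equivalence $(ii)\Leftrightarrow(iii)$ and the quantum-plane construction are comparatively routine.
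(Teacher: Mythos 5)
Your proposal is correct and follows essentially the same route as the paper: (ii)$\Leftrightarrow$(iii) read off from the matrix of $\det\sigma$ in Proposition \ref{xxprop4.6}(a); non-PI when $h$ is not a root of unity via a quantum-plane subalgebra on $y_1$ and a normal quadratic element (the paper uses $x_1x_2$ with parameter $-h^2$, you use $x_1^2+x_2^2$ with parameter $h^2$ --- an immaterial variation); and PI when $h$ is a root of unity by showing $B$ is module-finite over the central subalgebra generated by powers of the four normal elements, using that the quotient by them is finite dimensional. No gaps worth flagging.
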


\begin{proof}
(ii) $\Leftrightarrow$ (iii) Follows from Proposition
\ref{xxprop4.6}(a).

(i) $\Leftrightarrow$ (ii)
Suppose $h$ is a root of unity, say $l$th root of unity.
Then by the computation in the proof of Corollary \ref{xxcor4.7}(a), 
$(x_1x_2)^{2l}$, $(x_1^2+x_2^2)^l$, $(y_1y_2)^{2l}$ and 
$(y_1^2+y_2)^l$ are central elements of $B$. The factor
ring $B/((x_1x_2)^{2l}, (x_1^2+x_2^2)^l, (y_1y_2)^{2l},
(y_1^2+y_2)^l)$ is finite dimensional over $k$. Hence $B$ is PI.

If $h$ is not a root of unity, then $B$ contains a subalgebra
generated by $y_1$ and $(x_1x_2)$ with a relation $y_1(x_1x_2)=-h^2
(x_1x_2)y_1$, which is not PI. So $B$ is not PI.
\end{proof}

There are many examples of regular algebras of dimension
4. If $A$ is a regular algebra of dimension 3, then any Ore 
extension $A[y;\sigma,\delta]$ (with invertible $\sigma$) 
is a regular algebra of dimension 4.  Ore extensions such 
as $A[y;\sigma,\delta]$ are well-studied. A connected 
graded algebra $A$ is called a {\it normal extension} of a 
regular algebra fo dimension 3, if there is a normalizing 
non-zero-divisor $x$ of degree  1 such that $A/(x)$ is 
regular. Normal extension of regular algebras of dimension 
3 are regular of dimension 4 and studied by Le Bruyn, Smith 
and Van den Bergh \cite{LSV}. So we assume that normal 
extensions are well-understood. Next we show that $B(h)$ 
is neither an Ore extension nor a normal extension.

\begin{lemma}
\label{xxlem4.9} The algebra $B(h)$ has no normal element 
in degree 1. As a consequence, $B(h)$ is not a normal extension
of an Artin-Schelter regular algebra of global 
dimension 3.
\end{lemma}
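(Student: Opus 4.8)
The plan is to show directly that no nonzero element of $B_1=kx_1+kx_2+ky_1+ky_2$ is normal, and then read off the consequence. Write $V=B_1$ and recall that, as a double extension, $B(h)$ has the PBW basis $\{x_1^{n_1}x_2^{n_2}y_1^{m_1}y_2^{m_2}\}$; in particular $B_2$ is $10$-dimensional with basis $\{x_1^2,x_1x_2,x_2^2,\,x_1y_1,x_2y_1,x_1y_2,x_2y_2,\,y_1^2,y_1y_2,y_2^2\}$, and the relations \eqref{E4.2.1} let one rewrite every degree-$2$ product in this basis. If $z\in V$ is normal, then $zB=Bz$, so comparing degree-$2$ components gives $zV\subseteq Vz$; hence to disprove normality of a given $z$ it suffices to exhibit a single $v\in V$ with $zv\notin Vz$.

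First I would cut down the candidates. Both $(x_1,x_2)$ and $(y_1,y_2)$ generate two-sided ideals, with $B/(y_1,y_2)\cong k_{-1}[x_1,x_2]$ and $B/(x_1,x_2)\cong k_{-1}[y_1,y_2]$ (only the skew-commutation survives), and the image of a normal element under an algebra surjection is again normal. Writing $z=u+w$ with $u\in kx_1+kx_2$ and $w\in ky_1+ky_2$, the images $u$ and $w$ must therefore be normal (or zero) in the respective quantum planes. A short coefficient computation in $k_{-1}[x_1,x_2]$ shows that, when $\ch k\neq2$, the only degree-$1$ normal elements are the scalar multiples of $x_1$ and of $x_2$; so $u$ lies on a coordinate axis, and likewise $w$. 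Up to scalar, $z$ then has one of the shapes $x_1$, $x_1+cy_1$, or $x_1+cy_2$ with $c\neq0$; the automorphism $f$ and anti-automorphism $g$ of Lemma \ref{xxlem4.3} permute normal elements and carry these onto the remaining shapes (the $x_2$- and pure-$y$ cases and the mixed combinations $x_i+cy_j$, $i\neq j$), so only the three displayed families need checking.

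For each surviving shape I would compute $Vz=\langle x_1z,x_2z,y_1z,y_2z\rangle$ and test one further product against the basis of $B_2$. For $z=x_1$ one has $x_1y_1\notin Vx_1$, since the $x_iy_j$-part of $Vx_1$ is spanned by $x_1y_1+x_2y_1+x_1y_2$ and $x_2y_1$, which does not contain $x_1y_1$. For $z=x_1+cy_1$, consider $zy_2=x_1y_2+cy_1y_2$: matching $y_1y_2$ fixes the coefficient of $y_2z$, matching $x_1y_2$ fixes that of $y_1z$, cancelling the resulting $x_1y_1$ then forces a nonzero multiple of $x_1z$, and this leaves an uncancellable $x_1^2$. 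For $z=x_1+cy_2$, the product $zy_2=x_1y_2+cy_2^2$ fixes the coefficients of $y_2z$ and $y_1z$ and leaves an uncancellable $y_1y_2$. In each case $zv\notin Vz$, so $z$ is not normal, and $B(h)$ has no nonzero normal element in degree $1$.

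The main obstacle is genuinely the characteristic: the quotient-plane reduction collapses when $\ch k=2$, where $k_{-1}[x_1,x_2]$ is the commutative polynomial ring and every degree-$1$ element is normal. Indeed, for $\ch k=2$ one checks that $x_1+x_2$ commutes with $x_1,x_2$ and satisfies $y_i(x_1+x_2)=h(x_1+x_2)y_i$, so it is normal; thus the assumption $\ch k\neq2$ is essential and is what I would make explicit. With that in force, the only remaining difficulty is the bookkeeping of the one-parameter families, handled by the coefficient-chasing above. Finally, for the consequence: by definition a normal extension of an Artin--Schelter regular algebra of global dimension $3$ possesses a normalizing non-zero-divisor of degree $1$ whose quotient is regular of dimension $3$; since $B(h)$ is generated in degree $1$, such an element would be a nonzero normal element of $B_1$, contradicting what was just proved. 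Hence $B(h)$ is not a normal extension of a dimension-$3$ regular algebra.
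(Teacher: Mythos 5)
Your proof is correct and follows essentially the same strategy as the paper's: reduce the candidate normal element $z$ using the symmetries of Lemma \ref{xxlem4.3} and the quotient maps onto the quantum planes, then rule out the surviving families by comparing coefficients in $B_2$ against the free left $A$-module basis. The paper uses only the quotient $B/(y_1,y_2)$ and tests the single product $x_1z$ against $zx_1$, arriving at the equations $c=ch$, $ch+dh=0$, $d=ch$; your additional use of $B/(x_1,x_2)$ merely splits the same computation into cleaner one-parameter cases, each killed by a test product $zy_2$. Your characteristic-$2$ caveat is a genuine catch that applies equally to the paper's own argument: the step ``$x_1+bx_2$ is normal in $k_{-1}[x_1,x_2]$ only if $b=0$'' and the final deduction ``so $c=d=0$'' (which amounts to $2c=0$) both fail when $\ch k=2$, and your verification that $x_1+x_2$ is then normal in $B(h)$ shows the statement itself is false in that case. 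Since Section \ref{xxsec1} assumes only that $k$ is algebraically closed, the hypothesis $\ch k\neq 2$ should indeed be made explicit in the lemma.
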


\begin{proof} Let $z=ax_1+bx_2+cy_1+dy_2$ be a nonzero
normal element of $B:=B(h)$. Applying Lemma \ref{xxlem4.3} we 
may assume that $a\neq 0$, so after a scalar change we
may assume that $z=x_1+bx_2+cy_1+dy_2$. Since $z$ is not
in the ideal generated by $y_1$ and $y_2$, $z$ remains nonzero
and normal in $B/(y_1,y_2)$. Since $B/(y_1,y_2)\cong
k_{-1}[x_1,x_2]$, it is easy to see that $x_1+bx_2$ is normal
if and only if $b=0$. Thus $z=x_1+cy_1+dy_2$. Also it
is easy to check that $x_1$ in not normal in $B$. This
implies that $cy_1+dy_2\neq 0$. Since $x_1+cy_1+dy_2$
is normal,
$$x_1(x_1+cy_1+dy_2)=(x_1+cy_1+dy_2)(ex_1+fx_2+gy_1+hy_2).$$
Since $x_1x_1=x_1x_1$ in $B/(y_1,y_2)$, $e=1$ and $f=0$.
Modulo $x_1,x_2$, we have 
$$0=(cy_1+dy_2)(gy_1+hy_2)$$
which implies that $gy_1+hy_2=0$ because $cy_1+dy_2$
is not zero in the domain $B/(x_1,x_2)$. Thus
$$
\begin{aligned}
x_1(x_1+cy_1+dy_2)&=(x_1+cy_1+dy_2)x_1\\
&=x_1^2+ch(x_1y_1+x_2y_1+x_1y_2)+dhx_2y_1
\end{aligned}
$$
which implies that
$$c=ch,ch+dh=0,d=ch.$$
So $c=d=0$, a contradiction. Therefore $B$ has no
normal element in degree 1.

The consequence is clear.
\end{proof}

\begin{lemma}
\label{xxlem4.10} 
The algebra $B(h)$ does not contain a graded subalgebra
that has Hilbert series $(1-t)^{-3}$. As a consequence,
$B(h)$ is not an Ore extension of an Artin-Schelter
regular algebra generated by three elements in degree 1.
\end{lemma}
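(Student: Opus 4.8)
The plan is to reduce the statement to a bigraded dimension count plus one small rank computation. Suppose for contradiction that $S\subseteq B(h)$ is a graded subalgebra with $H_S(t)=(1-t)^{-3}$; then $\dim_k S_1=3$ and $\dim_k S_2=6$. Setting $V=S_1\subseteq B(h)_1$ and noting $V\cdot V\subseteq S_2$, it suffices to prove that $\dim_k(V\cdot V)\ge 7$ for every $3$-dimensional subspace $V\subseteq B(h)_1$, which contradicts $\dim_k S_2=6$.

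To carry this out I would exploit the $\mathbb{Z}^2$-grading. Write $X=kx_1+kx_2$ and $Y=ky_1+ky_2$, so $B(h)_1=X\oplus Y$; all six relations in \eqref{E4.2.1} are bihomogeneous, so $B(h)_2=(B_2)_{(2,0)}\oplus(B_2)_{(1,1)}\oplus(B_2)_{(0,2)}$ with summands of dimension $3,4,3$, and reduction to normal form preserves bidegree. Writing $v=v_X+v_Y$, the bidegree-$(2,0)$ part of a product $vw$ is $v_Xw_X$, and so on. A dimension count forces $\dim(V\cap X)\ge 1$ and $\dim(V\cap Y)\ge 1$, while $X,Y\subseteq V$ is impossible; so $(\dim(V\cap X),\dim(V\cap Y))\in\{(2,1),(1,2),(1,1)\}$. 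In the case $(1,1)$ both projections $V\to X$ and $V\to Y$ are onto, so the $(2,0)$- and $(0,2)$-components of $V\cdot V$ are the full spaces $X\cdot X$ and $Y\cdot Y$ ($3$ dimensions each); taking $\xi$ spanning $V\cap X$ and a mixed vector $w=x'+y'\in V$ with $y'\ne 0$, the $(1,1)$-part of $\xi w$ is $\xi y'\ne 0$, so $\dim_k(V\cdot V)\ge 3+1+3=7$.

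The case $X\subseteq V$ is where the real work lies. Here $V=X+k\bar y$ with $\bar y=\alpha y_1+\beta y_2\in Y$; the $(2,0)$-component is full ($3$ dimensions), the $(0,2)$-component is $k\bar y^2=k(\alpha^2y_1^2+\beta^2y_2^2)$ ($1$ dimension), and it remains to show the $(1,1)$-component, spanned by $x_1\bar y,x_2\bar y,\bar y x_1,\bar y x_2$, has dimension $\ge 3$ for all $(\alpha,\beta)\ne(0,0)$. Reducing $\bar y x_i$ by \eqref{E4.2.1} and forming the $4\times4$ coefficient matrix in the basis $x_1y_1,x_1y_2,x_2y_1,x_2y_2$, I would show its rank is always at least $3$: a representative $3\times3$ minor is a nonzero scalar multiple of $\alpha^2(\alpha-\beta)$, and the two exceptional loci $\alpha=0$ and $\alpha=\beta$ are checked by hand. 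This gives $\dim_k(V\cdot V)\ge 7$, and the symmetric case $Y\subseteq V$ follows from the anti-automorphism $g$ of Lemma \ref{xxlem4.3}(b), which swaps $X$ and $Y$ and preserves $\dim_k(V\cdot V)$.

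For the consequence, if $B(h)$ were an Ore extension $R[y;\sigma,\delta]$ of an Artin--Schelter regular algebra $R$ generated by three degree-$1$ elements, then $R$ would embed as a graded subalgebra; such an $R$ has minimal free resolution $0\to R(-3)\to R(-2)^{\oplus3}\to R(-1)^{\oplus3}\to R\to k\to 0$ and hence Hilbert series $(1-t)^{-3}$, contradicting the main assertion. The only genuine obstacle is the rank-$\ge 3$ computation in the case $X\subseteq V$; the other cases are handled either by the clean bigraded count or by the symmetry $g$.
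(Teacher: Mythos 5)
Your overall strategy is the same as the paper's: show that $\dim_k(V\cdot V)\ge 7$ for every $3$-dimensional subspace $V\subseteq B(h)_1$, contradicting $\dim_k S_2=6$. Your treatment of the case $X\subseteq V$ is sound --- the $4\times 4$ coefficient matrix of $x_1\bar y,x_2\bar y,\bar yx_1,\bar yx_2$ has determinant $-2h^2\alpha\beta(\alpha-\beta)(\alpha+\beta)$, your $3\times 3$ minor proportional to $\alpha^2(\alpha-\beta)$ is correct, and the rank is indeed at least $3$ on all degenerate loci --- and the reduction of $Y\subseteq V$ to this case via $g$ is fine. The gap is in the case $(\dim(V\cap X),\dim(V\cap Y))=(1,1)$, which is the \emph{generic} case, not the easy one. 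Your count $3+1+3$ adds up dimensions of the projections of $W=V\cdot V$ onto the three bigraded pieces of $B_2$; but the inequality $\dim_k W\ge \sum_i \dim_k \pi_i(W)$ is false for a subspace that is not itself bihomogeneous, and $V\cdot V$ is not bihomogeneous when $V$ is not. (In $U_1\oplus U_2$ the span of $(u_1,v_1)$ and $(u_2,v_2)$, with $\{u_i\}$ and $\{v_i\}$ bases, has both projections of dimension $2$ but is only $2$-dimensional.) If instead by ``component'' you mean the intersection $W\cap (B_2)_{(i,j)}$, then the assertion that these intersections equal $X\cdot X$ and $Y\cdot Y$ does not follow from surjectivity of $V\to X$ and $V\to Y$: writing $V=k\xi\oplus k\eta\oplus kw$ with $w=x'+y'$, the only bihomogeneous products available for free are $\xi^2,\eta^2,\xi\eta,\eta\xi$, while $\xi w$, $w\xi$, $w^2$ --- which must supply the rest of $X\cdot X$ --- all carry $(1,1)$- or $(0,2)$-tails that you have not shown how to strip off.

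The conclusion in the $(1,1)$ case is true, but establishing it requires a computation of the same kind as your $X\subseteq V$ case (a rank bound on the nine products rather than on a single $4\times 4$ matrix), so your closing remark that the rank computation for $X\subseteq V$ is ``the only genuine obstacle'' has the distribution of work exactly backwards. The paper sidesteps the case division altogether: using the symmetries of Lemma \ref{xxlem4.3} it normalizes $V$ to the span of $z_1=x_1+ay_2$, $z_2=x_2+by_2$, $z_3=y_1+cy_2$ (a form covering your $(1,1)$ and $(2,1)$ cases simultaneously) and checks directly that the seven products $z_1^2,z_1z_2,z_2^2,z_1z_3,z_2z_3,z_3z_1,z_3^2$ are linearly independent by inspecting lowest terms for the order $x_1<x_2<y_1<y_2$. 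Either adopt that normal form, or redo your $(1,1)$ case by an explicit computation with the nine products; the bigraded projection count alone does not close it.
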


\begin{proof} Let $A$ be the subalgebra generated by
$z_1,z_2,z_3\in B_1$. By Lemma \ref{xxlem4.3}, we may 
assume $y_2$ is not in the subspace $kz_1+kz_2+kz_3$. 
This means that, up to a linear transformation, 
$z_1=x_1+ay_2,z_2=x_2+by_2,z_3=y_1+cy_2$. 
We order $x_1<x_2<y_1<y_2$ and use lexgraphic order for
monomials of higher degrees.  Let us compute 
some elements in the degree two part of $A$ as follows.
$$
\begin{aligned}
z_1^2&=x_1^2+ax_1y_2+ay_2x_1+a^2y_2^2=x_1^2+ax_1y_2+ax_2y_1+a^2y_2^2\\
z_1z_2&=x_1x_2+bx_1y_2+ay_2x_2+aby_2^2=x_1x_2+{\text{higher terms}} \\
z_2^2&=x_2^2+{\text{higher terms}}\\
z_1z_3&=x_1y_1+{\text{higher terms}}\\
z_2z_3&=x_2y_1+{\text{higher terms}}\\
z_3z_1&=(y_1+cy_2)x_1+{\text{higher terms}}
=hx_1y_1+hx_1y_2+{\text{higher terms}}\\
z_3^2&=(y_1+cy_2)^2=y_1^2+{\text{higher terms}}
\end{aligned}
$$
Hence these 7 elements are linearly independent and
hence $H_A(t)\neq (1-t)^{-3}$.

The consequence is clear. 
\end{proof}

Combining the above statements we have Proposition \ref{xxprop0.5}.

\begin{proof}[Proof of Proposition \ref{xxprop0.5}]
Part (a) is Proposition \ref{xxprop4.6}(b). Parts (b) and (e)
are proved in Corollary \ref{xxcor4.7}. Part (c) follows  from
Lemmas \ref{xxlem4.9} and \ref{xxlem4.10}. Finally part (d) is 
Corollary \ref{xxcor4.8}.
\end{proof}

This is the end of Example \ref{xxex4.2}. 

There are many basic questions about double extensions
and here are some obvious ones. Some of 
which might be really easy, but haven't been solved at 
the time when this paper was written. 

\begin{question}
\label{xxque4.11} 
Let $R$ be a double extension of $A$.
\begin{enumerate}
\item
If $A$ is a domain, is then $R$ a domain?
\item
If $A$ is prime (or semiprime), is then $R$ 
prime (or semiprime)?
\end{enumerate}
\end{question}

\begin{question}
\label{xxque4.12} 
Let $R$ be a double extension of $A$. If $A$ is (right) 
noetherian, is then $R$ (right) noetherian?
\end{question}

If $R$ is an Artin-Schelter regular algebra of global dimension
dimension 4, then Question \ref{xxque4.12} has an affirmative
answer by the classification in \cite{ZZ}.

\begin{question}
\label{xxque4.13} 
Let $R$ be a double extension of $A$. 
Does the Auslander (respectively, Cohen-Macaulay) property 
pass from $A$ to $R$?
\end{question}

\begin{question}
\label{xxque4.14} 
Let $R$ be a double extension of $A$. 
If $A$ has finite (right) Krull dimension, does then
$R$ have finite (right) Krull dimension?
\end{question}

\section*{Acknowledgments}
James J. Zhang is supported by the US National Science Foundation
and the Royalty Research Fund of the University of Washington.

\providecommand{\bysame}{\leavevmode\hbox
to3em{\hrulefill}\thinspace}
\providecommand{\MR}{\relax\ifhmode\unskip\space\fi MR
}
\providecommand{\MRhref}[2]{%

\href{http://www.ams.org/mathscinet-getitem?mr=#1}{#2}
}
\providecommand{\href}[2]{#2}


\begin{thebibliography}{10}

\bibitem[AS]{AS}
M. Artin and W.F. Schelter, 
\emph{Graded algebras of global dimension $3$}, 
Adv. in Math. \textbf{66} (1987), no. 2, 171--216.

\bibitem[ASZ]{ASZ}
M. Artin, L.W. Small and J.J. Zhang, 
\emph{Generic flatness for strongly Noetherian
algebras},
J. Algebra \textbf{221} (1999), no. 2, 579--610.

\bibitem[ATV1]{ATV1}
M. Artin, J. Tate and M. Van den Bergh,
\emph{Some algebras associated to automorphisms of
elliptic curves},
The Grothendieck Festschrift, Vol. I, 33--85, Progr.
Math., \textbf{86},
Birkh{\" a}user Boston, Boston, MA, 1990.

\bibitem[ATV2]{ATV2}
M. Artin, J. Tate and M. Van den Bergh,
\emph{Modules over regular algebras of dimension $3$}, 
Invent. Math. \textbf{106} (1991), no. 2, 335--388. 

\bibitem[Be]{Be}
G.M. Bergman, 
\emph{The diamond lemma for ring theory}, 
Adv. in Math. \textbf{29} (1978), no. 2, 178--218.

\bibitem[LSV]{LSV}
L. Le Bruyn, S.P. Smith and M. Van den Bergh,
\emph{Central extensions of three-dimensional
Artin-Schelter regular
algebras}, Math. Z. \textbf{222} (1996), no. 2,
171--212.

\bibitem[LPWZ]{LPWZ}
D.-M. Lu, J.H. Palmieri, Q.-S. Wu and J.J. Zhang, 
\emph{Regular algebras of dimension 4 and their 
$A_\infty-\Ext$-algebras}, Duke Math. J. 
\textbf{137} (2007), no. 3, 537--584.

\bibitem[MR]{MR}
J.C. McConnell and J.C. Robson, 
\emph{Noncommutative Noetherian Rings,} Wiley, Chichester, 1987.

\bibitem[Ny]{Ny}
A. Nyman, 
\emph{Grassmannians of two-sided vector spaces}, 
Comm. Algebra \textbf{35} (2007), no. 7, 2208--2234


\bibitem[Or]{Or}
O. Ore, 
\emph{Theory of non-commutative polynomials}, 
Ann. of Math. (2) \textbf{34} (1933), no. 3, 480--508.

\bibitem[Pa]{Pa}
D. Patrick, 
\emph{Quantum ruled surfaces}, Ph.D. thesis, M.I.T., 1997.

\bibitem[Sk1]{Sk1}
E.K. Sklyanin, \emph{Some algebraic structures connected with the 
Yang-Baxter equation} (Russian), Funktsional. Anal. i Prilozhen
\textbf{16} (1982), no. 4, 27--34.

\bibitem[Sk2]{Sk2}
E.K. Sklyanin, \emph{Some algebraic structures connected with the
Yang-Baxter equation. Representations of a quantum algebra}, (Russian) 
Funktsional. Anal. i Prilozhen \textbf{17} (1983), no. 4, 34--48.

\bibitem[Sm]{Sm}
S.P. Smith, 
\emph{Some finite-dimensional algebras related to
elliptic curves}, 
Representation theory of algebras and related topics
(Mexico City, 1994), 
315--348, CMS Conf. Proc., 19, Amer. Math. Soc.,
Providence, RI, 1996. 


\bibitem[SS]{SS} 
S.P. Smith and J.T. Stafford, 
\emph{Regularity of the four-dimensional Sklyanin
algebra,}
Compositio Math. \textbf{83} (1992), no. 3, 259--289. 

\bibitem[SZ]{SZ}
 D.R. Stephenson and J.J. Zhang,
 \emph{Growth of graded Noetherian rings}, 
 Proc. Amer. Math. Soc. \textbf{125} (1997), no. 6,
1593--1605. 


\bibitem[VV1]{VV1}
K. Van Rompay and M. Vancliff,
\emph{Embedding a Quantum Nonsingular Quadric in a
Quantum ${\mathbb P}^3$,}
Algebra \textbf{195} No. 1 (1997), 93-129.

\bibitem[VV2]{VV2}
K. Van Rompay and M. Vancliff,
\emph{Four-dimensional Regular Algebras with Point
Scheme a Nonsingular Quadric
in ${\mathbb P}^3$}, Comm. Alg. \textbf{28} No. 5
(2000), 2211-2242.

\bibitem[VVW]{VVW}
K. Van Rompay, M. Vancliff and L. Willaert
\emph{Some Quantum ${\mathbb P}^3$s with Finitely Many
Points},
Comm. Alg. \textbf{26} No. 4 (1998), 1193-1208.

\bibitem[Va1]{Va1}
M. Vancliff, 
\emph{Quadratic Algebras Associated with the Union of
a Quadric 
and a Line in ${\mathbb P}^3$}, J. Algebra
\textbf{165} No. 1 (1994),
63-90.

\bibitem[Va2]{Va2}
M. Vancliff, 
\emph{The Defining Relations of Quantum n x n
Matrices}, 
J. London Math. Soc. \textbf{52} No. 2 (1995),
255-262.

\bibitem[YZ]{YZ}
A. Yekutieli and J.J. Zhang,
\emph{Homological transcendence degree}
Proc. London Math. Soc. (3) \textbf{93} 
(2006), no. 1, 105--137.

\bibitem[ZZ]{ZZ}
J.J. Zhang and J. Zhang,
\emph{Double extension regular algebras of type (14641)},
preprint, (2007).

\bibitem[Zh]{Zh}
J.J. Zhang,
\emph{Connected graded Gorenstein algebras with
enough normal 
elements,} J. Algebra \textbf{189} (1997), no. 2,
390--405. 


\end{thebibliography}
\end{document}